\DeclareMathSymbol{\sminus}{\mathbin}{AMSa}{"39}
\newtheorem{thm}[equation]{Theorem}
\newtheorem{cor}[equation]{Corollary}
\newtheorem{lem}[equation]{Lemma}
\theoremstyle{definition}
\newtheorem{rem}[equation]{Remark}
\newtheorem{rems}[equation]{Remarks}
\newtheorem{defn}[equation]{Definition}
\newtheorem{defns}[equation]{Definitions}
\newtheorem{exam}[equation]{Example}
\newtheorem{pgraph}[subsection]{{\!\!}}
\newtheorem{subpgraph}[subsubsection]{{\!\!}}
\newcommand{\nin}{\noindent}
\newcommand{\fn}{\footnote}
\newcommand{\n}{\textbf}
\newcommand{\e}{\emph}
\newcommand{\ov}{\overline}
\newcommand{\seq}{\subseteq}
\newcommand{\ch}{{\rm char}} 
\newcommand{\N}{\mathds{N}}
\newcommand{\Z}{\mathds{Z}}
\newcommand{\Q}{\mathds{Q}}
\newcommand{\Co}{\mathds{C}} 
\newcommand{\Mat}{\text{M}}
\newcommand{\C}{{\mathcal{C}}} 
\newcommand{\sgn}{{\rm sign}}
\newcommand{\Ann}{{\rm Ann}} 
\newcommand{\vp}{\varphi}
\newcommand{\eq}[1]{\begin{align*}#1\end{align*}} 
\newcommand{\eqnum}[1]{\begin{align}#1\end{align}} 
\newcommand{\pma}{\begin{pmatrix}}
\newcommand{\epma}{\end{pmatrix}}
\newcommand{\vma}{\begin{vmatrix}}
\newcommand{\evma}{\end{vmatrix}}
\newcommand{\tr}{{\rm tr}} 
\newcommand{\enum}{\begin{enumerate}[leftmargin=*,itemsep=-0.5ex]}
\newcommand{\enumi}{\begin{enumerate}[{\rm i)},leftmargin=*,itemsep=-0.5ex]}
\newcommand{\enuma}{\begin{enumerate}[{\rm a)},leftmargin=*,itemsep=-0.5ex]}
\newcommand{\eenum}{\end{enumerate}}
\newcommand{\ite}{\begin{itemize}[leftmargin=*,itemsep=-0.5ex]}
\newcommand{\eite}{\end{itemize}}
\begin{document}

\title[Tridiagonal $k$-Toeplitz matrices]{The determinant, spectral properties, and inverse of a tridiagonal $k$-Toeplitz matrix\\over a commutative ring}

\dedicatory{Dedicated to the memory of our friend José Carlos Petronilho}

\author[J. Brox]{Jose Brox$^*$}
\address{University of Coimbra, CMUC, Department of Mathematics, 3004-504 Coimbra, Portugal}
\thanks{This work was partially supported by the Centre for Mathematics of the University of Coimbra - UIDB/00324/2020, funded by the Portuguese Government through FCT/MCTES. The first author was supported by the Portuguese Government through the FCT grant SFRH/BPD/118665/2016 (FCT/Centro 2020/Portugal 2020/ESF)}
\email[*corresponding author]{josebrox@mat.uc.pt}

\author[H. Albuquerque]{Helena Albuquerque}
\address{University of Coimbra, Department of Mathematics, 3004-504 Coimbra, Portugal}
\email{lena@mat.uc.pt}

\keywords{Tridiagonal $k$-Toeplitz matrix, continuant, determinant, characteristic polynomial, eigenvector, inverse, continuant polynomial, generalized Fibonacci polynomial, computational complexity, commutative ring}
\subjclass[2020]{15B05, 47B36, 15A15, 15A18, 15A09, 65F50, 68W30, 68W40, 47B35, 33C45, 42C05, 39A06}

\begin{abstract}
A square matrix is $k$-Toeplitz if its diagonals are periodic sequences of period $k$. We find universal formulas for the determinant, the characteristic polynomial, some eigenvectors, and the entries of the inverse of any tridiagonal $k$-Toeplitz matrix (in particular, of any tridiagonal matrix) over any commutative unital ring, expressed in terms of the elementary operations of the ring. The results are proven using combinatorial identities and elementary linear algebra. We conduct a complexity analysis of algorithms based on our formulas, showing that they are efficient, and we compare our results favourably with those found in the literature. Concretely, the determinant, the characteristic polynomial, and any entry of the inverse of a tridiagonal $k$-Toeplitz matrix of size $n$ can each be found with $O(\displaystyle\log\frac nk+k)$ operations, while an eigenvector can be determined with $O(n+k)$ operations.
\end{abstract}

\maketitle

\newpage

\bigskip
\bigskip
\setcounter{tocdepth}{3}
\large\tableofcontents\normalsize

\newpage
\section{Introduction}

\subsection{Interest of tridiagonal $k$-Toeplitz matrices}
\mbox{}

Given a commutative unital ring $K$, a tridiagonal matrix $T\in\Mat_n(K)$ is $k$-Toeplitz if the entries along the main diagonal of $T$ and its adjacent diagonals are periodic sequences of period $k$, so that it has the form
\[T=\pma
\bm{a_1} & \bm{b_1} & & & & & & & \\
\bm{c_1} & \ddots & \ddots &&&&&&  \\
&\ddots& \bm{a_k} & \bm{b_k} &&&&& \\
 & \ddots & \bm{c_k} & \bm{a_1} & \bm{b_1} &&&& \\
&&& \bm{c_1} & \ddots &\ddots &&& \\
&&&&\ddots& \bm{a_k} & \bm  {b_k} && \\
&&&&& \bm{c_k} & \bm{a_1} & \ddots \\
&&&&&& \ddots & \ddots &
\epma_{n\times n}.\]
$T$ is called reducible if some element $b_i$ or $c_i$ in the adjacent diagonals is $0$, irreducible otherwise. Observe that general tridiagonal matrices can be treated as tridiagonal $k$-Toeplitz matrices by considering $n\leq k$.

Tridiagonal matrices appear frequently in many areas of pure and applied mathematics (see \cite{Meurant1992}). From the mathematical point of view, tridiagonal matrices appear in problems related to linear recurrence equations of second order, while from the perspective of Physics they appear related to problems comprising a system in series in which each subsystem is affected by (and only by) its immediate left and right subsystems. Consequently, tridiagonal $k$-Toeplitz matrices arise in those contexts when some periodicity of the studied problem or physical system is assumed. For example, tridiagonal matrices appear in trigonometric polynomials problems (\cite{EgervarySzasz1928}), number-theoretical problems involving second-order difference equations (\cite{AndelicFonseca2020}), the discretization of elliptic or parabolic partial differential equations by finite difference methods (\cite{FischerUsmani1969,MattheijSmooke1986,Yamamoto2001}), classical mechanics (\cite{Wittenburg1998}), chain models of quantum physics (\cite{AlvarezNodarsePetronilhoQuintero}), sound propagation theory (\cite{Chandler-WildeGover1989,Chandler-WildeGoverHothersah1986}), telecommunication system analysis (\cite{LuSun}), circuit models of wireless power transfer arrays (\cite{albertofast,MATRIX,AlbertoBrox}), etc. Thus, the determinant, the eigenvalues, or the inverse of the associated matrix may be invoked to solve these problems; and at times, only some specific entries of the inverse are needed (see e.g. \cite{AlbertoBrox}). Moreover, any complex square matrix is similar to a tridiagonal one (as shown for example by Lanczos' algorithm, \cite{Lanczos1950}), and therefore its determinant, spectral properties, and inverse may be computed from those of its associated tridiagonal matrix.

It is therefore natural that both general and $k$-Toeplitz tridiagonal matrices (and variations) have been studied many times independently in the past, in many different research areas, producing a vast literature in which formulas for the determinant, spectral properties and inverse abound; but most often treating specific cases, providing solutions with varying degree of explicitness, presenting unwieldy or inefficient formulas, or using intricate methods in their proofs. In particular, most results resort to division (at the least) and are therefore only suitable over fields. In addition, formulas are invariably offered without any accompanying complexity analysis (of algorithms based on them), which hinders the assessment of their efficiency and the comparison of formulas coming from different works. In contrast, in this paper we treat the problems in full generality: over commutative unital rings, for all pairs $(n,k)$, and for all possible entries; we provide explicit solutions (written only in terms of elementary operations of the ring) through accessible and efficient universal formulas, proved by elementary combinatorial and linear algebraic techniques; and we produce concrete algorithms based on our formulas, together with their complexity analysis.

\subsection{Previous literature}
\mbox{}

Hereunder we briefly review some of the most significant previous works. In his Treatise on the Theory of Determinants of 1882 (\cite{Muir1882}), Muir already studies the determinants of tridiagonal matrices, which he calls \e{continuants} due to their relation to continued fractions (\cite[Example 1 in p.157]{Muir1882}), establishes several interesting identities, and devises a recursive procedure to write a continuant in \e{non-determinant form} (in his own words), that is, as a polynomial expressed in the canonical basis of the underlying polynomial ring. In the second and much enlarged edition of Muir's Treatise (\cite{MuirMetzler1933}), prepared by Metzler up to 1928, the non-determinant form of a continuant is given by a non-recursive rule for writing each monomial (\cite[Item 545]{MuirMetzler1933}). Metzler also provides a polynomial formula for the elements of the adjugate matrix of a tridiagonal matrix in terms of smaller continuants (\cite[Item 555]{MuirMetzler1933}), from which a formula for the elements of the inverse readily follows. Their proofs are quite short and simple, and although they lack some rigor for nowadays standards, their combinatorial nature makes the formulas valid over any commutative unital ring. Mallik in \cite{Mallik2001} in essence rediscovered the non-determinant form of a continuant (while expressing it more formally) and Metzler's formula for the elements of the inverse of a tridiagonal matrix, with a more convoluted elaboration, valid only for irreducible tridiagonal matrices over fields (due to divisions). The same restrictions apply to Lewis' formula from 1982 for the elements of the inverse (\cite{Lewis1982}), based on computing two recurrence equations with many divisions, which is nevertheless different from (and slightly less efficient than) Metzler's.

Presumably, the first work studying tridiagonal $k$-Toeplitz matrices is Egerváry and Szász's \cite{EgervarySzasz1928} of 1928, in which they determined, in terms of graph theory, the characteristic polynomial and inverse powers of some symmetric Toeplitz (i.e., $1$-Toeplitz) complex matrices, particular examples which have been rediscovered with less elegant methods many times since (see \cite{FonsecaKowalenkoLosonczi2020} for more information). Rózsa in 1969 (\cite{Rozsa1969}) published a formula for the determinant of any irreducible and symmetric tridiagonal $k$-Toeplitz matrix over a field, which he extended to non-symmetric complex matrices through similarity (involving the use of square roots). Rózsa's formula is very interesting in that it is expressed by means of Chebyshev polynomials of the second kind. Later in 2005, using tools from the theory of orthogonal polynomials and apparently unaware of Rózsa's work, da Fonseca and Petronilho generalized Rózsa's formula to any complex irreducible tridiagonal $k$-Toeplitz matrix (symmetric or not) and produced another formula for the elements of its inverse, which is in essence Metzler's but with the corresponding continuants expressed by means of Chebyshev polynomials evaluated on some determinants of smaller matrices (the small cases $k=2,3$ having been considered previously with similar methods by the authors in \cite{FonsecaPetronilho2001}). Previously in 1998, Wittenburg in \cite[Section 4]{Wittenburg1998} already produced formulas for the elements of the inverse of any complex tridiagonal $k$-Toeplitz matrix, although in terms of smaller continuants\footnote{Interestingly, Wittenburg is the only author cited in this section that refers to Muir and Metzler's book in his referenced work.} which he did not compute explicitly, and through the somewhat convoluted study of a related recurrence equation.

With regard to spectral properties: Inspired by Rózsa's work, Elsner and Redheffer (\cite{ElsnerRedheffer1967}) studied in 1967 the characteristic polynomial and eigenvectors of complex tridiagonal $k$-Toeplitz matrices in the special cases $n\equiv 0\!\pmod k$ and $n\equiv k-1\!\pmod k$ and showed that, in the latter case, the characteristic polynomial factorizes through a Chebyshev polynomial of the second kind\footnote{This result easily explains the fact that if $F_k$ denotes the $k$th Fibonacci number, starting with $F_0:=0$, and $n$ is divisible by $k$, then $F_n$ is divisible by $F_k$ (\cite[Remark 8]{ElsnerRedheffer1967}).} (another result noted many times in the literature in more or less generality, starting with Egerváry and Szász). The eigenproblem of a complex tridiagonal $k$-Toeplitz matrix for any $n$ was studied in 1994-2000 for $k=2$ by Gover, and for $k=2,3$ by Marcellán and Petronilho (\cite{Gover1994,MarcellanPetronilho1997,MarcellanPetronilho2000}), and a solution for general $k$ was given for irreducible matrices by da Fonseca and Petronilho in the previously cited paper of 2005 (\cite{FonsecaPetronilho2005}).

On the other hand, the first author devised in \cite{AlbertoBrox} an elementary linear algebra algorithm to compute the entries of the inverse of a complex symmetric $k$-Toeplitz matrix with constant upper and lower diagonals, for a fixed $k$; in said algorithm, some determinants of smaller tridiagonal $k$-Toeplitz matrices needed to be computed, what was achieved through the diagonalization of an associated $2\times 2$ matrix. The results in this paper are inspired by this algorithm.

\subsection{Structure of the paper}
\numberwithin{equation}{subsubsection}

\begin{subpgraph}\n{Outline of our general method.} In the first place, in Section \ref{sectiondeterminant} we obtain formulas for the determinant of any tridiagonal $k$-Toeplitz matrix, expressed only in terms of sums and products of the underlying commutative unital ring. Then we write the characteristic polynomial and some eigenvectors (Section \ref{sectionspectralproperties}), and any element of the inverse (Section \ref{sectioninverse}), as functions of determinants of certain tridiagonal $k$-Toeplitz matrices, allowing us to apply our previous formulas and devise specific algorithms, whose complexity we analyze afterwards (Section \ref{complexitysection}). Throughout the paper, we compare our results with those found in the literature, when suited.

For the sake of completeness, and although our formulas work equally for both irreducible and reducible tridiagonal matrices, we study the reducible case separately (with block-triangular matrices techniques) whenever some extra knowledge is to be gained, as this study is not completely trivial (since the matrices are $k$-Toeplitz and we work over a commutative ring). Moreover, we already need to resort to block-triangular matrices when working with the inverse. Elementary notions about these concepts can be found in Section \ref{intromatrices}.

Informally speaking, the theorems and algorithms we develop produce the universal tridiagonal $k$-Toeplitz example, which is free in the underlying ring, the size, the period, and the elements of the diagonals of the matrix. In Section \ref{sectionexamples} we construct two examples which are more concrete: one with fixed size and period, and a completely specific one over $\Z/60\Z$.
\end{subpgraph}

\begin{subpgraph}\n{Determinant.} In rough terms, if the Euclidean division of $n$ by $k$ gives quotient $m$ and remainder $r$, then inside a tridiagonal $k$-Toeplitz matrix of order $n$ we can see $m$ complete ``tridiagonal blocks'' of period $k$ and one last incomplete ``tridiagonal block'' of size $r$ ---a ``tail''. In the proof of the main theorem of this paper, Theorem \ref{determinant}, we manage to decouple the effect of both parts on the determinant of the matrix, with the effect of the complete periods being encoded by generalized Fibonacci polynomials of order $m$ and the effect of the tail being encoded by some polynomials in $2k$ variables that we call continuant polynomials. This decoupling we achieve by solving a second-order linear difference equation with periodic coefficients (which appears from applying Laplace's expansion to the determinant twice) by writing it as a $2\times 2$ matrix difference equation and employing recursion, and then applying induction to show some combinatorial identities that prove the correctness of the associated polynomial formulas. Theorem \ref{determinant} unveils two related formulas for the determinant of a tridiagonal $k$-Toeplitz matrix; we show that one of them generalizes da Fonseca and Petronilho's formula based on Chebyshev polynomials of the second kind (Remark \ref{FonsecaPetronilho}).
\end{subpgraph}

\begin{subpgraph}\n{Generalized Fibonacci polynomials.} One of our main contributions to the subject under study is the application of generalized Fibonacci polynomials, which happen to fill the role, over arbitrary commutative unital rings, that Chebyshev polynomials of the second kind played over the complex numbers in other papers; specifically, the addition of a second variable allows us to remove the divisions and square roots needed together with Chebyshev polynomials, showing that the generalized Fibonacci polynomials are indeed the natural object to consider in this context. We study the properties of generalized Fibonacci polynomials in Section \ref{sectionFibonacci}, notably the following elementary but powerful fact, that we consider of independent interest and which we have not been able to locate in the literature: By the Cayley-Hamilton theorem, the powers of a $2\times2$ matrix are linear combinations of the matrix itself and the identity matrix; those linear combinations are parametrized by generalized Fibonacci polynomials evaluated on its trace and determinant (Lemma \ref{powers}(2)).
\end{subpgraph}

\begin{subpgraph}\n{Continuant polynomials.} We introduce continuant polynomials and their properties in Section \ref{sectioncontinuants}. Continuant polynomials are multivariate polynomials related to the Leibniz expansion of a periodic continuant. They satisfy abstract combinatorial definitions that allow to prove interesting identities between them by induction (Lemma \ref{recurrences}), including some essential recurrence relations. The definitions of continuant polynomials involve some intermediary divisions, which are carried out in rings of rational functions; this feature allows to avoid altogether the divisions appearing in other approaches that resort to recurrence relations.
\end{subpgraph}

\begin{subpgraph}\n{Spectral properties.} The characteristic polynomial of a tridiagonal $k$-Toeplitz matrix over $K$ arises as the determinant of a tridiagonal $k$-Toeplitz matrix over $K[X]$, which we are already able to determine. We show that the factorization property of the characteristic polynomial for the case $n\equiv k-1\!\pmod k$ generalizes to any commutative unital ring (Remark \ref{charpolfactorizesspecialcase}). We also procure a procedure to construct an eigenvector associated to a given eigenvalue in Theorem \ref{eigenvectors}, which significantly generalizes the previously known constructions and sufficient conditions for its existence (see Remark \ref{eigenvectorsgeneralizespreviousresults}).
\end{subpgraph}

\begin{subpgraph}\n{Inverse.} The entries of the inverse are computed from the adjugate matrix in Theorem \ref{inverse}. The associated submatrices of the first minors of a tridiagonal $k$-Toeplitz matrix are block triangular with three diagonal blocks: the middle one is triangular and the two in the extremes are tridiagonal $k$-Toeplitz matrices, allowing to write the cofactor from the product of their determinants, which we are already able to compute.
\end{subpgraph}

\begin{subpgraph}\n{Algorithms and complexity analysis.} After introducing the necessary notions in Section \ref{introcomplexity}, in Section \ref{complexitysection} we conduct a worst-case algebraic complexity analysis of algorithms based on the previously introduced formulas. We compare four main algorithms for computing the determinant, all arising from different ideas used in the proof of Theorem \ref{determinant}. We study algorithms for the case of a general (non-periodic) tridiagonal matrix. We compare the efficiency of our algorithms with others based on Lewis' and da Fonseca and Petronilho's formulas.
We find the following complexities for our algorithms\footnote{For functions $f,g:\N^2\rightarrow\N$, we have $f(n,k)=O(g(n,k))$ when there exist constants $M\in\N$ and $c>0$ such that $|f(n,k)|\leq c|g(n,k)|$ for all $n,k\geq M$.}:

\begin{center}
\begin{tabular}{|c|c|}
  \hline
  \n{Object} & \n{Complexity} \\\hline\hline
  Determinant & $O(21\log_2(n/k)+7k)$ \\\hline
  Char. poly. & $O(21\log_2(n/k)+7k)$ \\\hline
  Eigenvector & $O(6n+k)$ \\\hline
  Inverse entry & $O(68\log_2(n/k)+14k)$ \\\hline
  Full inverse & $O(\frac52n^2+2kn)$ \\
  \hline
\end{tabular}
\end{center}
\end{subpgraph}

\section{Preliminaries and notation}\label{preliminaries}

\numberwithin{equation}{subsection}

\begin{pgraph}\n{Commutative rings.} Throughout this paper let $K$ be any commutative unital ring.  If $K$ is a field, by $\ov K$ we denote an algebraic closure of $K$. An element $a\in K$ is a \e{zero divisor} if there is $0\neq b\in K$ such that $ab=0$; an element which is not a zero divisor is called \e{regular}. Note that $0$ is a zero divisor, and that if $a_1a_2$ is a zero divisor with $a_1,a_2\in K$ then $a_1$ or $a_2$ is a zero divisor. The \e{annihilator} of $a\in K$ is $\Ann(a):=\{b\in K \ | \ ab=0\}$; we have $\Ann(a)=0$ if and only if $a$ is a regular element. The \e{total quotient ring} $Q(K)$ of $K$ is the localization $S^{-1}K$ with $S$ the set of regular elements; it is an injective extension of $K$ in which every regular element is a unit.
\end{pgraph}

\begin{pgraph}\n{Combinatorial objects.} In this paper $\N$ denotes the natural numbers with $0\in\N$, $\N^*$ stands for $\N\setminus\{0\}$, an empty summation yields $0$, and an empty product yields the identity of the ambient ring. Notation $\lfloor\,\cdot\,\rfloor$ stands for the floor function from $\Q$ to $\Z$, and $\binom ij$ with $i,j\in\N$ stands for the image of the corresponding binomial coefficient under the canonical homomorphism from $\Z$ to $K$, understanding $\binom ij=0$ when $i<j$.
For $k\in\N^*$, $\ov a:=(a_1,\ldots,a_k)$ describes a vector of $K^k$, if $x\in K$ then $\ov x:=(x,\ldots,x)\in K^k$, and $\lambda\ov a+\mu\ov b$ with $\lambda,\mu\in K$, $\ov a,\ov b\in K^k$ is the usual linear combination of vectors. Given $\ov a:=(a_1,\ldots,a_k)\in K^k$, we extend it periodically by defining $a_{i+k}:=a_i$ for $i\in\N^*$.
By $S_k$ we denote the symmetric group on $k$ elements acting on $K^k$ and by $\sigma_s\in S_k$, $s\in\N$, the $j$th cyclic permutation to the left, so that $\sigma_0(x_1,\ldots,x_k)=(x_1,\ldots,x_k)$, $\sigma_1(x_1,\ldots,x_k)=(x_2,\ldots,x_k,x_1)$, $\sigma_k=\sigma_0$, etc.
\end{pgraph}

\begin{pgraph}\n{Matrices.}\label{intromatrices} For $n\in\N^*$, $\Mat_n(K)$ denotes the ring of square matrices of order $n$ over $K$, $I_n\in\Mat_n(K)$ denotes the identity matrix, and $\tr(A),\det(A),A^T$ respectively denote the trace, the determinant, and the transpose of matrix $A\in\Mat_n(K)$, which is invertible over $K$ if and only if $\det(A)$ is a unit of $K$ (\cite[Corollary 2.21]{Brown1993}).

Consider the matrix $A\in\Mat_n(K)$, $A=(a_{ij})_{i,j=1}^n$. $A$ is \e{tridiagonal} if $a_{ij}=0$ for all $i,j$ such that $|i-j|\geq2$. The \e{upper main diagonal} and \e{lower main diagonal} of $A$ are, respectively, the vectors $(a_{12},\ldots,a_{n-1,n})$ and $(a_{21},\ldots,a_{n,n-1})$. A tridiagonal matrix is \e{irreducible} if its upper and lower main diagonals have no zeros, \e{reducible} otherwise.
Given $k\in\N^*$, the matrix $A$ is \e{$k$-Toeplitz} if
\[a_{i+k,j+k}=a_{ij}\text{ for all }1\leq i,j\leq n-k.\]
Given $k,n\in\N^*$ and $\ov a:=(a_1,\ldots,a_k),\ov b:=(b_1,\ldots,b_k),\ov c:=(c_1,\ldots,c_k)\in K^k$, by $\bm{T^k_n(\ov a,\ov b,\ov c)}\in\Mat_n(K)$, $T^k_n(\ov a,\ov b,\ov c)=(t_{ij})_{i,j=1}^n$, we denote the tridiagonal $k$-Toeplitz matrix such that $t_{ii}:=a_i$ for $1\leq i\leq\min(n,k)$ and $t_{i,i+1}:=b_i$, $t_{i+1,i}:=c_i$ for $1\leq i\leq\min(n-1,k)$, i.e., the vectors $\ov a,\ov b,\ov c$ periodically generate, respectively, the main, upper main and lower main diagonals of $T^k_n(\ov a,\ov b,\ov c)$.

A matrix $A\in\Mat_n(K)$ is \e{block lower triangular} if there exists a nontrivial partition of $A$ into blocks, the \e{associated partition} $A=(A_{ij})_{i,j=1}^q$ of \e{size} $q$ ($q>1$), in which the diagonal blocks $A_{ii}$ are square matrices for $1\leq i\leq q$ and $A_{ij}=0$ if $1\leq i<j\leq q$; $A$ is \e{block upper triangular} when $A^T$ is block lower triangular (if $A^T=(B_{ij})_{i,j=1}^q$ then an associated partition of $A$ is $(B_{ji}^T)_{i,j=1}^q$); and $A$ is \e{block triangular} if it is block lower triangular or block upper triangular. Given two partitions showing $A$ as block triangular, the \e{finer} one is that with greater size. The following result is well known over fields.

\begin{thm}[\n{Determinant of a block triangular matrix}]\label{blockdeterminant}
\mbox{}

\nin If $A\in\Mat_n(K)$ is block triangular with associated partition $A=(A_{ij})_{i,j=1}^q$ then
\[\det(A)=\prod_{i=1}^q \det(A_{ii}).\]
\end{thm}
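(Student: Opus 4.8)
The plan is to reduce the general block-triangular case to the block lower triangular case and then prove the latter by induction on the size $q$ of the associated partition. First I would observe that it suffices to treat block lower triangular matrices: if $A$ is block upper triangular with associated partition $(A_{ij})_{i,j=1}^q$, then $A^T$ is block lower triangular with diagonal blocks $A_{ii}^T$, and since $\det(A)=\det(A^T)$ and $\det(A_{ii})=\det(A_{ii}^T)$ (both facts holding over any commutative ring), the upper-triangular formula follows immediately from the lower-triangular one.

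For the block lower triangular case, I would induct on $q$. The base case $q=2$ is the crux of the argument, so I would treat it first and carefully. Here $A=\begin{pmatrix} A_{11} & 0 \\ A_{21} & A_{22}\end{pmatrix}$ with $A_{11}\in\Mat_p(K)$ and $A_{22}\in\Mat_{n-p}(K)$ square. The natural approach is the Leibniz expansion of $\det(A)=\sum_{\sigma\in S_n}\sgn(\sigma)\prod_{i=1}^n a_{i\sigma(i)}$. The key combinatorial observation is that any permutation $\sigma$ contributing a nonzero term must map the index set $\{1,\ldots,p\}$ into itself: if some $i\leq p$ had $\sigma(i)>p$, then the entry $a_{i\sigma(i)}$ lies in the upper-right zero block and the whole product vanishes. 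Since $\sigma$ is a bijection on $\{1,\ldots,p\}$ of cardinality $p$, it restricts to a permutation $\sigma_1$ of $\{1,\ldots,p\}$ and, complementarily, to a permutation $\sigma_2$ of $\{p+1,\ldots,n\}$. The sign multiplies as $\sgn(\sigma)=\sgn(\sigma_1)\sgn(\sigma_2)$ (both restrictions having disjoint supports), and the product factors across the two blocks, yielding
\[\det(A)=\Bigl(\sum_{\sigma_1}\sgn(\sigma_1)\prod_{i\leq p}a_{i\sigma_1(i)}\Bigr)\Bigl(\sum_{\sigma_2}\sgn(\sigma_2)\prod_{i>p}a_{i\sigma_2(i)}\Bigr)=\det(A_{11})\det(A_{22}),\]
where the entries of $A_{21}$ never appear because they correspond to $i>p$, $\sigma_2(i)\leq p$, which cannot occur for a permutation of $\{p+1,\ldots,n\}$.

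For the inductive step with $q>2$, I would group the partition into two coarser blocks: take $A_{11}$ as the first diagonal block and regard the remaining rows and columns as a single large block, so that $A$ is block lower triangular of size $2$ with diagonal blocks $A_{11}$ and the trailing submatrix $A'=(A_{ij})_{i,j=2}^q$. Applying the already-proven $q=2$ case gives $\det(A)=\det(A_{11})\det(A')$, and $A'$ is itself block lower triangular with associated partition of size $q-1$, so the induction hypothesis yields $\det(A')=\prod_{i=2}^q\det(A_{ii})$; combining the two gives the claim. The main obstacle is the $q=2$ case, and within it the careful bookkeeping of the sign factorization $\sgn(\sigma)=\sgn(\sigma_1)\sgn(\sigma_2)$ together with the verification that the argument uses nothing beyond the ring axioms (no division, no cancellation), which is exactly what is needed for validity over an arbitrary commutative unital ring.
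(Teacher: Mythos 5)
Your proposal is correct and follows essentially the same route as the paper: Leibniz expansion for the base case $q=2$ (with the same observation that nonvanishing terms come from permutations stabilizing the first block of indices, so the sign and product factor across $S_p\times S_{n-p}$), induction on the partition size for general $q$, and reduction of the upper triangular case via transposition. The only cosmetic difference is that you split off the first diagonal block in the inductive step while the paper merges the last two blocks into a coarser partition; the two bookkeeping choices are interchangeable.
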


\begin{proof}
We proceed by induction on the size of the partition. Suppose first $A=(a_{ij})_{i,j=1}^n$ is block lower triangular with associated partition of size $2$, $A=\pma B & 0\\ C & D\epma$ with $B\in\Mat_m(K)$ ($m<n$), $B=(b_{ij})_{i,j=1}^m$, $D\in\Mat_{n-m}(K)$, $D=(d_{ij})_{i,j=1}^{n-m}$. For permutation $\sigma\in S_n$ denote $a_\sigma:=\sgn(\sigma)\prod_{i=1}^n a_{i,\sigma(i)}$. Leibniz's formula states $\det(A)=\sum_{\sigma\in S_n} a_\sigma$. Since $A$ is block lower triangular, $a_\sigma=0$ except when $\sigma$ permutes the first $m$ elements among themselves, the subgroup of $S_n$ of such permutations being isomorphic to $S_m\times S_{n-m}$, with $\sigma\mapsto(\tau,\rho)$ implying $\sgn(\sigma)=\sgn(\tau)\sgn(\rho)$. Therefore
\[\det(A)=\sum_{\sigma\in S_n} a_\sigma = \!\!\!\!\sum_{(\tau,\rho)\in S_m\times S_{n-m}}\!\!\!\!\!\!\!\!\!\!\!\! b_\tau d_\rho \, = \sum_{\tau\in S_m} b_\tau \!\!\sum_{\rho\in S_{n-m}}\!\!\!\! d_\rho = \det(B)\det(D).\]
Now suppose the induction hypothesis true; if $A=(A_{ij})_{i,j=1}^{q+1}$ is an associated partition of size $q+1>2$ then an associated partition of $A$ of size $q\geq 2$ is $A=(A'_{ij})_{i,j=1}^q$ with $A'_{ij}=A_{ij}$ if $i,j<q$, $A'_{i,q}=0$ for $i<q$, $A'_{q,j}=\pma A_{q,j}\\A_{q+1,j}\epma$ for $j<q$, and $A'_{qq}=\pma A_{qq} & 0\\ A_{q,q+1} & A_{q+1,q+1}\epma$; by the induction hypothesis, $\det(A)=\prod_{i=1}^q\det(A'_{ii}) = \prod_{i=1}^{q-1}\det(A_{ii})\cdot \det(A_{qq})\det(A_{q+1,q+1})$, as we wanted to prove. Finally, if $A$ is block upper triangular, then $\det(A)=\det(A^T$) with $A^T$ block lower triangular.
\end{proof}

Tridiagonal matrices are not block triangular in general, but they are close in some senses. For example, the associated submatrices of the first minors of a tridiagonal matrix are block triangular (see the proof of Theorem \ref{inverse}). In addition, reducible tridiagonal matrices are block triangular.
\begin{rem}[\n{Reducible tridiagonal matrices are block triangular}]\label{tridiagonalblocktriangular}
\mbox{}

If
$T:=\pma
a_1 & b_1 & & \\
c_1 & a_2 & b_2 &\\
& \ddots & \ddots & \ddots\\
& & c_{n-1} & a_n
\epma$ has $b_i=0$ then
\[T=\left(\begin{array}{cccc|ccc}
a_1 & b_1 & & &&&\\
c_1 & \ddots & \ddots &&&&\\
& \ddots & a_{i-1} & b_{i-1}&&&\\
& & c_{i-1} & a_i & \bm0 &&\\\hline
& & & c_i & a_{i+1} & b_{i+1} & \\
& & & & \ddots & \ddots & \ddots\\
& & & & & c_{n-1} & a_n
\end{array}\right)\]
is an associated partition of size $2$ showing that $T$ is block lower triangular, with diagonal blocks which are tridiagonal matrices. Analogously, if $c_i=0$ then $T$ is block upper triangular with tridiagonal diagonal blocks. If in addition $T$ is $k$-Toeplitz, then so are its diagonal blocks; concretely, if $T=T^k_n(\ov a,\ov b,\ov c)$ then its first diagonal block is $T^k_i(\ov a,\ov b,\ov c)$, the second one $T^k_{n-i}(\sigma_i(\ov a),\sigma_i(\ov b),\sigma_i(\ov c))$ (which contains $b_i=0$ again in the upper main diagonal, and so is reducible and can be decomposed further by blocks, if $n>k+i$).
\end{rem}

Given $A\in\Mat_n(K)$, its \e{characteristic polynomial} is $p_A(x):=\det(xI_n-A)\in K[x]$. An element $\lambda\in K$ is an \e{eigenvalue} of $A$ if there is a nonzero column\footnote{For ease of reading, we also enumerate column vectors component by component, as in $v=(v_1,\ldots,v_n)$.} vector $v\in K^n$ such that $Av=\lambda v$; we say that $v$ is an \e{eigenvector} of $A$ \e{associated to $\lambda$}. An element $\lambda\in K$ is an eigenvalue of $A$ if and only if $p_A(\lambda)$ is a zero divisor of $K$ (\cite[Lemma 17.2]{Brown1993}).

\begin{lem}[\n{Eigenvalues of a block triangular matrix}]\label{blockeigenvalues}
\mbox{}

\nin If $A\in\Mat_n(K)$ is block triangular with associated partition $A=(A_{ij})_{i,j=1}^q$ and $\lambda$ is an eigenvalue of $A$ then $\lambda$ is an eigenvalue of $A_{ii}$ for some $1\leq i\leq n$.
\end{lem}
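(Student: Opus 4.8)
The plan is to reduce the statement to a factorization of the characteristic polynomial combined with the zero-divisor characterization of eigenvalues over $K$. First I would observe that if $A=(A_{ij})_{i,j=1}^q$ is block triangular (say block lower triangular, the upper case being symmetric), then so is $xI_n-A$ as a matrix over the commutative unital ring $K[x]$: the off-diagonal zero blocks of $A$ remain zero, while each diagonal block becomes $xI_{n_i}-A_{ii}$, where $n_i$ is the order of $A_{ii}$. Applying Theorem \ref{blockdeterminant} over $K[x]$ then gives
\[p_A(x)=\det(xI_n-A)=\prod_{i=1}^q\det(xI_{n_i}-A_{ii})=\prod_{i=1}^q p_{A_{ii}}(x).\]

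Next, since the evaluation map $K[x]\rightarrow K$, $f(x)\mapsto f(\lambda)$, is a ring homomorphism, specializing at $\lambda$ yields $p_A(\lambda)=\prod_{i=1}^q p_{A_{ii}}(\lambda)$ in $K$. Because $\lambda$ is an eigenvalue of $A$, the element $p_A(\lambda)$ is a zero divisor of $K$ (\cite[Lemma 17.2]{Brown1993}). I would then invoke the elementary fact recorded in the Commutative rings paragraph of Section \ref{preliminaries}, namely that a product of two elements which is a zero divisor has a zero-divisor factor; iterating it over the $q$ factors (a short induction on $q$) shows that $p_{A_{ii}}(\lambda)$ is a zero divisor of $K$ for at least one index $i$. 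By the same characterization of eigenvalues applied to $A_{ii}$, this says precisely that $\lambda$ is an eigenvalue of $A_{ii}$, as desired.

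There is no substantial obstacle here; the argument is essentially forced once one notices that the block-triangular structure is inherited by $xI_n-A$. The only points requiring minimal care are that Theorem \ref{blockdeterminant} is applied over the polynomial ring $K[x]$ rather than over $K$ (which is legitimate, as $K[x]$ is again a commutative unital ring), and that the two-factor zero-divisor fact must be promoted to its $q$-factor version by induction. The block upper triangular case follows either by transposition, using $p_A=p_{A^T}$ together with $p_{A_{ii}}=p_{A_{ii}^T}$, or directly, since Theorem \ref{blockdeterminant} already covers both orientations.
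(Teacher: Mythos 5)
Your proposal is correct and follows essentially the same route as the paper: both arguments combine Theorem \ref{blockdeterminant} applied to the shifted matrix with the zero-divisor characterization of eigenvalues from \cite[Lemma 17.2]{Brown1993} and the fact that a zero-divisor product has a zero-divisor factor. The only (cosmetic) difference is that you factor $p_A(x)=\prod_{i=1}^q p_{A_{ii}}(x)$ over $K[x]$ and then evaluate at $\lambda$, whereas the paper substitutes $\lambda$ first and applies Theorem \ref{blockdeterminant} to the block triangular matrix $\lambda I_n-A$ directly over $K$.
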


\begin{proof}
Since $\lambda$ is an eigenvalue of $A$, $p_A(\lambda)=\det(\lambda I_n-A)$ is a zero divisor of $K$, and since $A$ is block triangular, $\lambda I_n-A$ is a block triangular matrix with associated partition $(A'_{ij})_{i,j=1}^q$, $A'_{ij}:=-A_{ij}$ if $i\neq j$, $A'_{ii}:=\lambda I_{n_i}-A_{ii}$ with $n_i$ the order of $A_{ii}$. By Theorem \ref{blockdeterminant}, $\det(\lambda I_n-A)=\prod_{i=1}^q\det(\lambda I_{n_i}-A_{ii})$, so $p_{A_{ii}}(\lambda)=\det(\lambda I_{n_i}-A_{ii})$ is a zero divisor of $K$ for some $1\leq i\leq n$, i.e., $\lambda$ is an eigenvalue of $A_{ii}$ for some $1\leq i\leq n$.
\end{proof}
\end{pgraph}

\begin{pgraph}\n{Polynomials.} Given a polynomial $f\in \Z[x_1,\ldots,x_k,y_1,\ldots,y_k]$ and vectors $\ov a:=(a_1,\ldots,a_k),$ $\ov b:=(b_1,\ldots,b_k)\in K^k$, we define the \e{evaluation} $f^{\ov a,\ov b}$ as the image of the evaluation of $f$ into $K^k$ mapping $x_i\mapsto a_i$ and $y_i\mapsto b_i$ for $1\leq i\leq k$. Given $s\in\N$ we define the (cyclic) \e{shift of $f$ by $s$} as
\[f_s:=f(x_{s+1},\ldots,x_{s+k},y_{s+1},\ldots,y_{s+k})\]
(recall that, by periodic extension, we denote $x_{i+k}:=x_i, y_{i+k}:=y_i$ for all $i\in\N^*$). In other words, $f_s=f^{\sigma_s(\ov x),\sigma_s(\ov y)}$ with $\ov x:=(x_1,\ldots,x_k), \ov y:=(y_1,\ldots,y_k)$. Observe that shifts are automorphisms of $\Z[x_1,\ldots,x_k,y_1,\ldots,y_k]$ and that \\ $(f_{s_1})_{s_2}=f_{s_1+s_2}$ (we say that \e{shifts are additive}) for $s_1,s_2\in\N$.
Note that when a shift and an evaluation have both to be applied, the shift must be applied first, giving
\[f^{\ov a,\ov b}_s = f^{\sigma_s(\ov a),\sigma_s(\ov b)} = f(a_{s+1},\ldots,a_{s+k},b_{s+1},\ldots,b_{s+k}).\]
\end{pgraph}

\begin{pgraph}\n{Complexity.}\label{introcomplexity} After we arrive to formulas for the determinant, spectral properties, and elements of the inverse of a tridiagonal $k$-Toeplitz matrix, we will study in Section \ref{complexitysection} the complexity of different algorithms arising from the formulas and related procedures, in order to compare them in terms of efficiency. We describe now the relevant ideas and make definitions in a somewhat informal manner; a completely rigorous treatment is out of the scope of this paper.
We make use of algebraic complexity, in the tradition of Ostrowski and Winograd (\cite{Ostrowski1954,Winograd1971}); given a commutative ring $K$, we consider a model of computation in which:
\enuma
\item The elementary operations of $K$ (addition, substraction, multiplication, division if $K$ is a field, or Euclidean division if $K=\Z$) have unit cost each.
\item  Every intermediate result computed by an algorithm is available for its subsequent steps.
\eenum
For our purposes it is enough to consider a Turing machine $M_K$ provided with oracles for the elementary operations of $K$ (for a more specific model of computation, see \cite{BurgisserClausenShokrollahi1997})\footnote{Observe that the bit complexity as defined for a usual Turing machine is the algebraic complexity in $M_{\Z_2}$, while the arithmetic complexity is that in $M_{\Z}$.}.
Then, given a mathematical entity $X$ (square root, inverse of a matrix, determinant, etc.) defined over some commutative unital ring $K$, the \e{algebraic complexity} (or \e{cost}) of an algorithm $F$ that computes $X$ in $M_K$, denoted by $\C_F(X)=\C^K_F(X)$, is the number of elementary operations needed in $K$ to arrive at the result in the worst case of the algorithm\fn{A complexity of $0$ or less means that no additional computations are needed from the original data.}. When necessary, we write the number $x$ of operations as $(x)_K$ to emphasize the underlying ring. If an algorithm $F$ uses several different rings $K_1,\ldots,K_p$ in the computation of $X$ (for example, $K$ for managing elementary operations and $\Z$ for managing exponents)\footnote{To achieve this we can work in the Turing machine $M_{K_1\oplus\cdots\oplus K_p}$.} then we define its complexity as the external sum $\C_F(X):=\sum_{i=1}^p\C^{K_i}_F(X)\in\N^p$ (e.g. we may have $\C_F(X)=(100)_K+(10)_\Z$). As an example, if $F$ is the algorithm multiplying $m$ matrices $A_1,\ldots,A_m\in\Mat_2(K)$ by naive matrix multiplication (rows times columns) then $\C_F(A_1\cdots A_m)=12(m-1)$, since (in worst case) each entry of the product of two matrices is computed through $2$ products and $1$ sum, there are $4$ nonzero entries in a $2\times2$ matrix, and $m-1$ products of two matrices are realized.

We will need to compare algorithms which compute families of entities depending on several parameters ($n$, $k$, etc.). For this we define our notion of efficiency below (not completely operational, but enough for our purposes). Informally:
\enuma
\item We will apply first a coarse, limit comparison ($100\log_2 n$ operations are more efficient than $n$ operations), then and only if needed, a finer comparison ($n$ operations are more efficient than $2n$ operations).
\item We will give priority to some parameters over the others; mainly, to $n$ over $k$, by considering first $n$ variable and $k$ constant ($\log_2 n+k$ operations are more efficient than $n+\log_2 k$ operations), then $k$ variable only if needed.
\eenum
When algorithms $F$ and $G$ both compute a specific entity $X$ in $M_K$, we say that $F$ is \e{more efficient} than $G$ when computing $X$ if $\C_F^K(X)<\C_G^K(X)$ in $\N$. Now suppose $F$ and $G$ both compute in $M_K$ a family of entities $\{X_m\}$ indexed by one parameter $m\in\N$, and define their \e{complexity functions} $f(m):=\C_F^K(X_m)$, $g(m):=\C_G^K(X_m)$; we say that $f(m)\equiv g(m)$ when there exists some $m_0\in\N$ such that $f(m)=g(m)$ for $m>m_0$, and that $f(m)\prec g(m)$ if either $f(m)=o(g(m))$\footnote{$f(m)=o(g(m))$ when $\displaystyle\lim_{m\rightarrow\infty}\frac{f(m)}{g(m)}=0$. For example we have $\log m=o(m)$.} or $g(m)=cf(m)$ with $c\in\Q$, $c>1$ (these two cases are not complementary, but are the only ones needed in this paper). Then we say that $F$ is \e{more efficient} than $G$ if $f(m)\prec g(m)$. Now suppose that $F$ and $G$ both compute in $M_K$ a family of entities $\{X_{m_1,\ldots,m_p}\}$ indexed by $p>1$ ordered parameters $m_1,\ldots,m_p\in\N$, and define their complexity functions $f(m_1,\ldots,m_p):=\C_F^K(X_{m_1,\ldots,m_p})$, $g(m_1,\ldots,m_p):=\C_G^K(X_{m_1,\ldots,m_p})$. We give priority to smaller indices in the comparison of complexities, hence we say that $f(m_1,\ldots,m_p)\prec g(m_1,\ldots,m_p)$ ($F$ is \e{more efficient} than $G$) if, when considering $m_2,\ldots,m_p$ fixed as constants, either $f(m_1)\prec g(m_1)$ or $f(m_1)\equiv g(m_1)$ and $f(m_2,\ldots,m_p)\prec g(m_2,\ldots,m_p)$ (as functions in $p-1$ variables)\footnote{Our $\prec$ relation is not connected (e.g. $(m_2+2m_3)m_1$ and $(2m_2+m_3)m_1)$ are incomparable), but all the relevant pairs of functions appearing in this paper are comparable.}. Finally, if $F$ and $G$ both compute a family of entities using $q$ different rings, then in order to compare their efficiencies we consider the costs in all rings to be equivalent, and so if $\C_F(X_{m_1,\ldots,m_p})=(f_1(m_1,\ldots,m_p)_{K_1},\ldots,f_q(m_1,\ldots,m_p)_{K_q})$ then we use $f=f_1+\cdots+f_q$ as the complexity function of $F$.

\smallskip

Herein, let us justify that this resorting to algebraic complexity is not superfluous. A ring $K$ is \e{computably presentable} (in the usual sense) if it has an isomorphic presentation (a \e{computable presentation}) in which the elementary operations are computable and the equality relation is decidable, in an ordinary Turing machine $M_{\Z_2}$. When $K$ is computably presentable, the \e{bit complexity} of an algorithm $F$ over $K$ is defined as the algebraic complexity of $F$ (in $M_{\Z_2}$) for a previously fixed computable presentation of $K$. Thus, the usual bit complexity is not well defined for arbitrary commutative rings, as there are many commutative rings which are not computably presentable: to begin with, no uncountable commutative ring is computably presentable; and there even exist countable noncomputably presentable commutative rings, such as $\Q[\sqrt{p_i} \ | \ T_i\text{ does not halt}]$, where $p_i$ is the $i$th prime number and $T_i$ is the $i$th Turing machine. Moreover, in an ordinary Turing machine, different computable presentations of the same ring may produce different bit complexities for the same algorithm. In contrast, the use of algebraic complexity allows to abstractly compare the efficiency of different algorithms over the same ring, for an arbitrary ring. In addition, if $K$ is given a computable presentation, then the bit complexity of an algorithm in this presentation can be found from its algebraic complexity analysis by inserting the bit complexity of each elementary operation in the corresponding places.
\end{pgraph}

\section{Generalized Fibonacci polynomials}\label{sectionFibonacci}

\numberwithin{equation}{section}

\begin{defn}[\n{Generalized Fibonacci polynomials}]\label{Fibonacci}
\mbox{}

\nin Given $m\in\N$, we define the (bivariate) \e{generalized Fibonacci polynomial of order $m$} over $K$ as
\[U_m(x,y):=\sum_{i=0}^{\mathclap{\lfloor(m-1)/2\rfloor}}\; (-1)^i\binom{m-1-i}i x^{m-1-2i}y^i.\]
Note that $U_0(x,y)=0$, $U_1(x,y)=1$.
Since these polynomials were introduced by Lucas in \cite{Lucas}, their sequence is also called the \e{Lucas polynomial sequence of the first kind} (\cite[p. 2]{Ribenboim}).
\end{defn}

\smallskip

By the Cayley-Hamilton theorem, the powers of a $2\times2$ matrix are linear combinations of the matrix itself and the identity matrix; as it turns out, those linear combinations are parametrized by generalized Fibonacci polynomials evaluated on its trace and determinant.

\begin{lem}[\n{Properties of generalized Fibonacci polynomials}]\label{powers}
\mbox{}

\enum
\item For all $m\in\N^*$,
\[U_{m+1}(x,y)=xU_m(x,y) - yU_{m-1}(x,y).\]
\item If $A\in\Mat_2(K)$ then, for all $m\in\N^*$,
\[A^m=U_m(\tr(A),\det(A))A - \det(A)U_{m-1}(\tr(A),\det(A))I_2.\]
\eenum
\end{lem}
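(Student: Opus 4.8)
The plan is to establish part (1) as a purely combinatorial identity by manipulating the defining sum of $U_m$, and then to deduce part (2) by a short induction that feeds on part (1) together with the Cayley--Hamilton theorem for $2\times2$ matrices. The two parts must be handled in this order, since part (1) is exactly the ingredient that makes the inductive step of part (2) go through.

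For part (1), I would compare the coefficients of $x^{m-2i}y^i$ on both sides of the claimed recurrence. Writing out the three sums defining $U_{m+1}$, $xU_m$, and $yU_{m-1}$, and reindexing the last one by $j=i+1$ so that (after the sign flip coming from $-yU_{m-1}$) it contributes $(-1)^{j}\binom{m-1-j}{j-1}x^{m-2j}y^{j}$, the coefficient of $x^{m-2i}y^i$ in $xU_m-yU_{m-1}$ becomes $(-1)^i\big[\binom{m-1-i}{i}+\binom{m-1-i}{i-1}\big]$ at the interior indices. Pascal's rule $\binom{m-1-i}{i}+\binom{m-1-i}{i-1}=\binom{m-i}{i}$ then reproduces exactly the coefficient of $U_{m+1}$. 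The one genuinely delicate point, and the main obstacle here, is the bookkeeping of the endpoints: the upper summation limit is $\lfloor(m-1)/2\rfloor$ for $U_m$ but $\lfloor m/2\rfloor$ for $U_{m+1}$, and these differ precisely when $m$ is even. I would therefore treat the extreme index separately, checking that for even $m=2\ell$ the top term $i=\ell$ of $U_{m+1}$ (equal to $(-1)^\ell y^\ell$) is supplied solely by $-yU_{m-1}$, while the $i=0$ term $x^m$ comes solely from $xU_m$. Since all the binomial coefficients are taken under the canonical map $\Z\to K$, these identities remain valid over any commutative unital ring.

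For part (2), set $t:=\tr(A)$ and $d:=\det(A)$, and argue by induction on $m\in\N^*$, using the normalization $U_0=0$, $U_1=1$. The base case $m=1$ reads $A=U_1(t,d)A-dU_0(t,d)I_2=A$, which holds. For the inductive step, recall that the Cayley--Hamilton theorem gives $A^2=tA-dI_2$, the characteristic polynomial of a $2\times2$ matrix being $\lambda^2-t\lambda+d$. Multiplying the inductive hypothesis $A^m=U_m(t,d)A-dU_{m-1}(t,d)I_2$ by $A$ and substituting $A^2=tA-dI_2$ yields $A^{m+1}=\big[tU_m(t,d)-dU_{m-1}(t,d)\big]A-dU_m(t,d)I_2$. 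Applying part (1) to the bracketed scalar, namely $tU_m(t,d)-dU_{m-1}(t,d)=U_{m+1}(t,d)$, collapses this to $A^{m+1}=U_{m+1}(t,d)A-dU_m(t,d)I_2$, which completes the induction. Apart from the endpoint care in part (1), this second part is an entirely mechanical consequence, valid verbatim over any commutative unital ring since Cayley--Hamilton holds in that generality.
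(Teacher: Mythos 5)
Your proposal is correct and follows essentially the same route as the paper's proof: part (1) by expanding the defining sums, reindexing, applying Pascal's rule $\binom{m-1-i}{i}+\binom{m-1-i}{i-1}=\binom{m-i}{i}$, and handling the endpoint discrepancy between $\lfloor(m-1)/2\rfloor$ and $\lfloor m/2\rfloor$ (the paper disposes of it via $\binom{m-1-\lfloor m/2\rfloor}{\lfloor m/2\rfloor}=0$ for even $m$, which matches your observation that the top term is supplied solely by $-yU_{m-1}$); and part (2) by the identical induction using Cayley--Hamilton $A^2=\tr(A)A-\det(A)I_2$ and item (1) in the inductive step. Your remarks that the binomial identities descend along the canonical map $\Z\to K$ and that Cayley--Hamilton holds over any commutative unital ring correctly account for the stated generality.
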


\newpage
\begin{proof}
\mbox{}

\enum
\item $xU_m(x,y) - yU_{m-1}(x,y) =$
\eq{
&= \sum_{i=0}^{\mathclap{\lfloor(m-1)/2\rfloor}}\; (-1)^i\binom{m-1-i}i x^{m-2i}y^i +\sum_{i=0}^{\mathclap{\lfloor(m-2)/2\rfloor}}\; (-1)^{i+1}\binom{m-2-i}i x^{m-2-2i}y^{i+1} = \\
&= x^m+\sum_{i=1}^{\mathclap{\lfloor(m-1)/2\rfloor}}\; (-1)^i\binom{m-1-i}i x^{m-2i}y^i +\sum_{i=1}^{\mathclap{\lfloor m/2\rfloor}}\; (-1)^i\binom{m-1-i}{i-1} x^{m-2i}y^{i} = \\
&= x^m+\sum_{i=1}^{\mathclap{\lfloor m/2\rfloor}}\; (-1)^i\left(\binom{m-1-i}i+\binom{m-1-i}{i-1}\right) x^{m-2i}y^i =\\
& = \sum_{i=0}^{\mathclap{\lfloor m/2\rfloor}}\; (-1)^i\binom{m-i}i x^{m-2i}y^i = U_{m+1}(x,y),
}
since $\lfloor(m-1)/2\rfloor =\lfloor m/2\rfloor$ when $m$ is odd and $\binom{m-1-\lfloor m/2\rfloor}{\lfloor m/2\rfloor}=0$ when $m$ is even.
\item We proceed by induction. Denote $t:=\tr(A)$, $d:=\det(A)$ and $U_m:=U_m(t,d)$. The base case $A=A$ is true since $U_1=1$, $U_0=0$. By the Cayley-Hamilton theorem $A^2=tA-dI_2$, so if $A^m=U_mA - dU_{m-1}I_2$ then
\[A^{m+1}= AA^m=U_mA^2 - dU_{m-1} A = (tU_m - dU_{m-1})A-dU_mI_2 = U_{m+1}A-dU_mI_2\]
by the previous item.\qedhere
\eenum
\end{proof}

In some rings we can write generalized Fibonacci polynomials in terms of Chebyshev polynomials of the second kind, divisions, and square roots.
\begin{rem}[\n{\small Generalized Fibonacci polynomials as Chebyshev polynomials}]\label{Chebyshev}

Given $m\in\N$, we define the $m$th (univariate) \e{Chebyshev polynomial of the second kind} over $K$ as
\eqnum{U_m(x):=\sum_{i=0}^{\mathclap{\lfloor m/2\rfloor}}\; (-1)^i\binom{m-i}i (2x)^{m-2i}.\label{Chebyshevdef}}
Through \eqref{Chebyshevdef} we also define $U_{-1}(x):=0$. Chebyshev polynomials of the second kind satisfy the recurrence relation $U_{m+1}(x)=2xU_{m}(x)-U_{m-1}(x)$.\\
Let $K$ be a commutative unital ring in which every element is a square. If $t,d\in K$ and $d$ is a regular element, then $\sqrt d$ is regular and for all $m\in\N$ we have, from Definition \ref{Fibonacci},
\[U_m(t,d)=(\sqrt d)^{m-1} U_m(t/\sqrt d,1),\]
with the computation done in $Q(K)$ but the result lying in $K$. If in addition $K$ is free of $2$-torsion (i.e., if $2$ is a regular element of $K$) then by \eqref{Chebyshevdef} we get
\eqnum{U_m(t,d)=(\sqrt d)^{m-1}U_{m-1}(t/(2\sqrt d)), \label{(B)}}
which writes the generalized Fibonacci polynomial of order $m$ in terms of the $(m-1)$th Chebyshev polynomial of the second kind.
\end{rem}

\medskip

Over fields we can write the coefficients of the linear combination of Lemma \ref{powers}(2) in terms of the eigenvalues of the matrix.

\begin{rems}[\n{Powers through the eigenvalues}]\label{eigenvalues}
\mbox{}

In these remarks let $K$ be a field.
\enum
\item Given $A\in\Mat_2(K)$ we can also express $U_m(\tr(A),\det(A))$ in terms of its eigenvalues $\lambda_1,\lambda_2$ (possibly equal) in an algebraic closure $\ov K$ of $K$. By induction it is easily shown that, for $m\in\N^*$,
\eqnum{U_m(\tr(A),\det(A))=\sum_{i=0}^{m-1}\lambda_1^i\lambda_2^{m-i-1}.\label{(A)}}
Thus by Lemma \ref{powers}(2) (taking into account that $\det(A)=\lambda_1\lambda_2$) we can express $A^m$ in terms of the eigenvalues. This choice makes the formula dependent on the characteristic $\ch(K)$ of the field: if $\ch(K)\neq2$, the eigenvalues of $A\in\Mat_2(K)$ can be found from the characteristic polynomial by the quadratic formula, but when $\ch(K)=2$ the roots of $x^2+ax+b\in K[x]$ cannot be expressed by radicals when the polynomial is irreducible over $K$ and $a\neq0$, and a different approach is taken (see e.g. \cite[Exercise 2.4.6]{Cox}): its roots are $x_1=aR(b/a^2)$ and $x_2=x_1+a$, where $R(y)$ denotes a root of $x^2+x+y$.
\item Formula \eqref{(A)} can be simplified as follows: if $A$ is nondefective ($\lambda_1\neq\lambda_2$) then
\eqnum{U_m(\tr(A),\det(A))=\frac{\lambda_2^m-\lambda_1^m}{\lambda_2-\lambda_1},\label{(A1)}}
while if $A$ is defective ($\lambda_1=\lambda_2=:\lambda$) then
\eqnum{U_m(\tr(A),\det(A))=m\lambda^{m-1}.\label{(A2)}}
\n{Remark:} Defectiveness is easy to detect: if $\ch(K)=2$, the matrix $A\in\Mat_2(K)$ is defective if and only if $\tr(A)=0$, i.e., if and only if the characteristic polynomial is of the form $x^2+\det(A)$ (with single eigenvalue $\sqrt{\det(A)}\in\ov K$). If $\ch(K)\neq2$, by the quadratic formula the matrix is defective if and only if $\tr(A)^2-4\det(A)=0$ (with single eigenvalue $\tr(A)/2$). In any case, the matrix $A$ is defective if and only if $\tr(A)^2-4\det(A)=0$.
\eenum
In case $K$ is not a field, Formula \eqref{(A)} still holds if $\lambda_1,\lambda_2$ are two eigenvalues in some overring $\ov K$ such that the characteristic polynomial of $A$ equals $(x-\lambda_1)(x-\lambda_2)$ in $\ov K[x]$, the simplification in the defective case can always be done, and the simplification in the nondefective case can be done when $\lambda_2-\lambda_1$ is a unit of $\ov K$.
\end{rems}

\section{Continuant polynomials}\label{sectioncontinuants}

In what follows we define and study the multivariate polynomials which encode the periodicity in the formula for the determinant. We call them \e{continuant polynomials} since they are closely related to continuants: Continuant monomials of type $p$ are related to the Leibniz expansion of a continuant. Continuant polynomials of type $\alpha$ generalize the non-determinant form of a continuant given by Muir and Metzler in \cite[Item 545]{MuirMetzler1933}. Continuant polynomials of type $\beta$ and $\pi$ are generalizations included here to fit the $k$-Toeplitz case. We give rigorous, combinatorial definitions which allow us to prove several elementary but essential properties of continuant polynomials (see Lemma \ref{recurrences}).

\smallskip

Roughly speaking, given variables $x_1,\ldots,x_k$ and $y_1,\ldots,y_k$, to build the monomial $p_{r,k}(i_1,\ldots,i_m)$ we start with the product $x_1\cdots x_r$ and then for each index $i_j$ we substitute two consecutive $x$ variables in the product, $x_{i_j}$ and $x_{i_j+1}$, with the corresponding $y$ variable $y_{i_j}$ (so a $y$ variable ``weights'' like two $x$ variables), even cyclically: $x_r$ and $x_1$ can be substituted together, but with the caveat that they are not substituted by $y_r$, but by $y_k$.\footnote{This phenomenon reflects the fact that, in the periodic extension of the vector $(y_1,\ldots,y_k)$, the element ``preceding'' $y_1$ is $y_k$.} The indices are taken so that the consecutive substitutions they imply are indeed possible. Then the polynomial $\pi(r,k)$ is the sum of all $p_{r,k}$ polynomials for all possible indices, the polynomial $\alpha(r,k)$ is the sum of those $p_{r,k}$ which do not have the variable $y_k$, and the polynomial $\beta(r,k)$ is the sum of those $p_{r,k}$ which do have the variable $y_k$.

\begin{defns}[\n{Continuant polynomials}]\label{polynomials}
\mbox{}

\nin Given $r\in\Z$ we denote $[r]:=\{1,\ldots,r\}$ if $r\geq1$, $[r]:=\emptyset$ otherwise. For a finite set $S\seq\N^*$, by $\binom S{m}_2$ with $m\in\N^*$ we denote the set of all $m$-combinations of the set $S$ satisfying $|s-t|\geq2$ for all $s,t\in S$, and  by $\binom S{m}_{2c}$ the subset which applies this rule also cyclically, i.e., the subset of $\binom S{m}_2$ which excludes those combinations including both $\min(S)$ and $\max(S)$. For example \[\binom{[7]}{3}_{2c}=\{(1,3,5),(1,3,6),(1,4,6),(2,4,6),(2,4,7),(2,5,7),(3,5,7)\}.\]
We also denote $\binom S{0}_{2}:=\{0\}$ and $\binom S{0}_{2c}:=\{0\}$ (even if $S$ is empty). Given $k,r\in\N^*$ with $r\leq k+1$ and considering the ring $R:=\Z[x_1,\ldots,x_k,x_{k+1},y_1,\ldots,y_k]$, we denote
\eq{
&x'_i:=x_i\text{ for }1\leq i\leq r, \,\, x'_{r+1}:=x_1,\\
&y'_i:=y_i\text{ for }1\leq i<r, \,\,  y'_r:=y_k,}
and define the \e{continuant monomial of type $p$} of $R$ (computed inside the ring $\Z(x_1,\ldots,y_k)$)
\eqnum{p_{r,k}(i_1,\ldots,i_m):=x_1\cdots x_r\cdot\frac{y'_{i_1}}{x'_{i_1}x'_{i_1+1}}\cdots\frac{y'_{i_m}}{ x'_{i_m}x'_{i_m+1}}\label{monomials}}
for $(i_1,\ldots,i_m)\in \binom{[r]}{m}_{2c}$ with $1\leq m\leq\lfloor r/2\rfloor$. With the same Formula \eqref{monomials} and defining
\[x'_0:=1, y'_0:=x_1\]
we also extend the definition of $p_{r,k}(i_1,\ldots,i_m)$ to the case $i_1=0$, $(i_2,\ldots,i_m)\in \binom{[r]}{m-1}_{2c}$ (the second condition holding when $m>1$). So we have
\[p_{r,k}(0)=x_1\cdots x_r, \,\,\,\, p_{r,k}(0,i)=p_{r,k}(i)\,\text{ for }i\in \binom{[r]}{m}_{2c}, m\geq1.\]
In addition we define $p_{0,k}(0):=1$.\\
For example,
\eq{
&p_{6,8}(3)=x_1x_2y_3x_5x_6,\,\,
p_{6,8}(1,5)=y_1x_3x_4y_5,\\
&p_{6,8}(6)=x_2x_3x_4x_5y_8,\,\,
p_{6,6}(3,6)=x_2y_3x_5y_6,\,\,
p_{7,6}(3,7)=x_2y_3x_5x_6y_6,\\
&p_{3,4}(0)=x_1x_2x_3,\,\,
p_{3,4}(0,3)=p_{3,4}(3)=x_2y_4.
}
Now, for fixed $0\leq r\leq k$ we denote in $\Z[x_1,\ldots,x_k,y_1,\ldots,y_k]$ the sum of all the monomials $p_{r,k}$ by $\pi(r,k)$,
\eqnum{
\pi(r,k):=\sum_{m=0}^{\lfloor r/2\rfloor} \sum_{i\in\binom{[r]}{m}_{2c}} p_{r,k}(i),\label{pi}
}
the sum of those $p_{r,k}$ having degree $0$ in $y_k$ by $\alpha(r,k)$,
\eqnum{
\alpha(r,k):=\sum_{m=0}^{\lfloor r/2\rfloor} \sum_{i\in\binom{[r-1]}{m}_2} p_{r,k}(i),\label{alpha}
}
and the sum of those $p_{r,k}$ having degree $1$ in $y_k$ by $\beta(r,k)$,
\eqnum{
\beta(r,k):=\sum_{m=0}^{\mathclap{\lfloor(r-2)/2\rfloor}}\; \sum_{i\in\binom{[r-2]-\{1\}}{m}_2} p_{r,k}(i,r).\label{beta}
}
We extend the definitions to $\alpha(-1,k)=0$ through Formula \eqref{alpha} and to \linebreak $\beta(k+1,k)$ through Formula \eqref{beta}.\\
Note that $\pi(0,k)=1=\alpha(0,k)$, $\beta(0,k)=0=\beta(1,k)$ and that, for $0\leq r\leq k$,
\eqnum{
\pi(r,k)=\alpha(r,k)+\beta(r,k).\label{alphabetasumpi}
}
For example we have
\eq{\pi(4,6)=&x_1x_2x_3x_4+y_1x_3x_4+x_1y_2x_4+x_1x_2y_3+x_2x_3y_6+y_1y_3+y_2y_6,\\
\alpha(4,6)=&x_1x_2x_3x_4+y_1x_3x_4+x_1y_2x_4+x_1x_2y_3+y_1y_3, \,\, \beta(4,6)=x_2x_3y_6+y_2y_6,\\
\beta(6,5)=&x_2x_3x_4x_5y_5+y_2x_4x_5y_5+x_2y_3x_5y_5+x_2x_3y_4y_5+y_2y_4y_5.
}
We call the \e{continuant polynomials of type $\alpha$, type $\beta$, and type $\pi$} respectively to the sets of polynomials $\{\alpha(r,k)\}$, $\{\beta(r,k)\}$, $\{\pi(r,k)\}$ for all valid pairs $(r,k)$ in each case. The shift by $s$ of continuant polynomials of type $\alpha$ we write as $\alpha_s(r,k):=(\alpha(r,k))_s$.
\end{defns}

\begin{lem}[\n{Identities with continuant polynomials}]\label{recurrences}
\mbox{}

\nin Given $k\in\N^*$, in $\Z[x_1,\ldots,x_{k},y_1,\ldots,y_k]$ we have:
\enum
\item For $1\leq r\leq k+1$,
\[\beta(r,k)=y_k\alpha_1(r-2,k).\]
\item For $0\leq r\leq k-1$,
\[\alpha(r+1,k)=x_{r+1}\alpha(r,k)+y_r\alpha(r-1,k).\]
\item For $2\leq r\leq k$,
\[\beta(r+1,k)=x_{r}\beta(r,k)+y_{r-1}\beta(r-1,k).\]
\item For $1\leq r \leq k-1$ and $1\leq s\leq k-r$,
\[\alpha_{s-1}(r,k)= x_s\alpha_{s}(r-1,k)+y_s\alpha_{s+1}(r-2,k).\]
\item For $0\leq r\leq k-1$,
\[\alpha(k-1,k)\beta(r+1,k)-\alpha(r,k)\beta(k,k)=(-1)^{r+1}y_ky_1\cdots y_r\alpha_{r+1}(k-r-2,k).\]
\eenum
\end{lem}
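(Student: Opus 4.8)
The plan is to establish the five identities in the order (2), (4), (1), (3), (5), since the last and hardest is best deduced from the others. Throughout I would argue directly from the combinatorial definitions in Definitions \ref{polynomials}, reading each $\alpha(r,k)$ and $\beta(r,k)$ as a sum, over admissible index sets, of the monomials $p_{r,k}$, and I would repeatedly use that the shift $(\cdot)_s$ is an additive ring automorphism.

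For (2) I would partition the monomials of $\alpha(r+1,k)$ — built on the word $x_1\cdots x_{r+1}$ with substitution indices in $\binom{[r]}{m}_2$ — according to whether the rightmost admissible index $r$ (replacing $x_rx_{r+1}$ by $y_r$) is used: if it is not, the factor $x_{r+1}$ survives and the remaining indices form exactly $\alpha(r,k)$, giving $x_{r+1}\alpha(r,k)$; if it is, index $r-1$ becomes forbidden and the rest forms $\alpha(r-1,k)$, giving $y_r\alpha(r-1,k)$. This is the standard continuant recurrence. Identity (4) is the mirror statement obtained by splitting on the \emph{leftmost} index $1$: for $s=1$ this yields $\alpha(r,k)=x_1\alpha_1(r-1,k)+y_1\alpha_2(r-2,k)$ (the surviving $x_1$, respectively the $y_1$ forcing index $2$ forbidden, producing the shifted tails on $x_2\cdots x_r$ and on $x_3\cdots x_r$), and the general $s$ follows by applying $(\cdot)_{s-1}$ and additivity of shifts. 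Identity (1) is a bijection: every monomial of $\beta(r,k)$ uses the cyclic index $r$, which by the caveat $y'_r=y_k$, $x'_{r+1}=x_1$ contributes the factor $y_k$ and consumes $x_1$ and $x_r$; stripping $y_k$ and these two variables leaves precisely the monomials of $\alpha(r-2,k)$ shifted by $1$ (living on $x_2\cdots x_{r-1}$, with indices in $\{2,\dots,r-2\}$), so $\beta(r,k)=y_k\alpha_1(r-2,k)$. Identity (3) then drops out by inserting (1) into both sides, cancelling $y_k$, and recognizing what remains as the shift of (2) by $1$.

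The main obstacle is the bilinear identity (5). My first move would be to use (1) to write $\beta(r+1,k)=y_k\alpha_1(r-1,k)$ and $\beta(k,k)=y_k\alpha_1(k-2,k)$, factor out $y_k$, and reduce (5) to the pure-$\alpha$ statement
\[\alpha(k-1,k)\,\alpha_1(r-1,k)-\alpha(r,k)\,\alpha_1(k-2,k)=(-1)^{r+1}y_1\cdots y_r\,\alpha_{r+1}(k-r-2,k).\]
Writing $D(r)$ for the left side and $E(r)$ for the right side, I would show that both obey the \emph{same} second-order recurrence $F(r+1)=x_{r+1}F(r)+y_rF(r-1)$. For $D$ this is immediate: expanding $\alpha(r+1,k)$ by (2) and $\alpha_1(r,k)$ by the shift of (2) and collecting the $x_{r+1}$- and $y_r$-terms yields $D(r+1)=x_{r+1}D(r)+y_rD(r-1)$. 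For $E$, matching the recurrence is equivalent to $\alpha_r(k-r-1,k)=x_{r+1}\alpha_{r+1}(k-r-2,k)+y_{r+1}\alpha_{r+2}(k-r-3,k)$, which is exactly identity (4) with $s=r+1$ and first argument $k-r-1$. The two base cases check out: $r=0$ reduces via $\alpha(0,k)=1$, $\alpha(-1,k)=0$ to $D(0)=-\alpha_1(k-2,k)=E(0)$, while $r=1$ reduces to identity (4) with $s=1$ and first argument $k-1$. Induction on the common recurrence then forces $D(r)=E(r)$ for $0\le r\le k-1$, proving (5). The delicate points I expect to manage carefully are the admissible ranges of the first argument and of $s$ in each application of (2) and (4) — so that the recurrences stay valid all the way up to the top index $r=k-1$ — together with the bookkeeping of the alternating sign and of the growing product $y_1\cdots y_r$ when fitting $E$ to the recurrence.
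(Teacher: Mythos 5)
Your proposal is correct and follows essentially the same route as the paper: items (1)--(4) are proved by the identical combinatorial arguments (splitting the monomials of $\alpha$ on the rightmost, respectively leftmost, index, the strip-$y_k$ bijection for $\beta$, and the shift automorphism), and item (5) by induction on $r$ with the same two base cases ($r=0$ from item (1), $r=1$ from items (1) and (4)) and the same pivotal application of item (4) at $s=r+1$. Your only deviation --- factoring out $y_k$ via item (1) so the induction runs on the pure-$\alpha$ identity $\alpha(k-1,k)\alpha_1(r-1,k)-\alpha(r,k)\alpha_1(k-2,k)=(-1)^{r+1}y_1\cdots y_r\,\alpha_{r+1}(k-r-2,k)$, with both sides verified against the common recurrence $F(r+1)=x_{r+1}F(r)+y_rF(r-1)$ --- is a cosmetic repackaging of the paper's strong-induction step, which expands $\beta(r+1,k)$ and $\alpha(r,k)$ by items (3) and (2) with exactly those coefficients.
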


\begin{proof}
\mbox{}

\enum
\item Recall that for $1\leq r\leq k+1$ we have, by definition,
\[\alpha(r-2,k)=\sum_{m=0}^{\mathclap{\lfloor (r-2)/2\rfloor}}\; \sum_{i\in\binom{[r-3]}{m}_2} p_{r-2,k}(i).\]
Fix some $p:=p_{r-2,k}(i_1,\ldots,i_m)$ appearing as a term in the above expression of $\alpha(r-2,k)$, with indices rearranged so that $i_1<\ldots<i_m$. Then, working in $\Z(x_1,\ldots,x_{k},y_1,\ldots,y_k)$, the shift of $p$ by $1$ satisfies \[(p_{r-2,k}(0))_1=p_{r-1,k}(0)/x_1\]
if $m=0$ and
\[(p_{r-2,k}(i_1,\ldots,i_m))_1 = p_{r-1,k}(i_1+1,\ldots,i_m+1)/x_1\]
with $\left\{(i_1+1,\ldots,i_m+1) \ | \ (i_1,\ldots,i_m)\in\binom{[r-3]}{m}_2\right\}=\binom{[r-2]-\{1\}}{m}_2$ if $m\geq1$; whence
\eq{
&y_k\alpha_1(r-2,k)=\sum_{m=0}^{\mathclap{\lfloor (r-2)/2\rfloor}}\; \sum_{i\in\binom{[r-2]-\{1\}}{m}_2} p_{r-1,k}(i)y_k/x_1 =\\
&= \sum_{m=0}^{\mathclap{\lfloor (r-2)/2\rfloor}}\; \sum_{i\in\binom{[r-2]-\{1\}}{m}_2} p_{r,k}(i,k) = \beta(r,k)
}
by definition.
\item For $r=1$ we have $\alpha(2,k)=x_1x_2+y_1$, $\alpha(1,k)=x_1$, $\alpha(0,k)=1$, so indeed $\alpha(2,k)=x_2\alpha(1,k)+y_1\alpha(0,k)$. For $2\leq r\leq k-1$ consider
\[\alpha(r+1,k)=\sum_{m=0}^{\lfloor(r+1)/2\rfloor}\!\!\!\! \sum_{i\in\binom{[r]}{m}_2}\!\! p_{r+1,k}(i)\]
and fix some $p:=p_{r+1,k}(i_1,\ldots,i_m)$ appearing as a term in the above expression of $\alpha(r+1,k)$, with indices rearranged so that $i_1<\ldots<i_m$. Since $i_m<r+1$ (so $y_k$ is not a factor of $p$), we have that either
\ite
\item[--] $i_m<r$, $x_{r+1}$ is a factor of $p$ and $y_r$ is not, whence $p=x_{r+1}p_{r,k}(i_1,\ldots,i_m)$, or
\item[--] $i_m=r$, $y_r$ is a factor of $p$ and $x_{r+1}$ is not, whence $p=y_rp_{r-1,k}(i_1,\ldots,i_{m-1})$ if $m\geq 2$ and $p=y_rp_{r-1,k}(0)$ if $m=1$.
\eite
Denote
\begin{align*}
&S_1(m):=\left\{(i_1,\ldots,i_m)\in \binom{[r]}{m}_2 \ | \ i_1,\ldots,i_m<r\right\}, \\
&S_2(m):=\left\{(i_1,\ldots,i_{m-1},r)\in \binom{[r]}{m}_2\right\}
\end{align*}
and observe that $\binom{[r]}{m}_2$ is the disjoint union of $S_1$ and $S_2$ for $0\leq m\leq\lfloor(r+1)/2\rfloor$. We have
$S_1(m)=\binom{[r-1]}{m}_2$ for $0\leq m\leq\lfloor(r+1)/2\rfloor$ and $S_2(m)= \{(i,r) \ | \ i\in \binom{[r-2]}{m-1}_2\}$ for $2\leq m\leq\lfloor(r+1)/2\rfloor$; in addition, since $p_{r+1,k}(0,r)=p_{r+1,k}(r)=y_rp_{r-1,k}(0)$, we can substitute $S_2(1)$ with $\{(i,r) \ | \ i\in \binom{[r-2]}{0}_2\}$. Therefore
\eq{
\alpha(r+1,k) &\overset{(a)}{=}\sum_{m=0}^{\lfloor r/2\rfloor}\sum_{i\in\binom{[r-1]}{m}_2}x_{r+1}p_{r,k}(i) +
\!\!\!\!\sum_{m=1}^{\lfloor(r+1)/2\rfloor}\!\!\!\!\sum_{i\in\binom{[r-2]}{m-1}_2}y_rp_{r-1,k}(i) = \\
&\overset{(b)}{=} x_{r+1}\sum_{m=0}^{\lfloor r/2\rfloor}\sum_{i\in\binom{[r-1]}{m}_2}\!\!\!\! p_{r,k}(i) \, + \, y_r\!\!\!\!\!\!\!\!\sum_{m=0}^{\lfloor (r-1)/2\rfloor}\!\!\!\!\sum_{i\in\binom{[r-2]}{m}_2}\!\!\!\! p_{r-1,k}(i) =\\
&= x_{r+1}\alpha(r,k)+y_r\alpha(r-1,k),}
where to rewrite the bounds of summations we have applied:
\ite
\item In the first term at the RHS of (a), that $\lfloor(r+1)/2\rfloor = \lfloor r/2\rfloor$ when $r$ is even, while we cannot simultaneously have $m=\lfloor(r+1)/2\rfloor$ and $i_m<r$ when $r$ is odd.
\item In the second term at the RHS of (a), that we cannot simultaneously have $m=0$ and $i_m=r$.
\item In the second term at the RHS of (b), that $\lfloor(r+1)/2\rfloor-1= \lfloor(r-1)\rfloor/2$.
\eite
\item Using item (1) and the fact that the shift by $1$ is an automorphism of the polynomial ring we get, for $2\leq r\leq k$,
\eq{
&\beta(r+1,k)=y_k\alpha_1(r-1,k)=y_k(x_{r-1}\alpha(r-2,k)+y_{r-2}\alpha(r-3,k))_1 = \\
&=(x_{r-1})_1y_k\alpha_1(r-2,k)+(y_{r-2})_1y_k\alpha_1(r-3,k) = x_r\beta(r,k)+y_{r-1}\beta(r-1,k).
}
\item With arguments similar to those of item (2), applied now to the variables of lowest index, we are going to show that
\[\alpha(r,k)=x_1\alpha_1(r-1,k)+y_1\alpha_2(r-2,k).\]
From this, since the shift by $s-1$ is an automorphism of the polynomial ring, and shifts are additive, we get the desired consequence:
\eq{\alpha_{s-1}(r,k)&=(x_1)_{s-1}(\alpha_1(r-1,k))_{s-1}+(y_1)_{s-1}(\alpha_2(r-2,k))_{s-1} = \\ &=x_s\alpha_s(r-1,k)+y_s\alpha_{s+1}(r-2,k).}
We expose the arguments with less detail than in item (2). Recall that
\[\alpha(r,k)=\sum_{m=0}^{\lfloor r/2\rfloor}\sum_{i\in\binom{[r-1]}{m}_2}\!\! p_{r,k}(i)\]
and fix some $p:=p_{r,k}(i_1,\ldots,i_m)$ with $i_1<\cdots<i_m$. Then either
\ite
\item[--] $i_1\neq1$, $x_{1}$ is a factor of $p$ (since $r\geq1$) and $y_1$ is not, whence \\$p=x_1(p_{r-1,k}(i_1-1,\ldots,i_m-1))_1$ if $i_1>0$ and \\$p=x_1(p_{r-1,k}(0,i_2-1,\ldots,i_m-1))_1$ if $i_1=0$, or
\item[--] $i_1=1$, $y_1$ is a factor of $p$ and $x_{1}$ is not, whence \\$p=y_1(p_{r-2,k}(i_2-2,\ldots,i_m-2))_2$ if $m\geq 2$ and $p=y_1(p_{r-2,k}(0))_2$ if $m=1$.
\eite
For $i=(i_1,\ldots,i_m)\in\binom{S}m_2$ with $i_1<\cdots<i_m$ and $z\in\N$, $i_1\geq z$ or $i_1=0$ and $i_2\geq z$ (or $m=1$), we define $i-z:=(i_1-z,\ldots,i_m-z)$ when $i_1\geq z$ and $i-z:=(0,i_2-z,\ldots,i_m-z)$ when $i_1=0$ and $i_2\geq z$.
Then
\eq{
&\alpha(r,k)=\sum_{m=0}^{\lfloor r/2\rfloor}\sum_{i\in\binom{[r-1]-\{1\}}{m}_2}\!\!\!\!\!\!\!\!x_1(p_{r-1,k}(i-1))_1 +
\sum_{m=1}^{\lfloor r/2\rfloor}\sum_{i\in\binom{[r-1]-\{1,2\}}{m-1}_2}\!\!\!\!\!\!\!\!y_1(p_{r-2,k}(i-2))_2 = \\
&= x_1\left(\sum_{m=0}^{\lfloor (r-1)/2\rfloor}\sum_{i\in\binom{[r-2]}{m}_2}\!\!\!\! p_{r-1,k}(i)\right)_1 \, + \, y_1\!\left(\sum_{m=0}^{\lfloor (r-2)/2\rfloor}\!\!\!\!\sum_{i\in\binom{[r-3]}{m}_2}\!\!\!\! p_{r-2,k}(i)\right)_2 = \\
& = x_1\alpha_1(r-1,k)+y_1\alpha_2(r-2,k).}

\item Fixed $k$ we proceed by strong induction on $r$. We need two base cases. First, for $r=0$ we have $\beta(k,k)=y_k\alpha_1(k-2,k)$ by item (1). Therefore, since $\beta(1,k)=0$ and $\alpha(0,k)=1$, this shows
\[\alpha(k-1,k)\beta(1,k)-\alpha(0,k)\beta(k,k)=(-1)^1y_k\alpha_1(k-2,k).\]
Second, for $r=1$ we have $k-1\geq1$ and then items (1,4) and the fact that $\beta(2,k)=y_k$ allow us to show that
\eq{
&\alpha(k-1,k)\beta(2,k)-\alpha(1,k)\beta(k,k) = \alpha(k-1,k)y_k-x_1y_k\alpha_1(k-2,k) = \\
=&\, y_k(\alpha(k-1,k)-x_1\alpha_1(k-2,k)) = y_ky_1\alpha_2(k-3,k).
}
Now pick $2\leq r\leq k-1$ and suppose that
\[\alpha(k-1,k)\beta(r',k)-\alpha(r'-1,k)\beta(k,k)=(-1)^{r'}y_ky_1\cdots y_{r'-1}\alpha_{r'}(k-r'-1,k)\]
for $r'\in \{r,r-1\}$. Then by items (2,3), the induction hypothesis, and item (4),
\eq{
&\alpha(k-1,k)\beta(r+1,k)-\alpha(r,k)\beta(k,k)=\\
=&\alpha(k-1,k)(x_r\beta(r,k)+y_{r-1}\beta(r-1,k))-(x_r\alpha(r-1,k)+y_{r-1}\alpha(r-2,k))\beta(k,k) =\\ =&x_r(\alpha(k-1,k)\beta(r,k)-\alpha(r-1,k)\beta(k,k))+y_{r-1}(\alpha(k-1,k)\beta(r-1,k)-\alpha(r-2,k)\beta(k,k)) =\\
=&x_r(-1)^ry_ky_1\cdots y_{r-1}\alpha_r(k-r-1,k) + y_{r-1}(-1)^{r-1}y_ky_1\cdots y_{r-2}\alpha_{r-1}(k-r,k) =\\
=&(-1)^{r+1}y_ky_1\cdots y_{r-1}(\alpha_{r-1}(k-r,k) - x_r\alpha_r(k-r-1,k)) = \\
=&(-1)^{r+1}y_ky_1\cdots y_{r-1}y_r\alpha_{r+1}(k-r-2,k).\qedhere
}
\eenum
\end{proof}

\begin{defn}[\n{Continuant polynomials of type $\alpha$}]
\mbox{}

\nin Inside $\Z[x_1,\ldots,x_k,y_1,\ldots,y_k]$, we extend the definition of polynomials of type $\alpha$ to $\alpha(n,k)$ for $n>k$ through the recurrence relation of Lemma \ref{recurrences}(2), denoting $x_{k+i}:=x_i$ for all $i\in\N^*$. For example, we have
\[\alpha(4,3)=x_1\alpha(3,3)+y_3\alpha(2,3) = x_1^2x_2x_3 + x_1y_1x_3 + x_1^2y_2 + x_1x_2y_3 + y_1y_3.\]
\end{defn}

\section{Determinant}\label{sectiondeterminant}

\begin{defn}[\n{Universal determinants}]
\mbox{}

\nin Given $k,n\in\N^*$ we call \e{the universal determinant of tridiagonal $k$-Toeplitz matrices of size $n$} to the polynomial $D(n,k)\in \Z[x_1,\ldots,x_k,y_1,\ldots,y_k]$ defined by
\[D(n,k):=\det(T^k_n(x_1,\ldots,x_k,1,\ldots,1,y_1,\ldots,y_k)).\]
We also define $D(0,k):=1, D(-1,k):=0$.\\
The shift by $s$ of universal determinants we write as $D_s(n,k):=(D(n,k))_s$.
\end{defn}

\begin{lem}[\n{Identities of universal determinants}]\label{detrecurrences}
\mbox{}

\nin Consider $\Z[x_1,\ldots,x_k,y_1,\ldots,y_k]$. Then:
\enum
\item For $n\in \N^*$,
\[D(n,k)=x_nD(n-1,k)-y_{n-1}D(n-2,k).\]
\item For $n\in \N^*$ and $s\in\N$,
\[D_s(n,k)=x_{s+1}D_{s+1}(n-1,k)-y_{s+1}D_{s+2}(n-2,k).\]
\eenum
\end{lem}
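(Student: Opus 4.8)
The plan is to prove Lemma \ref{detrecurrences} by Laplace (cofactor) expansion of the determinant defining $D(n,k)$, exploiting the tridiagonal structure. For item (1), I would start from the matrix $T^k_n(x_1,\ldots,x_k,\ov 1,y_1,\ldots,y_k)$, whose main diagonal is generated by $\ov x$, whose upper diagonal is all $1$'s, and whose lower diagonal is generated by $\ov y$. The key point is that the entry in position $(n,n)$ is $x_n$ (understood via periodic extension, $x_n=x_{((n-1)\bmod k)+1}$), the entry in position $(n-1,n)$ is $1$, and the entry in position $(n,n-1)$ is $y_{n-1}$. Expanding $\det$ along the last row, only the $(n,n)$ and $(n,n-1)$ entries survive because $T$ is tridiagonal. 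The $(n,n)$ cofactor is $\det$ of the top-left $(n-1)\times(n-1)$ submatrix, which is exactly $T^k_{n-1}$ with the same diagonals, giving $D(n-1,k)$. For the $(n,n-1)$ cofactor, I would delete row $n$ and column $n-1$; the resulting matrix is block lower triangular with the top-left block being $T^k_{n-2}$ (contributing $D(n-2,k)$) and a lower-right $1\times1$ block equal to the surviving superdiagonal entry $1$, so by Theorem \ref{blockdeterminant} the minor is $D(n-2,k)$, and the sign $(-1)^{n+(n-1)}=-1$ together with the lower entry $y_{n-1}$ yields the term $-y_{n-1}D(n-2,k)$.

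For the boundary cases I would simply check that the conventions $D(0,k)=1$ and $D(-1,k)=0$ make the relation hold at $n=1$ and are consistent: at $n=1$ the formula reads $D(1,k)=x_1D(0,k)-y_0D(-1,k)=x_1$, matching $\det(x_1)=x_1$ since the empty product/sum conventions and $D(-1,k)=0$ kill the second term regardless of how $y_0$ is interpreted. At $n=2$ it gives $x_2 x_1 - y_1$, which is indeed $\det\begin{pmatrix}x_1 & 1\\ y_1 & x_2\end{pmatrix}$.

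For item (2), the cleanest route is to deduce it formally from item (1) by applying the shift automorphism $(\,\cdot\,)_s$ of $\Z[x_1,\ldots,x_k,y_1,\ldots,y_k]$. Since $D_s(n,k)=(D(n,k))_s$ by definition, and shifts are ring automorphisms (hence commute with the ring operations appearing in the recurrence) and additive, applying $(\,\cdot\,)_s$ to the identity $D(n,k)=x_nD(n-1,k)-y_{n-1}D(n-2,k)$ gives $D_s(n,k)=(x_n)_s D_s(n-1,k)-(y_{n-1})_s D_s(n-2,k)$. Using the periodic indexing convention $(x_n)_s=x_{s+n}$ and $(y_{n-1})_s=y_{s+n-1}$, together with additivity $(D_s(\,\cdot\,,k))$ matching $(D(\,\cdot\,,k))_s$, this rewrites the recurrence with shifted indices. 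I would then observe that to land it in the stated form $D_s(n,k)=x_{s+1}D_{s+1}(n-1,k)-y_{s+1}D_{s+2}(n-2,k)$ it is cleaner to re-derive it directly: expand $\det(T^k_n(\sigma_s(\ov x),\ov 1,\sigma_s(\ov y)))$ along the \emph{first} row instead, so that the leading entry $x_{s+1}$, the superdiagonal entry $1$, and the subdiagonal entry $y_{s+1}$ produce the three terms, with the two cofactors being the shifted determinants $D_{s+1}(n-1,k)$ and $D_{s+2}(n-2,k)$.

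The main obstacle I anticipate is purely bookkeeping rather than conceptual: ensuring the periodic index conventions and the chosen expansion direction (last row versus first row) are matched so that the shifted subscripts come out exactly as $s+1$ and $s+2$ in item (2), and verifying that the second cofactor's minor really is block triangular with a harmless $1\times 1$ corner block rather than introducing an unexpected extra factor or sign. Since the off-diagonal entries in $T^k_n(\ov x,\ov 1,\ov y)$ are normalized to $1$ on the superdiagonal, the block-triangular argument via Theorem \ref{blockdeterminant} keeps the minors clean, and the only genuine care is the $(-1)$ sign from the cofactor position, which supplies the minus sign in front of the $y$ term.
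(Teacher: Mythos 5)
Your proposal is correct and follows essentially the same route as the paper: item (1) via Laplace expansion exploiting tridiagonality (the paper expands along the last column and then the last row of the auxiliary minor, while you expand along the last row and dispose of the second minor via the block-triangular determinant --- trivially equivalent computations), and item (2) via the shift viewpoint, where your direct first-row expansion of $\det(T^k_n(\sigma_s(\ov x),\ov 1,\sigma_s(\ov y)))$ is the same calculation as the paper's expansion of the $s=0$ case along the first column and row followed by the shift automorphism. Your observation that naively shifting the item-(1) recurrence produces a different (also valid) identity, $D_s(n,k)=x_{s+n}D_s(n-1,k)-y_{s+n-1}D_s(n-2,k)$, and that the stated form therefore requires expansion at the top of the matrix rather than the bottom, is exactly the right adjustment.
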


\newpage
\begin{proof}
\mbox{}

\enum
\item For $n\in\{1,2\}$ the result is true. For $n\geq3$, applying Laplace's expansion to $D(n,k)$ along the last column,
\[\vma
x_1 & 1 & &\\
y_1 & \ddots & \ddots \\
&\ddots& x_{n-1} & \bm{1}\\
 & & b_{n-1} & \bm{x_n}
\evma = x_n\vma
x_1 & 1 & &\\
y_1 & \ddots & \ddots \\
&\ddots& x_{n-2} & 1\\
 & & y_{n-2} & x_{n-1}
 \evma -\vma
x_1 & 1 & &\\
y_1 & \ddots & \ddots \\
&\ddots& x_{n-2} & 1\\
 & & 0 & y_{n-1}
\evma,\]
we get $D(n,k)= x_nD(n-1,k)-D'(n,k)$, with the auxiliary determinant $D'$ satisfying $D'(n,k)=y_{n-1}D(n-2,k)$ by Laplace's expansion along its last row:
\[\vma
x_1 & 1 & &\\
y_1 & \ddots & \ddots \\
&\ddots& x_{n-2} & 1\\
 & & \bm0 & \bm{y_{n-1}}
\evma = y_{n-1}\vma
x_1 & 1 & &\\
y_1 & \ddots & \ddots \\
&\ddots& x_{n-3} & 1\\
 & & y_{n-3} & x_{n-2}
\evma.\]
\item An argument similar to the previous one, applying Laplace's expansion to $D(n,k)$ along the first column, then Laplace's expansion to the resulting auxiliary determinant along the first row, gives
\[D(n,k)=x_1D_1(n-1,k)-y_1D_2(n-2,k).\]
Now, since the shift by $s$ is an automorphism of the polynomial ring, and shifts are additive, we get
$D_{s}(n,k)=(x_1)_{s}(D_1(n-1,k))_{s}-(y_1)_{s}(D_2(n-2,k))_{s} = x_{s+1}D_{s+1}(n-1,k)-y_{s+1}D_{s+2}(n-2,k).$\qedhere
\eenum
\end{proof}

The determinant of any tridiagonal $k$-Toeplitz matrix is an evaluation of a universal determinant.

\begin{lem}\label{reducevariables}
Let $K$ be a commutative unital ring. Given $k,n\in\N^*$ and $\ov a, \ov b:=(b_1,\ldots,b_k), \ov c:=(c_1,\ldots,c_k)\in K^k$, put $d_i:=b_ic_i$ for $1\leq i\leq k$ and $\ov d:=(d_1,\ldots,d_k)$. Then $\det(T^k_n(\ov a,\ov b,\ov c))=D^{\ov a,\ov d}(n,k)$.
\end{lem}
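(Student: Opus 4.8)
The plan is to show that the two sequences $n\mapsto\det(T^k_n(\ov a,\ov b,\ov c))$ and $n\mapsto D^{\ov a,\ov d}(n,k)$, with values in $K$, satisfy the same second-order linear recurrence with the same initial data, whence they agree for every $n\in\N^*$ by induction. First I would establish the recurrence for a tridiagonal $k$-Toeplitz matrix,
\[\det(T^k_n(\ov a,\ov b,\ov c)) = a_n\det(T^k_{n-1}(\ov a,\ov b,\ov c)) - b_{n-1}c_{n-1}\det(T^k_{n-2}(\ov a,\ov b,\ov c))\qquad(n\geq3),\]
with periodic indexing of $\ov a,\ov b,\ov c$. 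This is obtained exactly as in the proof of Lemma \ref{detrecurrences}(1): Laplace expansion of $\det(T^k_n)$ along the last column isolates the two nonzero entries $a_n$ at position $(n,n)$ and $b_{n-1}$ at position $(n-1,n)$; the first cofactor is $\det(T^k_{n-1}(\ov a,\ov b,\ov c))$, since the leading principal submatrix of order $n-1$ of a tridiagonal $k$-Toeplitz matrix is again $T^k_{n-1}(\ov a,\ov b,\ov c)$, while a second Laplace expansion of the remaining minor along its last row produces the factor $c_{n-1}\det(T^k_{n-2}(\ov a,\ov b,\ov c))$, the signs combining to $-b_{n-1}c_{n-1}$. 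Laplace expansion is valid over any commutative unital ring, so this identity holds over $K$.

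Writing $d_i:=b_ic_i$, the recurrence reads $\det(T^k_n)=a_n\det(T^k_{n-1})-d_{n-1}\det(T^k_{n-2})$. On the other hand, evaluating the polynomial identity of Lemma \ref{detrecurrences}(1) at $x_i\mapsto a_i$ and $y_i\mapsto d_i$ (with periodic indices) gives
\[D^{\ov a,\ov d}(n,k) = a_n D^{\ov a,\ov d}(n-1,k) - d_{n-1}D^{\ov a,\ov d}(n-2,k),\]
so both sequences obey the identical recurrence. It then suffices to match the base cases $n=1,2$: one has $T^k_1(\ov a,\ov b,\ov c)=(a_1)$ with determinant $a_1$, agreeing with $D(1,k)=x_1$, and $\det(T^k_2(\ov a,\ov b,\ov c))=a_1a_2-b_1c_1=a_1a_2-d_1$, agreeing with $D(2,k)=x_1x_2-y_1$ under the evaluation. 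Induction on $n$ then yields $\det(T^k_n(\ov a,\ov b,\ov c))=D^{\ov a,\ov d}(n,k)$ for all $n\in\N^*$.

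I expect no serious obstacle; the only points needing care are the bookkeeping of periodic indices (so that the entries $a_n,b_{n-1},c_{n-1}$ selected by the expansion correspond to the periodically extended variables $x_n,y_{n-1}$ of Lemma \ref{detrecurrences}(1)) and checking that the sign in the double Laplace expansion collapses to $-b_{n-1}c_{n-1}$. An alternative, recurrence-free route makes the statement transparent: in the Leibniz expansion of $\det(T^k_n(\ov a,\ov b,\ov c))$ only permutations that are products of disjoint adjacent transpositions $(i,i+1)$ and fixed points contribute, each fixed point $i$ contributing $a_i$ and each transposition contributing $-b_ic_i=-d_i$; the resulting polynomial in the $a_i$ and $d_i$ is manifestly the one obtained from $D(n,k)$ by setting $y_i=d_i$, since there the unit upper diagonal turns each transposition factor into $1\cdot y_i$, giving the identity directly.
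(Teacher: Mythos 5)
Your proof is correct, but it takes a genuinely different route from the paper's. The paper proves Lemma \ref{reducevariables} without any recurrence: working in the universal ring of rational functions $\Z(x_1,\ldots,x_k,y_1,\ldots,y_k,z_1,\ldots,z_k)$, it conjugates $T^k_n(\ov x,\ov y,\ov z)$ by the diagonal matrix with entries $Y_i:=\prod_{j=1}^{i-1}y_j$ (dividing column $i$ by $Y_i$ and multiplying row $i$ by $Y_i$ preserves the determinant), which turns the upper diagonal into all $1$'s and the lower diagonal into the products $y_iz_i$; this yields the polynomial identity $\det(T^k_n(\ov x,\ov y,\ov z))=D^{\ov x,\ov w}(n,k)$ with $\ov w:=(y_1z_1,\ldots,y_kz_k)$, which then specializes to any commutative unital $K$ by evaluation --- the detour through the fraction field is precisely what legitimizes the divisions, which are unavailable over a general $K$. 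You instead argue entirely inside $K$: both sequences satisfy the same second-order recurrence $z_n=a_nz_{n-1}-d_{n-1}z_{n-2}$ --- for $\det(T^k_n(\ov a,\ov b,\ov c))$ by a double Laplace expansion (valid over any commutative ring; your sign bookkeeping and the observation that the leading principal submatrix of $T^k_n$ is $T^k_{n-1}$ are both right), and for $D^{\ov a,\ov d}(n,k)$ by evaluating the polynomial identity of Lemma \ref{detrecurrences}(1) --- and the base cases $n=1,2$ match, so induction finishes. Notably, this uniqueness-of-recurrence-solutions argument is exactly the one the paper deploys \emph{immediately after} this lemma to deduce \eqref{generaldeterminant}, so your route is squarely in the paper's spirit even though it differs from its proof of this particular statement. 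The trade-off: the paper's scaling trick is induction-free and makes it structurally transparent why only the products $d_i=b_ic_i$ can matter, at the price of the universal fraction-field detour; your argument needs no divisions or auxiliary rings and stays elementary, at the cost of an induction and of redoing the Laplace expansion for general $\ov b,\ov c$. Your closing Leibniz sketch (only permutations built from fixed points and disjoint adjacent transpositions survive, each transposition contributing $-b_ic_i$ in $T^k_n(\ov a,\ov b,\ov c)$ versus $-1\cdot y_i$ in $D(n,k)$) is also correct and gives a third, fully division-free proof in the same combinatorial vein as the paper's proof of Theorem \ref{blockdeterminant}.
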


\begin{proof}
Consider the ring $R:=\Z(x_1,\ldots,x_k,y_1,\ldots,y_k,z_1,\ldots,z_k)$ and the matrix $T^k_n(x_1,\ldots,x_k,y_1,\ldots,y_k,z_1,\ldots,z_k)$. Let $\vp_i$ denote the operation on $A\in\Mat_n(R)$ consisting on first dividing the $i$th column of $A$ by $Y_i:=\prod_{j=1}^{i-1} y_i$, then multiplying the $i$th row of the resulting matrix by $Y_i$. We have $\det(A)=\det(\vp_i(A))$ for all $1\leq i\leq n$.  Then $\det(A)=\det(\vp_n(\vp_{n-1}(\cdots(\vp_2(A))\cdots)))$ implies
\[\det(T^k_n(x_1,\ldots,x_k,y_1,\ldots,y_k,z_1,\ldots,z_k)) = D^{\ov x,\ov w}(n,k)\]
with $\ov x:=(x_1,\ldots,x_k)$, $\ov w:=(y_1z_1,\ldots, y_kz_k)$. Therefore, by evaluation,\\
$\det(T^k_n(\ov a,\ov b,\ov c))=D^{\ov a,\ov d}(n,k)$.
\end{proof}

The solution over $K$ of the $n$th term of the $k$-periodic linear recurrence equation of second order $z_n=a_nz_{n-1}-b_{n-1}z_{n-2}$ with $a_n,b_n\in K$, $n\in\N^*$, $a_{k+i}=a_i, b_{k+i}=b_i$ for $i\in\N^*$, and initial conditions $z_{-1}:=0, z_0:=1$ is unique; since this recurrence relation is satisfied by both the evaluation $D^{\ov a,\ov b}(n,k)$ of the universal determinant of $k$-Toeplitz matrices of size $n$ (Lemma \ref{detrecurrences}(1)) and the evaluation $\alpha^{\ov a,\sminus\ov b}(n,k)$ of the polynomial of type $\alpha$ (Lemma \ref{recurrences}(2)), we get
\eqnum{D^{\ov a,\ov b}(n,k)=\alpha^{\ov a,\sminus\ov b}(n,k)\label{generaldeterminant}}
for all $n,k\in\N$. When $n\leq k$, this identity (through Lemma \ref{reducevariables}) gives the determinant of a general tridiagonal matrix. The next theorem shows that when $n>k$ the determinant of a tridiagonal $k$-Toeplitz matrix can, in addition, be written as a linear combination of two of the first $2k$ continuant polynomials of type $\alpha$ (the evaluations of $\alpha(0,k),\ldots,\alpha(2k-1,k)$), or equivalently, of the first $k$ continuant polynomials and first $k$ shifted continuant polynomials of type $\alpha$; a linear combination whose coefficients are, in essence, evaluations of generalized Fibonacci polynomials.

\begin{thm}[\n{Determinant of a tridiagonal $k$-Toeplitz matrix}]\label{determinant}
\mbox{}

\nin Let $K$ be a commutative unital ring. Given $k<n\in\N^*$ and $\ov a, \ov b:=(b_1,\ldots,b_k)$, $\ov c:=(c_1,\ldots,c_k)\in K^k$, put $d_i:=b_ic_i$ for $1\leq i\leq k$, $\ov d:=(d_1,\ldots,d_k)$ and $d:=d_1\cdots d_k$. Denote $U(i):=U_i(\pi^{\ov a,\sminus\ov d}(k,k),d)$ for $i\in\N$ and write $n=mk+r$ by Euclidean division. Then
\eqnum{
\det(T^k_n(\ov a,\ov b,\ov c)) = & \, U(m)\alpha^{\ov a,\sminus\ov d}(k+r,k) -dU(m-1)\alpha^{\ov a,\sminus\ov d}(r,k) \label{det1}\\
= & \, U(m+1)\alpha^{\ov a,\sminus\ov d}(r,k)+d_kd_1\cdots d_rU(m)\alpha_{r+1}^{\ov a,\sminus\ov d}(k-r-2,k) \label{det2}.
}
In particular, if $r=k-1$ then $\det(T^k_n(\ov a,\ov b,\ov c))$ is a multiple of $\det(T^k_{k-1}(\ov a,\ov b,\ov c))$:
\eqnum{\det(T^k_{mk+k-1}(\ov a,\ov b,\ov c))=U(m+1)\alpha^{\ov a,\sminus\ov d}(k-1,k) = U(m+1)\det(T^k_{k-1}(\ov a,\ov b,\ov c)).\label{det3}}
\end{thm}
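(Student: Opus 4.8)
The plan is to reduce the determinant to a second-order linear recurrence with periodic coefficients and then encode its propagation over one full period into a single $2\times2$ matrix, whose powers are governed by Lemma \ref{powers}(2). By Lemma \ref{reducevariables} together with \eqref{generaldeterminant}, the sequence $z_n:=\det(T^k_n(\ov a,\ov b,\ov c))=\alpha^{\ov a,\sminus\ov d}(n,k)$ satisfies, by Lemma \ref{detrecurrences}(1) evaluated at $x_i\mapsto a_i$, $y_i\mapsto d_i$, the recurrence $z_n=a_nz_{n-1}-d_{n-1}z_{n-2}$ with $z_{-1}=0$, $z_0=1$ (all indices read periodically, so $d_0=d_k$). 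First I would rewrite this as $\binom{z_n}{z_{n-1}}=M_n\binom{z_{n-1}}{z_{n-2}}$ with the transfer matrix $M_n:=\left(\begin{smallmatrix}a_n&-d_{n-1}\\1&0\end{smallmatrix}\right)$, so that $M_{n+k}=M_n$, and introduce the one-period matrix $P:=M_k\cdots M_1$ and the tail $Q_r:=M_r\cdots M_1$. Writing $n=mk+r$ and grouping into $m$ full periods and a tail, periodicity yields $\binom{z_n}{z_{n-1}}=Q_r\,P^m\binom10$.

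The crux is computing $\det P$ and $\tr P$. Since $\det M_i=d_{i-1}$, one gets $\det P=\prod_{i=1}^{k}d_{i-1}=d_kd_1\cdots d_{k-1}=d$. For the trace I would identify the two columns of $P$ with the fundamental solutions of the recurrence: the first column is $\binom{z_k}{z_{k-1}}$, and the second is $\binom{w_k}{w_{k-1}}$, where $w_n$ solves the same recurrence with $w_{-1}=1$, $w_0=0$. Matching recurrences and initial conditions against the shifted sequence $\alpha_1^{\ov a,\sminus\ov d}(n,k)$ (which obeys $v_n=a_{n+1}v_{n-1}-d_nv_{n-2}$, $v_{-1}=0,v_0=1$ by shifting Lemma \ref{recurrences}(2)) shows $w_n=-d_k\,\alpha_1^{\ov a,\sminus\ov d}(n-1,k)$, so by Lemma \ref{recurrences}(1) the lower-right entry is $w_{k-1}=-d_k\alpha_1^{\ov a,\sminus\ov d}(k-2,k)=\beta^{\ov a,\sminus\ov d}(k,k)$. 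Hence, using \eqref{alphabetasumpi}, $\tr P=\alpha^{\ov a,\sminus\ov d}(k,k)+\beta^{\ov a,\sminus\ov d}(k,k)=\pi^{\ov a,\sminus\ov d}(k,k)$. Thus $U(i)=U_i(\tr P,\det P)$, and Lemma \ref{powers}(2) gives $P^m=U(m)\,P-d\,U(m-1)\,I_2$.

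To obtain \eqref{det1} I would substitute this into $\binom{z_n}{z_{n-1}}=Q_rP^m\binom10$ and read off the top entry: $Q_r\binom10$ has top entry $z_r=\alpha^{\ov a,\sminus\ov d}(r,k)$, while $Q_rP\binom10=Q_r\binom{z_k}{z_{k-1}}=M_{k+r}\cdots M_{k+1}\binom{z_k}{z_{k-1}}$ (again by periodicity) has top entry $z_{k+r}=\alpha^{\ov a,\sminus\ov d}(k+r,k)$, giving exactly \eqref{det1}. To pass to \eqref{det2} I would write $-dU(m-1)=U(m+1)-\pi^{\ov a,\sminus\ov d}(k,k)U(m)$ via Lemma \ref{powers}(1), so that the coefficient of $U(m)$ becomes $z_{k+r}-\pi^{\ov a,\sminus\ov d}(k,k)z_r$. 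Expanding the addition formula $z_{k+r}=z_rz_k+w_rz_{k-1}$ (coming from the matrix product $Q_r\binom{z_k}{z_{k-1}}$) and cancelling $z_rz_k$ against $\alpha^{\ov a,\sminus\ov d}(k,k)z_r$, this difference reduces to $-d_k$ times the evaluation of $\alpha(k-1,k)\alpha_1(r-1,k)-\alpha(r,k)\alpha_1(k-2,k)$, which is precisely Lemma \ref{recurrences}(5) after dividing its $\beta$-terms by $y_k$ through Lemma \ref{recurrences}(1); evaluating at $y_i\mapsto-d_i$ turns the sign $(-1)^{r+1}$ and the monomial $y_1\cdots y_r$ into $d_kd_1\cdots d_r$ and delivers \eqref{det2}. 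Finally \eqref{det3} is the case $r=k-1$, where $k-r-2=-1$ makes the second summand vanish because $\alpha(-1,k)=0$, and $\alpha^{\ov a,\sminus\ov d}(k-1,k)=\det(T^k_{k-1}(\ov a,\ov b,\ov c))$ by \eqref{generaldeterminant}.

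The main obstacle I anticipate is the trace computation together with the addition-formula step: forcing $\tr P$ and the difference $z_{k+r}-\pi^{\ov a,\sminus\ov d}(k,k)z_r$ into the exact continuant-polynomial shape demanded by the statement is where the combinatorial identities of Lemma \ref{recurrences} (especially (1) and (5)) are indispensable, and where care with the periodic indexing ($d_0=d_k$, the cyclic shift $\alpha_1$, and the boundary conventions $\alpha(-1,k)=0$, $\alpha(0,k)=1$) must be exercised to keep the signs and the empty products $d_1\cdots d_r$ correct across the edge cases $r=0$ and $r=k-1$.
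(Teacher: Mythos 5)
Your proposal is correct and takes essentially the same route as the paper's proof: the transfer-matrix recursion $\binom{z_n}{z_{n-1}}=Q_rP^m\binom{1}{0}$, the identifications $\tr P=\pi^{\ov a,\sminus\ov d}(k,k)$ and $\det P=d$, Lemma \ref{powers}(2) to parametrize $P^m$, and Lemma \ref{recurrences}(5) (via Lemma \ref{recurrences}(1)) to pass from \eqref{det1} to \eqref{det2}. Your two small deviations are sound and merely cosmetic: you pin down the entries of $P$ and $Q_r$ by matching fundamental solutions of the recurrence, where the paper runs an equivalent induction showing $A(s)$ has $\alpha,\beta$ entries via Lemma \ref{recurrences}(2,3), and you read off $z_{k+r}$ directly by periodicity of the transfer matrices, where the paper instead specializes its intermediate formula to $m=1$ to obtain $\alpha(k+r,k)=\alpha(k,k)\alpha(r,k)+\alpha(k-1,k)\beta(r+1,k)$ (its Corollary \ref{secondperiod}).
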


\begin{proof}
By Lemma \ref{reducevariables} we have $\det(T^k_n(\ov a,\ov b,\ov c))=D^{\ov a,\ov d}(n,k)$. Put $a_0:=a_k, d_0:=d_k, d_{-1}:=d_{k-1}$. From the recurrence relation of universal determinants (Lemma \ref{detrecurrences}(1)) we get the $k$-periodic second-order linear recurrence equation
\eqnum{D^{\ov a,\ov d}(n,k)=a_rD^{\ov a,\ov d}(n-1,k)-d_{r-1}D^{\ov a,\ov d}(n-2,k),\label{(proof0)}}
which can be written in matrix form as
\eqnum{\pma D^{\ov a,\ov d}(n,k) \\ D^{\ov a,\ov d}(n-1,k)\epma = \pma a_r & -d_{r-1}\\ 1 & 0\epma\pma D^{\ov a,\ov d}(n-1,k) \\ D^{\ov a,\ov d}(n-2,k)\epma. \label{(proof1)}}
Define $A_s:=\pma a_s & -d_{s-1}\\ 1 & 0\epma$ for $0\leq s\leq k$ (note $A_0=A_k$), which has $\det(A_s)=d_{s-1}$, and $A:=A_kA_{k-1}\cdots A_1$.
Observe that we also have
\[\pma D^{\ov a,\ov d}(1,k) \\ D^{\ov a,\ov d}(0,k)\epma = \pma a_1 \\ 1\epma = \pma a_1 & -d_k\\ 1 & 0\epma\pma 1 \\ 0\epma = A_1\pma 1 \\ 0\epma.\]
Therefore, by recursion on \eqref{(proof1)}, for $n=mk+r$ we have
\eqnum{\pma D^{\ov a,\ov d}(n,k) \\ D^{\ov a,\ov d}(n-1,k)\epma = A_rA_{r-1}\cdots A_1 \cdot A^m\pma 1 \\ 0\epma, \label{(proof2)}}
understanding $A_r\cdots A_1=I_2$ when $r=0$.
Denote $A(s):=A_s\cdots A_1$ for $1\leq s\leq k$ (note $A(k)=A$). Let us show by induction on $s$ that
\eqnum{A(s)= \pma \alpha^{\ov a,\sminus\ov d}(s,k) & \beta^{\ov a,\sminus\ov d}(s+1,k)\\ \alpha^{\ov a,\sminus\ov d}(s-1,k) & \beta^{\ov a,\sminus\ov d}(s,k)\epma. \label{(proof3)}}
From Definitions \ref{polynomials}, in the ring $\Z[x_1,\ldots,x_{k+1},y_1,\ldots,y_k]$ we compute $\beta(2,k)=y_k$, $\pi(1,k)=x_1$, $\beta(1,k)=0$, $\alpha(1,k)=x_1$, $\alpha(0,k)=1$, so
\[\pma \alpha^{\ov a,\sminus\ov d}(1,k) & \beta^{\ov a,\sminus\ov d}(2,k)\\ \alpha^{\ov a,\sminus\ov d}(0,k) & \beta^{\ov a,\sminus\ov d}(1,k)\epma = \pma a_1 & -d_k\\ 1 & 0\epma = A_1 = A(1).\]
This shows the base case. Now assume that \eqref{(proof3)} is true for a fixed $1\leq s\leq k-1$. Then
\eq{
A(s+1)=&A_{s+1}A(s)= \pma a_{s+1} & -d_s\\ 1 & 0\epma
\pma \alpha^{\ov a,\sminus\ov d}(s,k) & \beta^{\ov a,\sminus\ov d}(s+1,k)\\ \alpha^{\ov a,\sminus\ov d}(s-1,k) & \beta^{\ov a,\sminus\ov d}(s,k)\epma = \\
=&\pma a_{s+1}\alpha^{\ov a,\sminus\ov d}(s,k) - d_s\alpha^{\ov a,\sminus\ov d}(s-1,k) & a_{s+1}\beta^{\ov a,\sminus\ov d}(s+1,k)-d_s\beta^{\ov a,\sminus\ov d}(s,k)\\ \alpha^{\ov a,\sminus\ov d}(s,k) & \beta^{\ov a,\sminus\ov d}(s+1,k)\epma = \\
=& \pma \alpha^{\ov a,\sminus\ov d}(s+1,k) & \beta^{\ov a,\sminus\ov d}(s+2,k)\\ \alpha^{\ov a,\sminus\ov d}(s,k) & \beta^{\ov a,\sminus\ov d}(s+1,k)\epma
}
by Lemma \ref{recurrences}(2,3), as we needed to show.\\
Now, by \eqref{(proof2)} and setting $A(0):=I_2$, we get
\eqnum{\pma  D^{\ov a,\ov d}(n,k) \\  D^{\ov a,\ov d}(n-1,k)\epma = A(r)A^m\pma 1 \\ 0\epma,\label{(proof4)}}
so $ D^{\ov a,\ov d}(n,k)$ equals the $(1,1)$ entry of $A(r)A^m$.
Since by \eqref{(proof3)}
\[A=A(k)= \pma \alpha^{\ov a,\sminus\ov d}(k,k) & \beta^{\ov a,\sminus\ov d}(k+1,k)\\ \alpha^{\ov a,\sminus\ov d}(k-1,k) & \beta^{\ov a,\sminus\ov d}(k,k)\epma,\]
we get $\tr(A)= \alpha^{\ov a,\sminus\ov d}(k,k)+\beta^{\ov a,\sminus\ov d}(k,k)=\pi^{\ov a,\sminus\ov d}(k,k)$. In addition we have $\det(A)=\det(A(k))=\prod_{s=1}^k\det(A_s)=d_1\cdots d_k = d$. Thus by Lemma \ref{powers}(2), if $U(i):=U_i(\pi^{\ov a,\sminus\ov d}(k,k),d)$ then $A^m=U(m)A-dU(m-1)I_2$, so the first column of $A^m$ is
\eqnum{\pma U(m)\alpha^{\ov a,\sminus\ov d}(k,k)-dU(m-1)\\ U(m)\alpha^{\ov a,\sminus\ov d}(k-1,k)\epma. \label{(proof5)}}
By \eqref{(proof3)}, if $r>0$ then the first row of $A(r)$ is
\eqnum{\pma \alpha^{\ov a,\sminus\ov d}(r,k) & \beta^{\ov a,\sminus\ov d}(r+1,k)\epma, \label{(proof6)}}
and the same holds for $A(0)=I_2$, since $\alpha^{\ov a,\sminus\ov d}(0,k)=1$, $\beta^{\ov a,\sminus\ov d}(1,k)=0$.
Using \eqref{(proof5)} and \eqref{(proof6)} in \eqref{(proof4)} we get
\eqnum{
&D^{\ov a,\ov d}(n,k) =\nonumber\\
&U(m)(\alpha^{\ov a,\sminus\ov d}(k,k)\alpha^{\ov a,\sminus\ov d}(r,k)+\alpha^{\ov a,\sminus\ov d}(k-1,k)\beta^{\ov a,\sminus\ov d}(r+1,k)) -dU(m-1)\alpha^{\ov a,\sminus\ov d}(r,k). \label{(proof7)}}
Observe that if $k\leq n<2k$ then $m=1$, so $U(1)=1, U(0)=0$ and we get
\eqnum{\alpha^{\ov a,\sminus\ov d}(k,k)\alpha^{\ov a,\sminus\ov d}(r,k)+\alpha^{\ov a,\sminus\ov d}(k-1,k)\beta^{\ov a,\sminus\ov d}(r+1,k)=D^{\ov a,\ov d}(k+r,k)=\alpha^{\ov a,\sminus\ov d}(k+r,k)\label{(proof7b)}}
because of \eqref{generaldeterminant}, giving Formula \eqref{det1}
\[D^{\ov a,\ov d}(n,k) =U(m)\alpha^{\ov a,\sminus\ov d}(k+r,k) -dU(m-1)\alpha^{\ov a,\sminus\ov d}(r,k).\]
In addition, we can rewrite \eqref{(proof7)} as
\eqnum{
&D^{\ov a,\ov d}(n,k) = \nonumber\\
&\overset{(1)}{=} \alpha^{\ov a,\sminus\ov d}(r,k)(U(m)(\pi^{\ov a,\sminus\ov d}(k,k)-\beta^{\ov a,\sminus\ov d}(k,k))-dU(m-1))+U(m)\alpha^{\ov a,\sminus\ov d}(k-1,k)\beta^{\ov a,\sminus\ov d}(r+1,k)) = \nonumber\\
&\overset{(2)}{=}\alpha^{\ov a,\sminus\ov d}(r,k)(U(m+1)-U(m)\beta^{\ov a,\sminus\ov d}(k,k)) +U(m)\alpha^{\ov a,\sminus\ov d}(k-1,k)\beta^{\ov a,\sminus\ov d}(r+1,k)) = \nonumber\\
& = U(m+1)\alpha^{\ov a,\sminus\ov d}(r,k) + U(m)(\alpha^{\ov a,\sminus\ov d}(k-1,k)\beta^{\ov a,\sminus\ov d}(r+1,k)- \alpha^{\ov a,\sminus\ov d}(r,k)\beta^{\ov a,\sminus\ov d}(k,k)) = \nonumber\\
&\overset{(3)}{=} U(m+1)\alpha^{\ov a,\sminus\ov d}(r,k) + U(m)d_kd_1\cdots d   _r\alpha_{r+1}^{\ov a,\sminus\ov d}(k-r-2,k),\label{(proof8)}
}
using (1) $\pi^{\ov a,\sminus\ov d}(k,k)=\alpha^{\ov a,\sminus\ov d}(k,k)+\beta^{\ov a,\sminus\ov d}(k,k)$, (2) the definition of $U(i)$ and the recurrence relation for the generalized Fibonacci polynomials (Lemma \ref{powers}(1)), and (3) Lemma \ref{recurrences}(5).
Finally, if $r=k-1$ then $\alpha^{\ov a,\sminus\ov d}_{r+1}(k-r-2,k)=\alpha^{\ov a,\sminus\ov d}_{r+1}(-1,k)=0$ and $D^{\ov a,\ov d}(n,k)=U(m+1)\alpha^{\ov a,\sminus\ov d}(r,k)$ by \eqref{(proof8)}.
\end{proof}

As seen from the proof of Theorem \ref{determinant}, the polynomials of type $\alpha$ in the second period can also be written as a function of the polynomials of types $\alpha,\beta$ in the first period.
\begin{cor}\label{secondperiod} For $0\leq r<k$,
\[\alpha(k+r,k)=\alpha(k,k)\alpha(r,k)+\alpha(k-1,k)\beta(r+1,k).\]
\end{cor}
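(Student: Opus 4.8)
The plan is to read this identity off directly from the proof of Theorem~\ref{determinant}, of which it is the universal ($m=1$) shadow. Inside that proof, Equation~\eqref{(proof7b)} establishes that for every commutative unital ring $K$, all $\ov a,\ov d\in K^k$, and every $0\leq r<k$,
\[\alpha^{\ov a,\sminus\ov d}(k,k)\,\alpha^{\ov a,\sminus\ov d}(r,k)+\alpha^{\ov a,\sminus\ov d}(k-1,k)\,\beta^{\ov a,\sminus\ov d}(r+1,k)=\alpha^{\ov a,\sminus\ov d}(k+r,k),\]
this being precisely the case $n=k+r$ (so $m=1$, $U(1)=1$, $U(0)=0$), justified there through~\eqref{generaldeterminant}. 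Since this equality of evaluations holds over every $K$, I would simply specialize it to the universal instance: take $K:=\Z[x_1,\ldots,x_k,y_1,\ldots,y_k]$, $a_i:=x_i$ and $d_i:=-y_i$, so that $\sminus\ov d=\ov y$ and every evaluation $(\,\cdot\,)^{\ov a,\sminus\ov d}$ becomes the identity substitution. This recovers $\alpha(k+r,k)=\alpha(k,k)\alpha(r,k)+\alpha(k-1,k)\beta(r+1,k)$ as a bona fide polynomial identity.

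Alternatively, for a self-contained argument I would prove the identity by a double-step induction on $r$, using only the recurrences of Lemma~\ref{recurrences}. The base cases $r=0,1$ reduce to $\alpha(0,k)=1$, $\beta(1,k)=0$, $\alpha(1,k)=x_1$, $\beta(2,k)=y_k$, together with the extended relation $\alpha(k+1,k)=x_1\alpha(k,k)+y_k\alpha(k-1,k)$. For the step, I would expand $\alpha(k+r+1,k)$ through its defining (periodic) recurrence, using $x_{k+r+1}=x_{r+1}$ and $y_{k+r}=y_r$, substitute the two inductive hypotheses for $\alpha(k+r,k)$ and $\alpha(k+r-1,k)$, and regroup around $\alpha(k,k)$ and $\alpha(k-1,k)$. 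The $\alpha(k,k)$-coefficient then collapses to $\alpha(r+1,k)$ by Lemma~\ref{recurrences}(2) and the $\alpha(k-1,k)$-coefficient to $\beta(r+2,k)$ by Lemma~\ref{recurrences}(3), which is exactly the statement at $r+1$.

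The first route carries essentially no computation; the only points demanding care are the sign bookkeeping (choosing $d_i=-y_i$ so that $y_i\mapsto-d_i$ returns $y_i$) and confirming that~\eqref{(proof7b)} is valid for the entire range $0\leq r<k$ and not merely for a single remainder. For the inductive route the main obstacle is purely organizational: aligning the shifted index ranges of Lemma~\ref{recurrences}(2,3) with the periodic reindexing $x_{k+i}=x_i$, $y_{k+i}=y_i$ so that both coefficient recurrences apply simultaneously at every step, and checking that the two base cases suffice to launch the induction up to $r=k-1$. I would favour the first route, since it makes transparent that the corollary is nothing but the polynomial form of the $m=1$ instance of Theorem~\ref{determinant}.
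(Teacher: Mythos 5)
Your primary route is exactly the paper's proof: the corollary is proved there in one line by applying \eqref{(proof7b)} to $K:=\Z[x_1,\ldots,x_k,y_1,\ldots,y_k]$, which is precisely your universal specialization $a_i:=x_i$, $d_i:=-y_i$, and your worry about the range is unfounded since \eqref{(proof7b)} is derived for all $k\le n<2k$ (the case $r=0$ being trivially $\alpha(k,k)=\alpha(k,k)\cdot1+\alpha(k-1,k)\cdot0$). Your backup induction via Lemma \ref{recurrences}(2,3) would also go through, but it is not needed.
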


\begin{proof}
Apply \eqref{(proof7b)} of the proof of Theorem \ref{determinant} to $K:=\Z[x_1,\ldots,x_k,y_1,\ldots,y_k]$.
\end{proof}

Probably there are no other cases apart from $r=k-1$, depending only on $k$ and $r$, for which the determinant of a general tridiagonal $k$-Toeplitz matrix factors\footnote{We have checked that the universal determinants of tridiagonal $k$-Toeplitz matrices of size $n$ are irreducible for $k,n\leq 10$ except when $r=k-1$.}. On the other hand, if some entry of the upper or lower main diagonal is zero, then the determinant factors.

\begin{cor}[\n{\small Determinant of a reducible tridiagonal $k$-Toeplitz matrix}]\label{reducibledet}
\mbox{}

\noindent Let $K$ be a commutative unital ring. Given $k,n\in\N^*$ and $\ov a, \ov b:=(b_1,\ldots,b_k), \ov c:=(c_1,\ldots,c_k)\in K^k$, write $n=mk+r$ by Euclidean division, put $d_i:=b_ic_i$ for $1\leq i\leq k$ and $\ov d:=(d_1,\ldots,d_k)$.
Suppose $b_{i_1}=\ldots=b_{i_q}=0$ ($q\geq1, n>i_1, i_1<i_2<\cdots<i_q\leq k$) or $c_{i_1}=\ldots=c_{i_q}=0$ and put $i_{q+1}:=i_1+k$, $i_0:=i_q$. Let $r':=r+k$, $p:=0$ if $r\leq i_1$ and $r':=r$ and $p\in\N^*$ be such that $i_p<r\leq i_{p+1}$ otherwise. Then
\eq{
&\det(T^k_n(\ov a,\ov b,\ov c))=
\alpha^{\ov a,\sminus\ov d}(i_1,k)\prod_{j=1}^q(\alpha_{i_j}^{\ov a,\sminus\ov d}(i_{j+1}-i_j,k))^{m+m_j}\alpha_{i_p}^{\ov a,\sminus\ov d}(r'-i_p,k)
}
with $m_j:=\left\{\begin{array}{cc} 1 & \text{if }j<p\\0  & \text{if }j\geq p\end{array}\right.$ for $1\leq j<q$ and $m_q:=\left\{\begin{array}{cc} 0 & \text{if }p>0\text{ or }m=0\\-1  & \text{if }p=0\end{array}\right.$.
\end{cor}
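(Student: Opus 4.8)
The plan is to exploit reducibility directly. Whether it is the $b$'s or the $c$'s that vanish, each condition $b_{i_j}=0$ or $c_{i_j}=0$ forces $d_{i_j}=b_{i_j}c_{i_j}=0$ and, by Remark \ref{tridiagonalblocktriangular}, makes $T^k_n(\ov a,\ov b,\ov c)$ block lower (resp. upper) triangular. Applying that remark iteratively splits $T$ at every position congruent to some $i_j$ modulo $k$ lying in $[1,n-1]$, so by Theorem \ref{blockdeterminant} the determinant is the product of the determinants of the diagonal blocks; both sign conventions give the same block sizes and hence the same product. Each diagonal block is a tridiagonal $k$-Toeplitz matrix $T^k_N(\sigma_s(\ov a),\sigma_s(\ov b),\sigma_s(\ov c))$ of some size $N$ and shift $s$, and by Lemma \ref{reducevariables} together with \eqref{generaldeterminant} its determinant is $\alpha^{\sigma_s(\ov a),\sminus\sigma_s(\ov d)}(N,k)=\alpha_s^{\ov a,\sminus\ov d}(N,k)$ (negation commuting with the shift). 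Since shifting by $k$ is the identity, every shift may be reduced modulo $k$, which is exactly what allows all blocks to be written with shifts among $i_1,\dots,i_q$. The whole proof thus reduces to identifying the blocks and counting multiplicities.

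Next I would describe the block pattern. The vanishing diagonal entries occur precisely at the positions $i_j+lk$ with $1\leq j\leq q$, $l\geq0$, that are at most $n-1$, and consecutive such positions cut out the blocks. The first block is $[1,i_1]$, of size $i_1$ and shift $0$, contributing the leading factor $\alpha^{\ov a,\sminus\ov d}(i_1,k)$. Every other block lies between a zero $i_j+lk$ and the next zero; when the next zero is the standard successor the block has size $i_{j+1}-i_j$ (with the convention $i_{q+1}=i_1+k$ for the block straddling a period boundary) and contributes $\alpha_{i_j}^{\ov a,\sminus\ov d}(i_{j+1}-i_j,k)$. The unique non-standard block is the final one, running from the last zero to $n$, whose size and shift are engineered to yield the trailing factor $\alpha_{i_p}^{\ov a,\sminus\ov d}(r'-i_p,k)$.

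The heart of the argument is then the bookkeeping of how many standard blocks of each type $j$ arise, that is, the exponents $m+m_j$, and I would split along the two cases encoded by $p$. If $r>i_1$ (so $p\geq1$, $r'=r$) the tail of size $r$ contains the extra zeros $mk+i_1,\dots,mk+i_p$, so the types $j<p$ acquire one block beyond the $m$ coming from the complete periods (exponent $m+1$), the types $j\geq p$ retain exponent $m$, and the last zero $mk+i_p$ is followed by the final block of size $r-i_p$; this matches the stated $m_j$ and trailing factor. If $r\leq i_1$ (so $p=0$, $r'=r+k$, $i_0=i_q$) the tail contains no zero, the last zero is $i_q+(m-1)k$, and the block after it merges with the tail into the final block of size $r+k-i_q=r'-i_q$; this costs type $q$ one standard block, explaining $m_q=-1$, while all remaining types keep exponent $m$. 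Assembling the factors in order produces the claimed product.

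I expect the main obstacle to be precisely this combinatorial bookkeeping at the seams: the block straddling a period boundary (handled by $i_{q+1}=i_1+k$ and $i_0=i_q$), the merging of the last complete-period block with the incomplete tail when $p=0$, and the degenerate configurations such as $r=0$ with $i_q=k$, where a would-be zero falls on position $n$ and disappears so that the trailing $\alpha_{i_q}^{\ov a,\sminus\ov d}(0,k)=1$ is vacuous and its multiplicity is silently reassigned. These are absorbed by the convention $\alpha(0,k)=1$ and by the definitions of $p$, $r'$, and $m_q$, so the real content lies in checking that every boundary configuration collapses to the correct factor; the block-triangular reduction and formula \eqref{generaldeterminant} carry out all the substantive work.
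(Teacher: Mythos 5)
Your proposal is correct and follows essentially the same route as the paper's proof: iterating Remark \ref{tridiagonalblocktriangular} to split $T^k_n(\ov a,\ov b,\ov c)$ at every vanishing off-diagonal entry, applying Theorem \ref{blockdeterminant}, evaluating each tridiagonal $k$-Toeplitz diagonal block via \eqref{generaldeterminant} as a shifted $\alpha$-polynomial, and then doing the same combinatorial bookkeeping of the periodic block pattern (your case split on $p$ and your treatment of the seams, including the degenerate $r=0$, $i_q=k$ configuration absorbed by $\alpha(0,k)=1$, match the paper's enumeration of the repeating arrays of blocks). The only cosmetic difference is that you treat the $c_{i_j}=0$ case directly as block upper triangular rather than reducing it to the $b$-case by transposition as the paper does.
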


\begin{proof}
If $c_{i_1}=\ldots=c_{i_q}=0$ then $T^k_n(\ov a,\ov b,\ov c)^T$ is a tridiagonal $k$-Toeplitz matrix with zeros in the upper main diagonal satisfying $\det(T^k_n(\ov a,\ov b,\ov c)^T)=\det(T^k_n(\ov a,\ov b,\ov c))$, so without loss of generality we study only the case with $b_{i_1}=\ldots=b_{i_q}=0$. By Remark \ref{tridiagonalblocktriangular}, $T^k_n(\ov a,\ov b,\ov c)$ is a block lower triangular matrix with tridiagonal $k$-Toeplitz matrices as diagonal blocks, which repeat periodically. Concretely, the first diagonal block (since $n>i_1$) is $T^k_{i_1}(\ov a,\ov b,\ov c)$; then the array of diagonal blocks
\eq{
&T^k_{i_2-i_1}(\sigma_{i_1}(\ov a),\sigma_{i_1}(\ov b),\sigma_{i_1}(\ov c))), \ldots, T^k_{i_q-i_{q-1}}(\sigma_{i_{q-1}}(\ov a),\sigma_{i_{q-1}}(\ov b),\sigma_{i_{q-1}}(\ov c))),\\
&T^k_{k+i_1-i_q}(\sigma_{i_q}(\ov a),\sigma_{i_q}(\ov b),\sigma_{i_q}(\ov c)))
}
repeats $m-1$ times (if $m>1$, otherwise it does not appear); after that, the array
\[T^k_{i_2-i_1}(\sigma_{i_1}(\ov a),\sigma_{i_1}(\ov b),\sigma_{i_1}(\ov c))), \ldots, T^k_{i_q-i_{q-1}}(\sigma_{i_{q-1}}(\ov a),\sigma_{i_{q-1}}(\ov b),\sigma_{i_{q-1}}(\ov c)))\]
appears again if $m>0$ (it does not appear otherwise); and the last diagonal blocks depend on the relative position of $r$ with respect to the indices $i_1,\ldots,i_q$: if $r\leq i_1$ (what implies $m>0$) then only one block more appears, namely
\[T^k_{k+r-i_q}(\sigma_{i_q}(\ov a),\sigma_{i_q}(\ov b),\sigma_{i_q}(\ov c)),\]
while if on the contrary there is $p\in\N^*$ such that $i_p<r\leq i_{p+1}$ then the ending array of blocks is
\eq{
&T^k_{k+i_1-i_q}(\sigma_{i_q}(\ov a),\sigma_{i_q}(\ov b),\sigma_{i_q}(\ov c))), T^k_{i_2-i_1}(\sigma_{i_1}(\ov a),\sigma_{i_1}(\ov b),\sigma_{i_1}(\ov c))), \ldots\\
&\ldots,T^k_{i_p-i_{p-1}}(\sigma_{i_{p-1}}(\ov a),\sigma_{i_{p-1}}(\ov b),\sigma_{i_{p-1}}(\ov c))), T^k_{r-i_{p}}(\sigma_{i_{p}}(\ov a),\sigma_{i_{p}}(\ov b),\sigma_{i_{p}}(\ov c))).
}
By Theorem \ref{blockdeterminant} the determinant of a block triangular matrix is the product of the determinants of its diagonal blocks, and by \eqref{generaldeterminant}, for $s,t,k\in\N$,
\[\det(T^k_t(\sigma_s(\ov a),\sigma_s(\ov b),\sigma_s(\ov c)))=\alpha^{\ov a,\sminus\ov d}_s(t,k).\]
The result follows.
\end{proof}

\begin{rem}[\n{Da Fonseca and Petronilho's formula}]\label{FonsecaPetronilho}
\mbox{}

In \cite[Section 4]{FonsecaPetronilho2005}, Da Fonseca and Petronilho give a formula for the entries of the inverse of a complex irreducible tridiagonal $k$-Toeplitz matrix, based on a sequence of polynomials $\{Q_{nk+j}\}$ described in terms of determinants and Chebyshev polynomials. In this remark we show that their formula is a particular case of ours, by showing that $Q_{nk+j}$ is in fact the determinant of the tridiagonal $k$-Toeplitz matrix of order $nk+j$, and that the definitory formula of $Q_{nk+j}$, once the determinants are explicitly expressed as polynomials and the Chebyshev polynomials are related to generalized Fibonacci polynomials as in Remark \ref{Chebyshev}, coincides with Formula \eqref{det2} for the determinant given in Theorem \ref{determinant}.

Since the notation used in \cite{FonsecaPetronilho2005} diverges from ours, for ease of comprehension we include a table with the correspondence of notations:
\begin{center}\small\begin{tabular}{|c|c|c||c|c|c|}
  \hline
  Parameter & da Fonseca\&Petronilho & Brox\&Albuquerque & Parameter & d.F.\&P. & B.\&A.\\\hline\hline
  {Diagonal var.} & $z_i$ & $x_i$ or $a_i$  & Matrix size & $N$ & $n$ \\\hline
  {Nondiag. var.} & $w_i$ & $y_i$ or $d_i=b_ic_i$ & Period & $k$ & $k$\\\hline
  {Nondiag. product} & $w^2$ & $d$ & Quotient  & $n$ & $m$ \\\hline
  {Tridiag. det.} & $\Delta$ & $\alpha$ & Remainder & $j$ & $r$ \\\hline
\end{tabular}\end{center}
\newpage

In order to show the equivalence of formulas, we now translate the relevant objects from \cite{FonsecaPetronilho2005} to our setting. In the following any definition is as given in \cite{FonsecaPetronilho2005}. Fixed $k\in\N^*$, consider $z_1,\ldots,z_k\in\Co$, $w_1,\ldots,w_k\in\Co^*$ and $w^2:=\prod_{i=1}^k w_i$. For any $0\leq j<k$ they define
\[\Delta\left(\begin{array}{c}z_1,\ldots,z_j\\ w_1,\ldots,w_{j-1} \end{array};0\right):= \vma
z_1 & 1 & 0 & \cdots & \cdots & 0\\
w_1 & z_2 & 1 & \ddots  & \ddots  & \vdots\\
0 & w_2 & z_3 & \ddots & \ddots & 0\\
\vdots & \ddots & \ddots & \ddots  & \ddots & \vdots\\
\vdots & \ddots & \ddots & \ddots & z_{j-1} & 1\\
0 & \cdots & 0 & \cdots & w_{j-1} & z_j
\evma.\]
We put $\ov z:=(z_1,\ldots,z_k), \ov{w}:=(w_1,\ldots,w_k)\in \Co^k$.
We have, by \eqref{generaldeterminant},
\eqnum{\Delta\left(\begin{array}{c}z_1,\ldots,z_j\\ w_1,\ldots,w_{j-1} \end{array};0\right) = \alpha^{\ov z,\sminus\ov{w}}(j,k). \label{(C1)}}
Substituting $k-j-2$ in place of $j$ and shifting the determinant by $j+1$, in addition we get
\eqnum{\Delta\left(\begin{array}{c}z_{j+2},\ldots,z_{k-1}\\ w_{j+2},\ldots,w_{k-2} \end{array};0\right)= \alpha^{\ov z,\sminus\ov{w}}_{j+1}(k-j-2,k). \label{(C2)}}
They also define
\[\pi_k\left(\begin{array}{c}z_1,\ldots,z_k\\ w_1,\ldots,w_k \end{array};0\right):=
\vma
z_1 & 1 & 0 & \cdots & 0 & 0 & \bm 1 \\
w_1 & z_2 & 1 & \ddots  & 0  & 0 & 0\\
0 & w_2 & z_3 & \ddots & 0 & 0 & 0\\
\vdots & \vdots & \ddots & \ddots  & \ddots & \vdots & \vdots\\
0 & 0 & 0 & \ddots & z_{k-2} & 1 & 0\\
0 & 0 & 0 & \ddots & w_{k-2} & z_{k-1} & 1\\
\bm{w_k} & 0 & 0 & \cdots & 0 & w_{k-1} & z_k
\evma,\]
which can be translated to our setting through the following expansion:
\eq{
&\pi_k\left(\begin{array}{c}z_1,\ldots,z_k\\ w_1,\ldots,w_k \end{array};0\right) =\\
=&\vma
z_1 & 1 & 0 & \cdots & \bm1 \\
w_1 & z_2 & \ddots  & \ddots & \vdots\\
0 & \ddots & \ddots  & \ddots & 0\\
\vdots & \ddots & \ddots & z_{k-1} & 1\\
0 & \cdots & 0 & w_{k-1} & z_k
\evma
+ (-1)^{k+1}w_k\vma
1 & 0 & \cdots & 0 & \bm1 \\
z_2 & 1 & \ddots  & 0 & 0\\
\vdots & \ddots & \ddots  & \ddots & \vdots\\
0 & 0 & \ddots & 1 & 0\\
0 & 0 & \ddots & z_{k-1} & 1
\evma= \\
=&\vma
z_1 & 1 & 0 & \cdots & 0 \\
w_1 & z_2 & \ddots  & \ddots & \vdots\\
0 & \ddots & \ddots  & \ddots & 0\\
\vdots & \ddots & \ddots & z_{k-1} & 1\\
0 & \cdots & 0 & w_{k-1} & z_k
\evma
+(-1)^{k+1}\vma
w_1 & z_2 & \cdots  & 0\\
0 & \ddots & \ddots  & \vdots\\
\vdots & \ddots & \ddots & z_{k-1}\\
0 & \cdots & 0 & w_{k-1}
\evma+\\
&+(-1)^{k+1}w_k\left(\vma
1 & 0 & \cdots & 0 \\
z_2 & \ddots  & \ddots & \vdots\\
\vdots & \ddots & \ddots & 0\\
0 & \cdots & z_{k-1} & 1
\evma+(-1)^k
\vma
z_2 & 1 & 0 & \cdots & 0 \\
w_2 & \ddots & \ddots  & \ddots & \vdots\\
0 & \ddots & \ddots  & \ddots & 0\\
\vdots & \ddots & \ddots & \ddots & 1\\
0 & \cdots & 0 & w_{k-1} & z_{k-1}
\evma\right)=\\
=&\alpha^{\ov z,\sminus\ov{w}}(k,k) +(-1)^{k+1}w^2/w_k +(-1)^{k+1}w_k -w_k\alpha_1^{\ov z,\sminus\ov{w}}(k-2,k).
}
Thus, since $-w_k\alpha_1^{\ov z,\sminus\ov{w}}(k-2,k) = \beta^{\ov z,\sminus\ov{w}}(k,k)$ by Lemma \ref{recurrences}(5) (with $r=0$), and $\alpha^{\ov z,\sminus\ov{w}}(k,k)+\beta^{\ov z,\sminus\ov{w}}(k,k)=\pi^{\ov z,\sminus\ov{w}}(k,k)$, we get
\eqnum{\pi_k\left(\begin{array}{c}z_1,\ldots,z_k\\ w_1,\ldots,w_k \end{array};0\right) +(-1)^k(w_k+w^2/w_k) = \pi^{\ov z,\sminus\ov{w}}(k,k). \label{(C3)}}
For $n\in\N$ they define
\[U_{n,k}\left(\begin{array}{c}z_1,\ldots,z_k\\ w_1,\ldots,w_k \end{array};0\right):=w^n U_n\left(\frac1{2w}\left(\pi_k\left(\begin{array}{c}z_1,\ldots,z_k\\ w_1,\ldots,w_k \end{array};0\right)+(-1)^k(w_k+w^2/w_k)\right)\right)\]
where $U_m(x)$ denotes as before the $m$th Chebyshev polynomial of the second kind. Therefore, by Formulas \eqref{(C3)} and \eqref{(B)} we see that
\eqnum{U_{n,k}\left(\begin{array}{c}z_1,\ldots,z_k\\ w_1,\ldots,w_k \end{array};0\right) = U_{n+1}(\pi^{\ov z,\sminus\ov{w}}(k,k),w^2) \label{(C4)}}
with $U_m(x,y)$ the generalized Fibonacci polynomial of order $m$, as before.
Lastly, for $n\in\N^*$ and $0\leq j<k$ they define
\eq{
Q_{nk+j}\left(\begin{array}{c}z_1,\ldots,z_k\\ w_1,\ldots,w_k \end{array};0\right):=&
\Delta\left(\begin{array}{c}z_1,\ldots,z_j\\ w_1,\ldots,w_{j-1} \end{array};0\right)U_{n,k}\left(\begin{array}{c}z_1,\ldots,z_k\\ w_1,\ldots,w_k \end{array};0\right) +\\
 +w_kw_1\cdots w_j&\Delta\left(\begin{array}{c}z_{j+2},\ldots,z_{k-1}\\ w_{j+2},\ldots,w_{k-2} \end{array};0\right)U_{n-1,k}\left(\begin{array}{c}z_1,\ldots,z_k\\ w_1,\ldots,w_k \end{array};0\right).
}
Putting $U(i):=U_i(\pi^{\ov z,\sminus\ov{w}}(k,k),w^2)$, by \eqref{(C1)}, \eqref{(C2)} and \eqref{(C4)} we arrive at
\[Q_{nk+j}\left(\begin{array}{c}z_1,\ldots,z_k\\ w_1,\ldots,w_k \end{array};0\right)=
\alpha^{\ov z,\sminus\ov{w}}(j,k)U(n+1) +w_kw_1\cdots w_j\alpha^{\ov z,\sminus\ov{w}}_{j+1}(k-j-2,k)U(n)\]
which equals $\det(T^k_{nk+j}(\ov z,\ov b,\ov c))$ by Formula \eqref{det2} of Theorem \ref{determinant}, for any $\ov b:=(b_1,\ldots,b_k),\ov c:=(c_1,\ldots,c_k)\in\Co^k$ such that $b_ic_i=w_i$ for $1\leq i\leq k$.
\end{rem}

\smallskip

In the same vein, it can be easily shown that Rózsa's formula for the determinant of a symmetric irreducible tridiagonal $k$-Toeplitz matrix over $\Co$ (\cite[Theorem]{Rozsa1969}) is a particular case of Formula \eqref{det2}.

\section{Spectral properties}\label{sectionspectralproperties}

We show how to compute the characteristic polynomial and some eigenvectors of a tridiagonal $k$-Toeplitz matrix through determinants of other related tridiagonal $k$-Toeplitz matrices.

\begin{cor}[\n{\small Characteristic polynomial of a tridiagonal $k$-Toeplitz matrix}]\label{charpolcor}
\mbox{}

\nin Let $K$ be a commutative unital ring. Given $k,n\in\N^*$ and $\ov a, \ov b:=(b_1,\ldots,b_k)$, $\ov c:=(c_1,\ldots,c_k)\in K^k$, put $d_i:=b_ic_i$ for $1\leq i\leq k$ and $\ov d:=(d_1,\ldots,d_k)$.\\
If $n\leq k$ (i.e, if we are considering a general tridiagonal matrix) then
\[p_{T^k_n(\ov a,\ov b,\ov c)}(x) = \alpha^{\ov x-\ov a,\sminus\ov d}(n,k).\]
If $n>k$ write $n=mk+r$ by Euclidean division, put $d:=d_1\cdots d_k$, and denote $U^x(i):=U_i(\pi^{\ov x-\ov a,\sminus\ov d}(k,k),d)$ for $i\in\N$;
then
\eqnum{
p_{T^k_n(\ov a,\ov b,\ov c)}(x) =&U^x(m)\alpha^{\ov x-\ov a,\sminus\ov d}(k+r,k) -dU^x(m-1)\alpha^{\ov x-\ov a,\sminus\ov d}(r,k) \label{charpol1}\\
=&U^x(m+1)\alpha^{\ov x-\ov a,\sminus\ov d}(r,k)+d_kd_1\cdots d_rU^x(m)\alpha_{r+1}^{\ov x-\ov a,\sminus\ov d}(k-r-2,k). \label{charpol2}
}
In particular, if $r=k-1$ then $p_{T^k_n(\ov a,\ov b,\ov c)}$ factors as
\[p_{T^k_n(\ov a,\ov b,\ov c)}(x) = U^x(m+1)\alpha^{\ov x-\ov a,\sminus\ov d}(k-1,k).\]
Moreover, if $T^k_n(\ov a,\ov b,\ov c)$ is reducible with $b_{i_1}=\ldots=b_{i_q}=0$ ($q\geq1, n>i_1, i_1<i_2<\cdots<i_q$) or $c_{i_1}=\ldots=c_{i_q}=0$, putting $i_{q+1}:=i_1+k$, $i_0:=i_q$, $r':=r+k$ and $p:=0$ if $r\leq i_1$ or $r':=r$ and $p\in\N^*$ such that $i_p<r\leq i_{p+1}$ otherwise, then $p_{T^k_n(\ov a,\ov b,\ov c)}$ factors as
\eq{
&p_{T^k_n(\ov a,\ov b,\ov c)}=
\alpha^{\ov x-\ov a,\sminus\ov d}(i_1,k)\prod_{j=1}^q(\alpha_{i_j}^{\ov x-\ov a,\sminus\ov d}(i_{j+1}-i_j,k))^{m+m_j}\alpha_{i_p}^{\ov x-\ov a,\sminus\ov d}(r'-i_p,k)
}
with $m_j:=\left\{\begin{array}{cc} 1 & \text{if }j<p\\0  & \text{if }j\geq p\end{array}\right.$ for $1\leq j<q$ and $m_q:=\left\{\begin{array}{cc} 0 & \text{if }p>0\text{ or }m=0\\-1  & \text{if }p=0\end{array}\right.$.
\end{cor}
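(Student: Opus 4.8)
The plan is to recognize the characteristic polynomial as the determinant of a tridiagonal $k$-Toeplitz matrix over the polynomial ring $K[x]$, and then simply invoke the determinant formulas already established. Concretely, $p_{T^k_n(\ov a,\ov b,\ov c)}(x)=\det(xI_n-T^k_n(\ov a,\ov b,\ov c))$, and the matrix $xI_n-T^k_n(\ov a,\ov b,\ov c)$ is itself tridiagonal $k$-Toeplitz over $K[x]$: its main diagonal is generated periodically by $\ov x-\ov a$, its upper main diagonal by $-\ov b$, and its lower main diagonal by $-\ov c$. In other words, $xI_n-T^k_n(\ov a,\ov b,\ov c)=T^k_n(\ov x-\ov a,-\ov b,-\ov c)$ as a matrix over $K[x]$. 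The crucial observation is that the off-diagonal products entering all our formulas are unchanged: for each $i$ we have $(-b_i)(-c_i)=b_ic_i=d_i$, so both the vector $\ov d$ and the scalar $d=d_1\cdots d_k$ coincide with those of the original matrix. Thus computing $p_{T^k_n(\ov a,\ov b,\ov c)}(x)$ differs from computing $\det(T^k_n(\ov a,\ov b,\ov c))$ only in the replacement of $\ov a$ by $\ov x-\ov a$ in the diagonal slot.

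With this identification in hand, each assertion follows by applying, over the commutative unital ring $K[x]$, the corresponding result already proven for the determinant, with $\ov a$ replaced by $\ov x-\ov a$ (and hence $U(i)$ replaced by $U^x(i):=U_i(\pi^{\ov x-\ov a,\sminus\ov d}(k,k),d)$). For $n\leq k$ the first formula is \eqref{generaldeterminant} applied through Lemma \ref{reducevariables}; for $n>k$ the two expressions \eqref{charpol1} and \eqref{charpol2} are precisely Formulas \eqref{det1} and \eqref{det2} of Theorem \ref{determinant}; the factorization in the case $r=k-1$ is \eqref{det3}; and the reducible case is Corollary \ref{reducibledet}. For the reducible case one need only note that the block decomposition is governed by the same indices, since $-b_{i_j}=0$ if and only if $b_{i_j}=0$ (and similarly for the $c_i$), so negating the off-diagonals preserves the reducibility pattern.

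Since the heavy combinatorial work was carried out in Theorem \ref{determinant} and Corollary \ref{reducibledet}, I expect no genuine obstacle here; the proof is in essence a substitution. The only steps requiring any care are the two structural checks above: that $xI_n-T^k_n(\ov a,\ov b,\ov c)$ is indeed tridiagonal $k$-Toeplitz over $K[x]$ with off-diagonal products equal to the original $d_i$, and that this negation leaves the reducibility data invariant. Both are immediate, so the argument reduces to quoting the earlier results with $\ov a\mapsto\ov x-\ov a$.
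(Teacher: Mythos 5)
Your proposal is correct and follows essentially the same route as the paper: it identifies $p_{T^k_n(\ov a,\ov b,\ov c)}(x)=\det(xI_n-T^k_n(\ov a,\ov b,\ov c))=\det(T^k_n(\ov x-\ov a,-\ov b,-\ov c))$ over $K[x]$ and then quotes \eqref{generaldeterminant}, Theorem \ref{determinant}, and Corollary \ref{reducibledet}, noting that the reducibility pattern is preserved under negation. Your explicit check that $(-b_i)(-c_i)=d_i$, so that $\ov d$ and $d$ are unchanged, is a detail the paper leaves implicit, but the argument is the same.
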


\begin{proof}
Since \eqref{generaldeterminant} and Theorem \ref{determinant} give formulas for the determinant of any tridiagonal $k$-Toeplitz matrix over any commutative unital ring, they can be used to compute $p_{T^k_n(\ov a,\ov b,\ov c)}=\det(xI_n-T^k_n(\ov a,\ov b,\ov c))=\det(T^k_n(\ov x-\ov a,-\ov b,-\ov c))$ over $K[x]$. Moreover, if $T^k_n(\ov a,\ov b,\ov c)$ is reducible with $b_{i_1}=\ldots=b_{i_q}=0$ or $c_{i_1}=\ldots=c_{i_q}=0$ then the same happens to $T^k_n(\ov x-\ov a,-\ov b,-\ov c)$, and Corollary \ref{reducibledet} applies.
\end{proof}

\begin{rem}[\n{Finding the eigenvalues in the case $r=k-1$}]\label{charpolfactorizesspecialcase}
\mbox{}

The factorization of the characteristic polynomial found in Corollary \ref{charpolcor} in the special case $r=k-1$,
\eqnum{p_{T^k_n(\ov a,\ov b,\ov c)}(x) = U_{m+1}(\pi^{\ov x-\ov a,\sminus\ov d}(k,k),d)\alpha^{\ov x-\ov a,\sminus\ov d}(k-1,k),\label{factored}}
generalizes the same phenomenon shown in the literature for the complex numbers many times before (e.g. \cite[(7)]{EgervarySzasz1928},\cite[Corollary]{Rozsa1969},\cite[Remark 5]{ElsnerRedheffer1967},\cite[Theorem 5.1]{FonsecaPetronilho2005},\cite[(2.6),(2.12)]{AlvarezNodarsePetronilhoQuintero}), where it is used to ease the computation of the eigenvalues of the matrix. Specifically, if $d\neq0$, writing the factorization in terms of a Chebyshev polynomial of the second kind (see Remark \ref{Chebyshev}),
\eqnum{p_{T^k_n(\ov a,\ov b,\ov c)}(x) = (\sqrt{d})^mU_{m}(\pi^{\ov x-\ov a,\sminus\ov d}(k,k)/2\sqrt d)\alpha^{\ov x-\ov a,\sminus\ov d}(k-1,k),\label{factored2}}
we find that the eigenvalues are either roots of $\alpha^{\ov x-\ov a,\sminus\ov d}(k-1,k)$ (a polynomial of degree $k-1$) or roots of $U_{m}(\rho(x))$ (a polynomial of degree $n-k+1$) with $\rho(x):=\pi^{\ov x-\ov a,\sminus\ov d}(k,k)/(2\sqrt d)$; in particular the roots of the Chebyshev polynomial can be found by solving the $m=\lfloor n/k\rfloor$ polynomial equations of degree $k$
\[\rho(x)=\cos\frac{i\pi}{m+1}\text{ for }1\leq i \leq m\]
 (by the well-known trigonometric properties of Chebyshev polynomials over $\Co$).\\
 More in general, if $K$ is an integral domain with $\ch(K)\neq2$ then the factorization in \eqref{factored2} can be used to subdivide the problem in the same way (with solutions in an algebraic closure $\ov{Q(K)}$ of its field of fractions). Over an arbitrary commutative unital ring $K$ we can still use \eqref{factored} to subdivide the problem, since if $p_{T^k_n(\ov a,\ov b,\ov c)}(\lambda)$ is a zero divisor of $K$ for $\lambda\in K$ then $U_{m+1}(\pi^{\ov \lambda-\ov a,\sminus\ov d}(k,k),d)$ or $\alpha^{\ov \lambda-\ov a,\sminus\ov d}(k-1,k)$ must be a zero divisor of $K$.
\end{rem}

\begin{thm}[\n{Eigenvectors of a tridiagonal $k$-Toeplitz matrix}]\label{eigenvectors}
\mbox{}

\nin Let $\lambda$ be an eigenvalue of $T^k_n(\ov a,\ov b,\ov c)$ and pick $0\neq z\in\Ann(p_{T^k_n(\ov a,\ov b,\ov c)}(\lambda))$. Put $d_i:=b_ic_i$, $\ov d:=(d_1,\ldots,d_k)$.
\enuma
\item For $1\leq i\leq n$ define
\[v^z_i(\lambda):=z\prod_{j=i}^{n-1}b_j \cdot D^{\ov\lambda-\ov a,\ov d}(i-1,k).\]
If $v^z_i(\lambda)\neq0$ for some $1\leq i\leq n$ then $(v^z_1(\lambda),\ldots,v^z_n(\lambda))$ is an eigenvector of $T^k_n(\ov a,\ov b,\ov c)$ associated to $\lambda$.
\item For $1\leq i\leq n$ define
\[w^z_i(\lambda):=z\prod_{j=1}^{i-1}c_j \cdot D_i^{\ov\lambda-\ov a,\ov d}(n-i,k).\]
If $w^z_i(\lambda)\neq0$ for some $1\leq i\leq n$ then $(w^z_1(\lambda),\ldots,w^z_n(\lambda))$ is an eigenvector of $T^k_n(\ov a,\ov b,\ov c)$ associated to $\lambda$.
\eenum
\end{thm}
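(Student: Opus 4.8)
The plan is to verify directly that the candidate vectors solve the linear system $(T^k_n(\ov a,\ov b,\ov c)-\lambda I_n)v=0$, row by row, exploiting the second-order recurrence satisfied by the evaluated universal determinants. First I would record the identity that the characteristic polynomial is itself an evaluated universal determinant: exactly as in the proof of Corollary \ref{charpolcor}, $p_{T^k_n(\ov a,\ov b,\ov c)}(x)=\det(T^k_n(\ov x-\ov a,-\ov b,-\ov c))$, which by Lemma \ref{reducevariables} equals $D^{\ov x-\ov a,\ov d}(n,k)$ (since $(-b_i)(-c_i)=d_i$). Hence $p_{T^k_n(\ov a,\ov b,\ov c)}(\lambda)=D^{\ov\lambda-\ov a,\ov d}(n,k)$, so the annihilator hypothesis reads $z\,D^{\ov\lambda-\ov a,\ov d}(n,k)=0$.

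For part (a), abbreviate $P_i:=D^{\ov\lambda-\ov a,\ov d}(i,k)$, so that $v^z_i(\lambda)=z\,(\prod_{j=i}^{n-1}b_j)\,P_{i-1}$. Lemma \ref{detrecurrences}(1), evaluated at $x_i\mapsto\lambda-a_i$, $y_i\mapsto d_i$, yields $P_i=(\lambda-a_i)P_{i-1}-d_{i-1}P_{i-2}$ for $i\geq1$, with $P_0=1$, $P_{-1}=0$. Writing each scalar equation $c_{i-1}v_{i-1}+b_iv_{i+1}=(\lambda-a_i)v_i$, I would substitute the formula for $v^z_i$, factor out the common product $z\prod_{j=i+1}^{n-1}b_j$, and use $d_{i-1}=b_{i-1}c_{i-1}$; each row $1\leq i\leq n-1$ then collapses to exactly the recurrence $P_i=(\lambda-a_i)P_{i-1}-d_{i-1}P_{i-2}$ and so holds identically (for $i=1$ this uses $P_{-1}=0$). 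The one interesting equation is the last row $i=n$: there the empty product gives $v^z_n=zP_{n-1}$, and the equation reduces to $z\big[(\lambda-a_n)P_{n-1}-d_{n-1}P_{n-2}\big]=zP_n=z\,p_{T^k_n(\ov a,\ov b,\ov c)}(\lambda)=0$, which is precisely where $z\in\Ann(p_{T^k_n(\ov a,\ov b,\ov c)}(\lambda))$ is used. Since some $v^z_i(\lambda)\neq0$ by hypothesis, the vector is nonzero and hence a genuine eigenvector.

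For part (b) the same mechanism runs from the opposite end. Abbreviating $Q_i:=D_i^{\ov\lambda-\ov a,\ov d}(n-i,k)$, so that $w^z_i(\lambda)=z\,(\prod_{j=1}^{i-1}c_j)\,Q_i$, I would apply Lemma \ref{detrecurrences}(2) with shift $s=i$ and size $n-i$ to obtain the backward recurrence $Q_i=(\lambda-a_{i+1})Q_{i+1}-d_{i+1}Q_{i+2}$, with base values $Q_n=D(0,k)=1$ and $D_s(-1,k)=0$. An entirely parallel substitution (now factoring out $z\prod_{j=1}^{i-1}c_j$ and using $d_i=b_ic_i$) shows that every row $2\leq i\leq n$ reduces to this recurrence, while the very first row $i=1$ produces $z\big[(\lambda-a_1)Q_1-d_1Q_2\big]=zQ_0=z\,p_{T^k_n(\ov a,\ov b,\ov c)}(\lambda)=0$; so here it is the top boundary equation that invokes the annihilator hypothesis.

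I expect no conceptual obstacle: the verification is essentially bookkeeping, and the single genuine subtlety is recognizing that of the $n$ scalar equations, all but one are forced by the defining recurrence of $D$ (a telescoping phenomenon), while the remaining boundary equation — at the bottom in (a), at the top in (b) — is exactly the condition $z\,p_{T^k_n(\ov a,\ov b,\ov c)}(\lambda)=0$. The points requiring care are the index and shift bookkeeping in the products of the $b_j$ (resp.\ $c_j$) and in the shifts of $D$, together with the correct handling of the empty products and the base cases $D(0,k)=1$, $D(-1,k)=0$.
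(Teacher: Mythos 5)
Your proposal is correct and follows essentially the same route as the paper: direct row-by-row verification that the candidate vectors satisfy $(T^k_n(\ov a,\ov b,\ov c)-\lambda I_n)v=0$, using the evaluated recurrences of Lemma \ref{detrecurrences}(1) for part (a) and \ref{detrecurrences}(2) for part (b), with the annihilator hypothesis $z\,p_{T^k_n(\ov a,\ov b,\ov c)}(\lambda)=z\,D^{\ov\lambda-\ov a,\ov d}(n,k)=0$ entering exactly at the one boundary row (bottom in (a), top in (b)). The only difference is cosmetic: you make explicit the identity $p_{T^k_n(\ov a,\ov b,\ov c)}(\lambda)=D^{\ov\lambda-\ov a,\ov d}(n,k)$ via Lemma \ref{reducevariables}, which the paper uses implicitly through Corollary \ref{charpolcor}.
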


\begin{proof}
The equation $T^k_n(\ov a,\ov b,\ov c)v=\lambda v$ for $v=(v_1,\ldots,v_n)\in K^k$ is equivalent to the system of equations \eqnum{b_iv_{i+1}=(\lambda-a_i)v_i-c_{i-1}v_{i-1}, \,\, 1\leq i\leq n\label{eigenproof}}
with $c_0:=0, b_n:=0$. Denote $v^z_i:=v^z_i(\lambda)$. Since by Lemma \ref {detrecurrences}(1)
\[D^{\ov\lambda-\ov a,\ov d}(i,k)=(\lambda-a_i)D^{\ov\lambda-\ov a,\ov d}(i-1,k)-b_{i-1}c_{i-1}D^{\ov\lambda-\ov a,\ov d}(i-2,k)\]
for all $i\in\N^*$, we have
\eq{
b_iv^z_{i+1} &= zb_i\prod_{j=i+1}^{n-1}b_j \cdot D^{\ov\lambda-\ov a,\ov d}(i,k) = \\
& = z\prod_{j=i}^{n-1}b_j \cdot\left((\lambda-a_{i})D^{\ov\lambda-\ov a,\ov d}(i-1,k)-b_{i-1}c_{i-1}D^{\ov\lambda-\ov a,\ov d}(i-2,k)\right) =\\
& = (\lambda-a_i)z\prod_{j=i}^{n-1}b_j \cdot D^{\ov\lambda-\ov a,\ov d}(i-1,k) -c_{i-1}z\prod_{j=i-1}^{n-1}b_j \cdot D^{\ov\lambda-\ov a,\ov d}(i-2,k) = \\ &=(\lambda-a_i)v^z_i - c_{i-1}v^z_{i-1}
}
for $1\leq i\leq n-1$. Moreover, since $\lambda$ is an eigenvalue of $T_n^k(\ov a,\ov b,\ov c)$ and $z\in\Ann(p_{T_n^k(\ov a,\ov b,\ov c)}(\lambda))$, for $i=n$ we get
\eq{
&(\lambda-a_n)v^z_n - c_{n-1}v^z_{n-1} = z((\lambda-a_n)D^{\ov\lambda-\ov a,\ov d}(n-1,k)-b_{n-1}c_{i-1}D^{\ov\lambda-\ov a,\ov d}(n-2,k)) = \\
& = zD^{\ov\lambda-\ov a,\ov d}(n,k) = zp_{T_n^k(\ov a,\ov b,\ov c)}(\lambda)=0.
}
Therefore $v_1^z,\ldots,v_n^z$ comprise a solution of Equations \eqref{eigenproof}, and generate an eigenvector associated to $\lambda$ if the solution is nontrivial.\\
Now denote $w^z_i:=w^z_i(\lambda)$. Similarly, since by Lemma \ref {detrecurrences}(2)
\[D_{i-1}^{\ov\lambda-\ov a,\ov d}(n-i+1,k)=(\lambda-a_{i})D_{i}^{\ov\lambda-\ov a,\ov d}(n-i,k)-b_{i}c_{i}D_{i+1}^{\ov\lambda-\ov a,\ov d}(n-i-1,k)\]
for $1\leq i\leq n$, we have
\eq{
c_{i-1}w^z_{i-1} &= zc_{i-1}\prod_{j=1}^{i-2}c_j \cdot D_{i-1}^{\ov\lambda-\ov a,\ov d}(n-i+1,k) = \\
& = z\prod_{j=1}^{i-1}c_j \cdot\left((\lambda-a_{i})D_i^{\ov\lambda-\ov a,\ov d}(n-i,k)-b_{i}c_{i}D_{i+1}^{\ov\lambda-\ov a,\ov d}(n-i-1,k)\right) =\\
& = (\lambda-a_i)z\prod_{j=1}^{i-1}c_j \cdot D_i^{\ov\lambda-\ov a,\ov d}(n-i,k) -b_{i}z\prod_{j=1}^{i}c_j \cdot D_{i+1}^{\ov\lambda-\ov a,\ov d}(n-i-1,k) = \\ &=(\lambda-a_i)w^z_i - b_{i}w^z_{i+1}
}
for $2\leq i\leq n$. Moreover, since $\lambda$ is an eigenvalue of $T_n^k(\ov a,\ov b,\ov c)$ and $z\in\Ann(p_{T_n^k(\ov a,\ov b,\ov c)}(\lambda))$, for $i=1$ we get
\eq{
&(\lambda-a_1)w^z_1 - b_1w^z_2 = z((\lambda-a_1)D_1^{\ov\lambda-\ov a,\ov d}(n-1,k)-b_1c_1D_2^{\ov\lambda-\ov a,\ov d}(n-2,k)) = \\
& = zD_0^{\ov\lambda-\ov a,\ov d}(n,k) = zD^{\ov\lambda-\ov a,\ov d}(n,k) = zp_{T_n^k(\ov a,\ov b,\ov c)}(\lambda)=0.
}
Therefore $w_1^z,\ldots,w_n^z$ comprise a solution of Equations \eqref{eigenproof}, and generate an eigenvector associated to $\lambda$ if the solution is nontrivial.
\end{proof}

\begin{rem}[\n{Theorem \ref{eigenvectors} generalizes previous results}]\label{eigenvectorsgeneralizespreviousresults}
\mbox{}

The formulas given in Theorem \ref{eigenvectors} for the eigenvectors of a tridiagonal $k$-Toeplitz matrix of order $n$ over any commutative unital ring generalize those for irreducible complex matrices found in \cite[Theorems 2.1 and 2.2]{AlvarezNodarsePetronilhoQuintero} for $n=2,3$, in which the condition $b_i,c_i>0$ for all $1\leq i\leq k$ is imposed. In fact, to get a (nonzero) eigenvector in Theorem \ref{eigenvectors}a) (resp. b)) over $\Co$, or more in general over any integral domain, it is enough that $b_i\neq0$ (resp. $c_i\neq0$) for all $1\leq i\leq k$, for in that case $v_1^1(\lambda)=\prod_{j=1}^{n-1} b_i$ (resp. $w_n^1(\lambda)=\prod_{j=1}^{n-1} c_i$) is nonzero. Over an arbitrary commutative unital ring, to get $v_1^z(\lambda)\neq0$ (resp. $w_n^z(\lambda)\neq0$) it is enough that $b_i$ (resp. $c_i$) is a regular element for all $1\leq i\leq k$.
\end{rem}

If the tridiagonal $k$-Toeplitz matrix is reducible, Theorem \ref{eigenvectors} still applies, but the computations can be simplified.

\newpage
\begin{rem}[\small\n{Eigenvectors of a reducible tridiagonal $k$-Toeplitz matrix}]
\mbox{}

Suppose $b_i=0$ (the reasonings are analogous for block upper triangular matrices). Then $T_n^k(\ov a,\ov b,\ov c)$ is block triangular with associated partition of size $2$ and diagonal matrices, say, $T_1$ of order $i$ and $T_2$ of order $n-i$, which are again tridiagonal $k$-Toeplitz matrices (see Remark \ref{tridiagonalblocktriangular}), and bottom-left block $C:=\pma 0 & \cdots & c_i \\ \vdots & \ddots & \vdots \\ 0 & \cdots & 0\epma$. By Lemma \ref{blockeigenvalues}, an eigenvalue $\lambda$ of $T_n^k(\ov a,\ov b,\ov c)$ is an eigenvalue of $T_1$ or of $T_2$. The equation by blocks for an eigenvector $(v_1,v_2)$, $v_1\in K^i, v_2\in K^{n-i}$, of $T_n^k(\ov a,\ov b,\ov c)$ associated to $\lambda$ is
\eqnum{\pma T_1 & 0\\ C & T_2\epma\pma v_1\\v_2\epma = \lambda\pma v_1\\v_2\epma \Leftrightarrow \left\{\begin{array}{c}T_1v_1 = \lambda v_1 \\ Cv_1 + T_2v_2 = \lambda v_2\end{array}\right\}.\label{eigenequation}}
\enuma
\item Suppose $\lambda$ is an eigenvalue of $T_2$. We see from \eqref{eigenequation} that if $v_2$ is an eigenvector of $T_2$ associated to $\lambda$, then $(0,v_2)$ is an eigenvector of $T_n^k(\ov a,\ov b,\ov c)$ associated to $\lambda$.
\item Suppose now that $\lambda$ is an eigenvalue of $T_1$ which is not an eigenvalue of $T_2$.\footnote{If $\lambda$ is an eigenvalue of both $T_1$ and $T_2$ then item a) applies.} This implies that $\det(\lambda I_{n-i}-T_2)$ is a regular element of $K$, so it is a unit of $Q(K)$ and $\lambda I_{n-i}-T_2$ is invertible over $Q(K)$. Then if $v_1$ is an eigenvector of $T_1$ associated to $\lambda$ and $v_2=(\lambda I_{n-i}-T_2)^{-1}Cv_1$, we see from \eqref{eigenequation} that $(v_1,v_2)$ is an eigenvector of $T_n^k(\ov a,\ov b,\ov c)$ associated to $\lambda$ over $Q(K)$, which generates an eigenvector over $K$ after multiplication by the denominators of all entries of $(\lambda I_{n-i}-T_2)^{-1}$.
\item  Moreover, if $\lambda$ is an eigenvalue of $T_1$ and $c_i$ is a zero divisor, then $(zv_1,0)$ with $v_1$ an eigenvector of $T_1$ associated to $\lambda$ and $0\neq z\in\Ann(c_i)$ is an eigenvector of $T_n^k(\ov a,\ov b,\ov c)$ if $zv_1\neq0$.
\eenum
Associated eigenvectors of $T_1$ or $T_2$ may then be computed as in Theorem \ref{eigenvectors}. Now let $(A_1,\ldots,A_q)$ be the array of diagonal blocks of the finest partition showing $T_n^k(\ov a,\ov b,\ov c)$ as block lower triangular; then these blocks repeat in a specific pattern (see the proof of Theorem \ref{eigenproof}). Different choices of $T_1$ and $T_2$ to cover this array will produce different possibilities with different matrix sizes to which apply Theorem \ref{eigenvectors}. In addition, if $\lambda$ is an eigenvalue of $T_n^k(\ov a,\ov b,\ov c)$ then it is an eigenvalue of some $A_i$ (Lemma \ref{blockeigenvalues}) and of all of its copies; so, in order to apply item b) for $\lambda$, it is necessary that all copies of $A_i$ are considered inside $T_1$. Suppose that $\lambda$ is an eigenvalue of $A_i$ but not of any other $A_j$ except for the copies of $A_i$; then it is possible to compute an eigenvector of $T_n^k(\ov a,\ov b,\ov c)$ associated to $\lambda$ by applying Theorem \ref{eigenvectors} only to $A_i$: Let $A_p$ be the last copy of $A_i$ in $(A_1,\ldots,A_q)$, and consider the partition of $T_n^k(\ov a,\ov b,\ov c)$ of size $2$ with $T_1$ covering $(A_1,\ldots,A_{p-1})$ and $T_2$ covering $(A_p,\ldots,A_q)$; we apply item a) to dispose of $T_1$, whence we need to compute an eigenvector of $T_2$ associated to $\lambda$, for which we apply item b) to $T_2$ with partition $T'_1=A_p$ and $T'_2$ covering $(A_{p+1},\ldots,A_q)$, getting an eigenvector of $A_p$ associated to $\lambda$ by an application of Theorem \ref{eigenvectors}.
\end{rem}

\section{Inverse}\label{sectioninverse}

The formula of the next theorem generalizes those found in \cite{Wittenburg1998} and \cite{FonsecaPetronilho2005} for irreducible matrices over the complex numbers, and coincides with \cite[Item 555]{MuirMetzler1933} generalized to $k$-Toeplitz matrices. To prove it, we write the entries of the inverse of a nonsingular tridiagonal $k$-Toeplitz matrix in terms of determinants of smaller tridiagonal $k$-Toeplitz matrices. This can be done because the submatrices giving rise to its cofactors are block triangular, with diagonal blocks which are either triangular or tridiagonal $k$-Toeplitz matrices.

\begin{thm}[\n{Inverse of a tridiagonal $k$-Toeplitz matrix}]\label{inverse}
\mbox{}

\nin Let $K$ be a commutative unital ring. Fixed $n\in\N$, $k\in\N^*$, $\ov a, \ov b:=(b_1,\ldots,b_k)$, $\ov c:=(c_1,\ldots,c_k)\in K^k$, put $d_i:=b_ic_i$ for $1\leq i\leq k$ and $\ov d:=(d_1,\ldots,d_k)$. If $D^{\ov a,\ov d}(n,k)$ is a unit of $K$ then $T^k_n(\ov a,\ov b,\ov c)$ is invertible and the $(i,j)$ entry of its inverse is given by
\[(-1)^{i+j}\prod_{p=i}^{j-1}b_p\prod_{p=j}^{i-1}c_p\frac{\displaystyle D^{\ov a,\ov d}(\min(i,j)-1,k)D_{\max(i,j)}^{\ov a,\ov d}(n-\max(i,j),k)}{\displaystyle D^{\ov a,\ov d}(n,k)}.\]
\end{thm}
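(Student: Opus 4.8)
The plan is to compute the inverse through the adjugate matrix, which is valid over any commutative unital ring: since $\adj(A)\,A=A\,\adj(A)=\det(A)I_n$, once we know that $\det(T^k_n(\ov a,\ov b,\ov c))=D^{\ov a,\ov d}(n,k)$ is a unit (this determinant identity is Lemma \ref{reducevariables}, and invertibility then follows from \cite[Corollary 2.21]{Brown1993}), the $(i,j)$ entry of the inverse is $\det(A)^{-1}$ times the cofactor $C_{ji}=(-1)^{i+j}M_{ji}$, where $M_{ji}$ is the minor obtained by deleting row $j$ and column $i$ of $T^k_n(\ov a,\ov b,\ov c)$. So the whole problem reduces to evaluating this minor. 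I would first reduce to the case $i\leq j$: since $T^k_n(\ov a,\ov b,\ov c)^T=T^k_n(\ov a,\ov c,\ov b)$ and $(A^{-1})_{ij}=((A^T)^{-1})_{ji}$, the formula for $i>j$ follows from that for $i<j$ after interchanging the roles of $\ov b$ and $\ov c$ (which leaves $\ov d$ unchanged) and swapping the two indices; the claimed formula is manifestly invariant under this operation, as it interchanges the product $\prod_{p=i}^{j-1}b_p$ with $\prod_{p=j}^{i-1}c_p$ and the roles of $\min(i,j)$ and $\max(i,j)$.

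Assume then $i\leq j$ and delete row $j$ and column $i$. I would partition the surviving rows $\{1,\dots,n\}\setminus\{j\}$ into $\{1,\dots,i-1\}$, $\{i,\dots,j-1\}$, $\{j+1,\dots,n\}$ and the surviving columns $\{1,\dots,n\}\setminus\{i\}$ into $\{1,\dots,i-1\}$, $\{i+1,\dots,j\}$, $\{j+1,\dots,n\}$; these have matching cardinalities $i-1$, $j-i$, $n-j$, so the three diagonal blocks are square. Using only that a tridiagonal matrix has nonzero entries within distance one of the diagonal, I would check that the three above-diagonal blocks vanish (each pairs a row index with a strictly larger column index at distance at least two), so the submatrix is block lower triangular; Theorem \ref{blockdeterminant} then gives $M_{ji}$ as the product of the three diagonal-block determinants. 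The top-left block is $T^k_{i-1}(\ov a,\ov b,\ov c)$, with determinant $D^{\ov a,\ov d}(i-1,k)$ by Lemma \ref{reducevariables}; the bottom-right block, by $k$-periodicity, is $T^k_{n-j}(\sigma_j(\ov a),\sigma_j(\ov b),\sigma_j(\ov c))$, with determinant $D^{\sigma_j(\ov a),\sigma_j(\ov d)}(n-j,k)=D_j^{\ov a,\ov d}(n-j,k)$; and the middle block, whose $(s,t)$ entry is the matrix entry in row $i+s-1$ and column $i+t$, has nonzero entries only for $t\leq s$ and diagonal entries $b_i,\dots,b_{j-1}$, hence is itself lower triangular with determinant $\prod_{p=i}^{j-1}b_p$.

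Collecting these,
\[M_{ji}=D^{\ov a,\ov d}(i-1,k)\cdot\Big(\prod_{p=i}^{j-1}b_p\Big)\cdot D_j^{\ov a,\ov d}(n-j,k),\]
so $(A^{-1})_{ij}=(-1)^{i+j}M_{ji}/D^{\ov a,\ov d}(n,k)$ is the stated formula in the case $i\leq j$ (with $\min(i,j)=i$, $\max(i,j)=j$, and the $\ov c$-product empty), and the $i>j$ case follows by the transpose argument. The boundary values are absorbed by the conventions $D(0,k)=1$, $D(-1,k)=0$ and empty products: $i=1$ kills the top-left block, $j=n$ kills the bottom-right block, and $i=j$ kills the middle block, each time leaving the correct empty factor.

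I expect the main obstacle to be the careful, fully rigorous verification of the block structure together with the bookkeeping of the $k$-periodic shift in the bottom-right block, so that its determinant appears as the shifted universal determinant $D_j^{\ov a,\ov d}$ rather than $D^{\ov a,\ov d}$, and the clean handling of all degenerate ranges ($i=1$, $j=n$, $i=j$) so that a single formula covers every case uniformly. The transpose reduction to $i\leq j$ must also be set up symmetrically, so that it does not covertly assume $i<j$.
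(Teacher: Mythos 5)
Your proposal is correct and follows essentially the same route as the paper's proof: inverse via the adjugate, observing that the submatrix of the relevant first minor is block triangular with three diagonal blocks (two tridiagonal $k$-Toeplitz matrices $T^k_{i-1}(\ov a,\ov b,\ov c)$ and $T^k_{n-j}(\sigma_j(\ov a),\sigma_j(\ov b),\sigma_j(\ov c))$, and a triangular middle block with determinant $\prod_{p=i}^{j-1}b_p$), then applying Theorem \ref{blockdeterminant} together with Lemma \ref{reducevariables}. The only cosmetic difference is that you reduce the case $i>j$ to $i\leq j$ by transposition (correctly noting $T^k_n(\ov a,\ov b,\ov c)^T=T^k_n(\ov a,\ov c,\ov b)$ leaves $\ov d$ unchanged), whereas the paper treats the block upper and block lower triangular cases directly and symmetrically.
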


\begin{proof}
Denote $T^k_n:=T^k_n(\ov a,\ov b,\ov c)$ and suppose $\det(T^k_n)=D^{\ov a,\ov d}(n,k)$ is a unit of $K$. We compute its inverse through its adjugate matrix, so that the $(r,s)$ entry of $(T^k_n)^{-1}$ is
\eqnum{\frac{C_{sr}}{\det(T^k_n)},\label{adjugateinv}}
where $C_{sr}:=(-1)^{r+s}\det(A_{sr})$ is the cofactor obtained from the submatrix $A_{sr}$ of $T^k_n$ formed by removing the $s$th row and the $r$th column. Thus if $T^k_n=(t_{ij})_{i,j=1}^n$ and $A_{rs}=(a_{ij})_{i,j=1}^{n-1}$ we have
\[a_{ij}=\left\{\begin{array}{cl}
t_{ij}, &\text{if }i<r, j<s\\
t_{i+1,j}, &\text{if }i\geq r, j<s\\
t_{i,j+1}, &\text{if }i<r, j\geq s\\
t_{i+1,j+1}, &\text{if }i\geq r, j\geq s\\
\end{array}\right..\]
Since $t_{ij}=0$ if $|j-i|\geq2$, this implies that $A_{rs}$ is a block upper triangular matrix when $r\leq s$ and a block lower triangular matrix when $r\geq s$, with three diagonal blocks. When $r\leq s$ we have
\[A_{rs}=\left(\begin{array}{c|ccc|c}
\bm{A_1} & b_{r-1} & & & \,\,\,_{_{r\text{th row}}}\!\!\\\hline
& \bm{c_r} & a_{r+1} & &\\
& & \bm{c_{r+1}} & \ddots & \\
& & & \bm{\ddots} & b_s \\\hline
& & & _{_{s\text{th col}}}\!\! & \bm{A_2}
\end{array}\right)
\]
where $A_1, A_2$ are again tridiagonal $k$-Toeplitz matrices, concretely
$A_1=T^k_{r-1}(\ov a,\ov b,\ov c)$ and $A_2=T^k_{n-s}(\sigma_s(\ov a),\sigma_s(\ov b),\sigma_s(\ov c))$.
Since the middle diagonal block $A_c$ is upper triangular with main diagonal composed from the lower main diagonal of $T^k_n$, its determinant is
\[\det(A_c)=c_rc_{r+1}\cdots c_{s-1}.\]
Analogously, when $r\geq s$ we get $T^k_s(\ov a,\ov b,\ov c)$, $A_b$, and $T^k_{n-r}(\sigma_r(\ov a),\sigma_r(\ov b),\sigma_r(\ov c))$ as the diagonal blocks of the partition of $A_{rs}$, with $A_b$ lower triangular with main diagonal composed from the upper main diagonal of $T^k_n$ and determinant $\det(A_b)=b_s\cdots b_{r-1}$. Thus, since the determinant of a block triangular matrix is the product of the determinants of its diagonal blocks (Theorem \ref{blockdeterminant}), we get
\[\det(A_{rs})=\left\{\begin{array}{cl}
c_r\cdots c_{s-1} \det(T^k_{r-1}(\ov a,\ov b,\ov c)) \det(T^k_{n-s}(\sigma_s(\ov a),\sigma_s(\ov b),\sigma_s(\ov c)), &\text{if }r\leq s\\
b_s\cdots b_{r-1} \det(T^k_{s-1}(\ov a,\ov b,\ov c)) \det(T^k_{n-r}(\sigma_r(\ov a),\sigma_r(\ov b),\sigma_r(\ov c))) &\text{if }s\leq r\\
\end{array}\right..\]
The result now follows from \eqref{adjugateinv}.
\end{proof}

Clearly, if $\det(T_n^k(\ov a,\ov b,\ov c))$ is not a unit of $K$ but is a regular element, then $T_n^k(\ov a,\ov b,\ov c)$ is invertible over $Q(K)$ and Theorem \ref{inverse} still applies.

\section{Algorithms and complexity analysis}\label{complexitysection}
\numberwithin{equation}{subsection}
\renewcommand\labelenumi{\arabic{enumi}.}
\renewcommand\theenumi{\thesubsection.\arabic{enumi}}

Our previous results provide formulas for the determinant, spectral properties, and elements of the inverse of a tridiagonal $k$-Toeplitz matrix. We now conduct a worst-case algebraic complexity analysis on algorithms based on those formulas, showing that they are not only of theoretical value, but can also be used to produce efficient computations. We also show the complexity for general (non-periodic) tridiagonal matrices, and compare the efficiency of our formulas against those of Da Fonseca-Petronilho and of Lewis.

In the following, the complexity functions of the algorithms depend in general on the parameters $n,m,k,r$, where $n=mk+r$ by Euclidean division. In order to compare efficiencies, we consider these parameters ordered as they are listed above (see Section \ref{introcomplexity}), i.e., when comparing efficiencies we give priority to $n$ and $m$ over $k$, considering a priori $k$ as a fixed constant, and to $k$ over $r$.

\subsection{Computation of generalized Fibonacci polynomials}\label{GeneralizedFibonaccicomplexity}
\mbox{}

We study the complexity of computing $U_m(x,y)$ and $U_{m-1}(x,y)$ together, as needed in some of the algorithms. We can compute $U_m(x,y)$ with $x,y\in K$ by iterating the recurrence relation (Lemma \ref{powers}(1))
\[U_m(x,y)=xU_{m-1}(x,y) - yU_{m-2}(x,y)\]
starting from $U_0=0$, $U_1=1$. This computation requires $3(m-1)$ operations in $K$ and includes the value of $U_{m-1}(x,y)$ as a subproduct, so its complexity is
\[\C_{\text{recurrence}}(U_m(x,y),U_{m-1}(x,y))=3(m-1).\]
This computation is more efficient than the evaluation over the definition in \ref{Fibonacci}.\footnote{Let $N:=\lfloor(m-1)/2\rfloor+1$. The evaluation of $U_m(x,y)$ from $U_m(x,y)=\sum_{i=0}^{N-1} (-1)^i\binom{m-1-i}i x^{m-1-2i}y^i$ requires the computation of $N$ powers of $x$, $N$ powers of $y$, $N-1$ binomial coefficients (which can be recurrently computed in $4$ operations each), $2N$ products of them, and $N-1$ sums, a total of $9N-5>3(m-1)$ operations; to this we would need to add the cost of computing $U_{m-1}(x,y)$.}
We can do better if $m$ is big enough: In \cite[Figure 1]{JoyeQuisquater}, the authors present a faster divide-and-conquer algorithm for the computation of the Lucas sequence of the first kind\footnote{The presentation of $U_m(x,y)$ given by this algorithm is not canonical over $K[x,y]$; on the contrary, we get a compact presentation which explains its efficiency. For example, $U_7(x,y)$ is given as $x(x^2 - 2y)(x(x^2 - 2y) - xy) - y^3$.}, based on the binary expansion of $m$ and the well-known index formulas of Lucas sequences (e.g. $U_{2m-1}(x,y)=U_m^2(x,y)-yU_{m-1}^2(x,y)$). This algorithm actually works over any commutative unital ring. We include it here for the reader convenience\footnote{There is a typo in the original code of the algorithm: the line $V_h=V_h-2*Q_h$ should read $V_h=V_h*V_h-2*Q_h$ instead, as at the end of line 5 of our code.}:
\begin{algorithm}[H] 
\caption{Divide-and-conquer computation of $U_m(x,y)$ (\cite[Figure 1]{JoyeQuisquater})}\label{divideandconquer}
\hspace{-165pt}\textbf{Inputs} $m=2^s\sum_{i=s}^{\lfloor\log_2 m\rfloor} m_i2^{i-s}$ ($m_s=1$); $x,y\in K$ \\
\hspace{-130pt}\textbf{Output} $U_m(x,y)$
\begin{algorithmic}[1]
\STATE $U_h=1; V_l=2; V_h=x; Q_l=1; Q_h=1$
\FOR{$i$ \n{from} $\lfloor\log_2 m\rfloor$ \n{to} $s+1$ \n{by} -1}
\STATE $Q_l=Q_l\cdot Q_h$
\IF{$m_i=1$}
\STATE $Q_h=Q_l\cdot y$; $U_h=U_h\cdot V_h$; $V_l=V_h\cdot V_l-x\cdot Q_l$; $V_h=V_h\cdot V_h-2\cdot Q_h$
\ELSE
\STATE $Q_h=Q_l$; $U_h=U_h\cdot V_l-Q_l$; $V_h=V_h\cdot V_l-x\cdot Q_l$; $V_l=V_l\cdot V_l-2\cdot Q_l$
\ENDIF
\ENDFOR
\STATE $Q_l=Q_l\cdot Q_h$;\! $Q_h=Q_l\cdot y$;\! $U_h=U_h\cdot V_l-Q_l$;\! $V_l=V_h\cdot V_l-x\cdot Q_l$;\! $Q_l=Q_l\cdot Q_h$
\FOR{$i$ \n{from} $1$ \n{to} $s$}
\STATE $U_h=U_h\cdot V_l$; $V_l=V_l\cdot V_l-2\cdot Q_l$; $Q_l=Q_l\cdot Q_l$
\ENDFOR
\RETURN $U_h$
\end{algorithmic}
\end{algorithm}
In general, with this strategy we do not get the value of $U_{m-1}(x,y)$ as a subproduct of the computation of $U_m(x,y)$, so we have to run the algorithm twice to compute both. This algorithm contains two different loops: the second one for the ending zeros of the binary expansion of $m$, which contains $5$ elementary operations per cycle, and the first for the rest of digits, with 10 elementary operations per cycle; there is also a small number of operations between cycles. Therefore the worst case is produced when either $m$ is an odd number and $m-1$ is only divisible by $2$ once (i.e., when $m$ is congruent to $3\!\!\mod 4$) or when their roles are reversed ($m$ congruent to $2\!\!\mod 4)$. Both cases are equivalent, so we analyze the first. We need $\lfloor\log_2 m\rfloor$ divisions in $\Z$ to produce the binary expansion of $m$, and then we get the expansion of $m-1$ in one operation in $\Z$. In worst case, $U_m(x,y)$ requires $(9)_K+(1)_\Z$ elementary operations (sums, multiplications, and a parity check) for each of the $\lfloor\log_2 m\rfloor$ steps of the first loop, and $3$ ending operations in $K$. Note that since $m$ is not a power of $2$, $\lfloor \log_2(m-1)\rfloor = \lfloor \log_2 m\rfloor$. The computation of $U_{m-1}(x,y)$ requires $(9)_K+(1)_\Z$ operations in each of the $\lfloor\log_2 m\rfloor-1$ steps of the first loop, $8$ intermediate operations, and $5$ operations in $K$ in a single step of the second loop. Thus the total number of operations is \[\C_{\text{divide-and-conquer}}(U_m(x,y),U_{m-1}(x,y))=(18\lfloor\log_2 m\rfloor+7)_K + (3\lfloor\log_2 m\rfloor)_\Z.\]
The divide-and-conquer algorithm is more efficient than the recurrence one, as its complexity is logarithmic with $m$ instead of linear.

\setcounter{equation}{1}
\begin{rem}[\n{Computing $U_m(x,y)$ through a Chebyshev polynomial}]\label{Chebyshevcomplexity}
\mbox{}

For rings with enough square roots and free of $2$-torsion, the computation of $U_m(x,y)$ can be produced via a Chebyshev polynomial when $y$ is a regular element (see Remark \ref{Chebyshev}). From Formula \eqref{(B)} we get
\[U_m(x,y)= (\sqrt y)^{m-1}U_{m-1}(x/(2\sqrt y))\]
where $U_m(x)$ is as before the $m$th Chebyshev polynomial of the second kind and the computation is done in $Q(K)$. Since the known computing strategies for Chebyshev polynomials are analogous to those of Lucas sequences, the most efficient one being a divide-and-conquer algorithm equivalent to Algorithm \ref{divideandconquer} (see \cite{Koepf}), there seems to be no real gain in switching to Chebyshev polynomials. In addition, with this strategy we also need to compute $(\sqrt d)^{m-1}$, which requires the computation of a square root when $m$ is even.
\end{rem}

\subsection{Computation of continuant polynomials}\label{continuantcomplexity}
\mbox{}

We show that their definitions are not very efficient when computing continuant polynomials of types $\alpha,\beta,\pi$, the recurrence equations of Lemma \ref{recurrences} providing much better results. We work only with polynomials of type $\alpha$; polynomials of type $\beta$ are similar, while $\pi(r,k)=\alpha(r,k)+\beta(r,k)$.
\enum
\item\n{Polynomials of type $\alpha$ from their definition.} From Formula \refeq{alpha},
\[\alpha(r,k)=\sum_{m=0}^{\lfloor r/2\rfloor} \sum_{i\in\binom{[r-1]}{m}_2} p_{r,k}(i).\]
We can show that $\left|\binom{[r-1]}{m}_2\right|=\binom{r-1-m}m$, which is true for $m=0$ by definition, by establishing the bijection $f:\binom{[r-1-m]}m\rightarrow\binom{[r-1]}{m}_2$ given by $f(i_1,\ldots,i_m):=(i_1,i_1+1,\ldots,i_m,i_m+1)$ for $m\in\N^*$. Then, since each monomial $p_{r,k}(i)$ with $i\in\binom{[r-1]}{m}_2$ is the product of $r-m$ variables, we get
    \[\C_{\text{definition}}(\alpha(r,k))=\sum_{m=0}^{\lfloor r/2\rfloor}\binom{r-m-1}m(r-m-1) -1\]
    (although some products could be reused to reduce the complexity\footnote{E.g., in the computation of $\alpha(5,7)$, the product $x_1x_2$ found when computing $x_1\cdots x_5$ could be reused in $x_1x_2y_3x_5$, then $x_1x_2x_3$ reused in $x_1x_2x_3y_4$, etc.}), which is quite large:  $\C_{\text{definition}}(\alpha(r,k))\geq (r-2)^2$ already for $r\geq2$.
\item\n{Polynomials of type $\alpha$ from their recurrence equation.} From Lemma \ref{recurrences}(2),
\[\alpha(r+1,k)=x_{r+1}\alpha(r,k)+y_r\alpha(r-1,k)\]
with $\alpha(0,k)=1$. Hence we can compute $\alpha(r,k)$ recursively from $\alpha(0,k),\ldots$, $\alpha(r-1,k)$, getting
\[\C_{\text{recurrence}}(\alpha(r,k))=3(r-1).\]
\eenum

\subsection{Computation of the determinant}\label{detcomplexity}
\mbox{}

We develop four algorithms that compute the determinant $D^{\ov a,\ov d}(n,k)$ of $T_n^k(\ov a,\ov b,\ov c)$, all arising from Theorem \ref{determinant} and its proof; thus we follow the notation and ideas established in them. In particular we denote $d_i:=b_ic_i$, $A_i:=\pma a_i & -d_{i-1}\\ 1 & 0\epma$ ($d_0:=d_k$) for $1\leq i\leq k$ and $A:=A_k\cdots A_1$, and we write $n=mk+r$ by Euclidean division, assuming $m>0$. After that we compare the algorithms in terms of efficiency and study some variants that are important in the analysis of the subsequent algorithms.

\enum
\item\n{Algorithm D1.}\label{algorithmD1}
Using the recurrence of Formula \eqref{(proof0)}
\[D^{\ov a,\ov d}(n,k)=a_rD^{\ov a,\ov d}(n-1,k)-d_{r-1}D^{\ov a,\ov d}(n-2,k)\]
with $D^{\ov a,\ov d}(0,k)=1$, $D^{\ov a,\ov d}(1,k)=a_1$ given in the proof of Theorem \ref{determinant}, we can compute $D^{\ov a,\ov d}(n,k)$ in at most $3(n-1)+k$ operations, with $k$ operations coming from the multiplications needed to get $d_i=b_ic_i$ for $1\leq i\leq k$. This algorithm, which we call D1, mostly ignores the periodicity of the matrix. D1 has complexity
\[\C_{\text{D1}}(D^{\ov a,\ov d}(n,k))=3n+k-3.\]

\item\n{Algorithm D2.}\label{algorithmD2}
Alternatively we can compute, as in Formula \eqref{(proof2)} from the proof of Theorem \ref{determinant},
\[\pma D^{\ov a,\ov d}(n,k) \\ D^{\ov a,\ov d}(n-1,k)\epma = A_r\cdots A_1 \cdot A^m\pma 1 \\ 0\epma.\]
The $d_i$ require $k$ operations. Due to the special nature of the second row of the $A_i$ we can compute $A$ in $6(k-1)$ operations, getting $A_r\cdots A_1$ as a subproduct.
We could get $A^m$ in $12(m-1)$ operations by naive matrix multiplication, but it is more efficient to use iterative squaring: writing $m=\sum_{i=0}^{L} m_i2^i$ in base $2$ with $L:=\lfloor\log_2 m\rfloor$ (which needs $L$ divisions in $\Z$) we get $A^m$ as $A_L$ in the recurrence equation $A_{i+1}:=(A_i)^2A^{m_{L-i+1}}$ with $A_0:=A$, a computation that requires $24L$ operations in worst case (when $m=2^p-1$ for some $p$). Lastly, we get $(A_r\cdots A_1)\cdot(A^m)$ in $12$ operations. To this process we call algorithm D2. Therefore
\[\C_{\text{D2}}(D^{\ov a,\ov d}(n,k))=(24\lfloor\log_2 m\rfloor+7k+6)_K + (\lfloor\log_2 m\rfloor)_\Z.\]
D2 does not take advantage of Lemma \ref{powers}(2) in the determination of $A^m$.

\item\n{Algorithm D3.}\label{algorithmD3}
Formula \eqref{det1} for the determinant is
\[D^{\ov a,\ov d}(n,k)=U(m)\alpha^{\ov a,\sminus\ov d}(k+r,k) -dU(m-1)\alpha^{\ov a,\sminus\ov d}(r,k)\]
with $U(i)=U_i(\pi^{\ov a,\sminus\ov d}(k,k),d)$. We study how many operations are enough to compute $D^{\ov a,\ov d}(n,k)$ from this formula.
\ite
\item The parameters $d_i$ require $k$ operations.
\item The best way we know to compute $\pi^{\ov a,\sminus\ov d}(k,k)$ is as $\alpha^{\ov a,\sminus\ov d}(k,k)+\beta^{\ov a,\sminus\ov d}(k,k)$, producing the evaluations with the recurrence formulas of Lemma \ref{recurrences}(2,3),
\[\alpha(i+1,k)=x_{i+1}\alpha(i,k)+y_i\alpha(i-1,k), \beta(i+1,k)=x_i\beta(i,k)+y_{i-1}\beta(i-1,k)\]
with $\alpha(0,k)=1$, $\beta(0,k)=0=\beta(1,k)$, $\beta(2,k)=y_k$ (see Section \ref{continuantcomplexity}). This computation needs $3(k-1)+3(k-2)+1$ operations, which include $\alpha^{\ov a,\sminus\ov d}(r,k)$, $\alpha^{\ov a,\sminus\ov d}(k-1,k)$ and $\beta^{\ov a,\sminus\ov d}(r+1,k)$ as subproducts.
\item The element $d$ could be computed as $d=d_1\cdots d_k$ in $k-1$ operations. But $d$ is the determinant of matrix $A$, so
\eqnum{d=\alpha^{\ov a,\sminus\ov d}(k,k)\beta^{\ov a,\sminus\ov d}(k,k)-\alpha^{\ov a,\sminus\ov d}(k-1,k)\beta^{\ov a,\sminus\ov d}(k+1,k);\label{(D)}}
therefore we can compute $d$ with just $6$ operations, $3$ of them to compute $\beta^{\ov a,\sminus\ov d}(k+1,k)$ from $\beta^{\ov a,\sminus\ov d}(k,k)$ and $\beta^{\ov a,\sminus\ov d}(k-1,k)$ with Lemma \ref{recurrences}(3). This option is more efficient when $k>7$, the differences when $k\leq7$ being negligible, so we favour Formula \eqref{(D)} to compute $d$.\\
\n{Remark:} The previous two items are essentially equivalent to computing $A$ and then finding $\pi^{\ov a,\sminus\ov d}(k,k)$ and $d$ respectively as its trace and determinant, getting also $\alpha^{\ov a,\sminus\ov d}(r,k)$, etc. as subproducts, since they appear as entries of the matrices $A_r\cdots A_1$, etc.
\item We find $\alpha^{\ov a,\sminus\ov d}(k+r,k)$ as
\[\alpha^{\ov a,\sminus\ov d}(k+r,k)=\alpha^{\ov a,\sminus\ov d}(k,k)\alpha^{\ov a,\sminus\ov d}(r,k)+\alpha^{\ov a,\sminus\ov d}(k-1,k)\beta^{\ov a,\sminus\ov d}(r+1,k)\]
by Corollary \ref{secondperiod}. Since the evaluations of the continuant polynomials of the first period are already computed, this adds 3 operations.
\item We compute $U(m)$ and $U(m-1)$ from $\pi^{\ov a,\sminus\ov d}(k,k)$ and $d$ by the divide-and-conquer algorithm (\ref{divideandconquer}) in $(18\lfloor\log_2 m\rfloor+7)_K + (3\lfloor\log_2 m\rfloor)_\Z$ operations.
\item Formula \ref{det1} for $D^{\ov a,\ov d}(n,k)$ adds $4$ more operations.
\eite
To the algorithm making calculations as described above we call D3. The grand total number of operations for D3 is
\[\C_{\text{D3}}(D^{\ov a,\ov d}(mk+r,k))=(18\lfloor\log_2m\rfloor+7k+12)_K+(3\lfloor\log_2m\rfloor)_\Z.\]

\item\n{Algorithm D4.}\label{algorithmD4} Formula \eqref{det2} for the determinant is
\[D^{\ov a,\ov d}(n,k)=U(m+1)\alpha^{\ov a,\sminus\ov d}(r,k)+U(m)d_kd_1\cdots d_r\alpha_{r+1}^{\ov a,\sminus\ov d}(k-r-2,k).\]
Algorithm D4 is based on this formula, with subalgorithms similar to those of D3. It requires the following computations:
\ite
\item The $d_i$ parameters require $k$ operations.
\item $\pi^{\ov a,\sminus\ov d}(k,k)$ is computed as before through the recurrence relations of polynomials of types $\alpha$ and $\beta$ (Lemma \ref{recurrences}(2,3)) in $6k-8$ operations, with $\alpha^{\ov a,\sminus\ov d}(r+1,k)$ as a subproduct.
\item $\alpha_{r+1}^{\ov a,\sminus\ov d}(k-r-2,k)$ is computed in $3(k-r-3)$ operations also through Lemma \ref{recurrences}(2).
\item We could compute $d=d_1\cdots d_k$ in $k-1$ operations, carried out so as to get the value of $d':=d_kd_1\ldots d_r$ as a subproduct. But in general it is better to get $d$ through Formula \ref{(D)} in $6$ operations, and $d'$ in $3$ operations as
    \[d'=\det(A_{r+1}\cdots A_1)=\alpha^{\ov a,\sminus\ov d}(r+1,k)\beta^{\ov a,\sminus\ov d}(r+1,k)-\alpha^{\ov a,\sminus\ov d}(r,k)\beta^{\ov a,\sminus\ov d}(r+2,k).\]
\item $U(m+1)$ and $U(m)$ are computed as before (through Algorithm \ref{divideandconquer}), requiring (in worst case) $(18\lfloor\log_2 m\rfloor+7)_K + (3\lfloor\log_2 m\rfloor)_\Z$ operations.
\item Formula \eqref{det2} for $D(n,k)$ adds $4$ more operations.
\eite
Hence we get
\[\C_{\text{D4}}(D^{\ov a,\ov d}(mk+r,k))=(18\lfloor\log_2m\rfloor+7k+12+3(k-r-3))_K+(\lfloor\log_2m\rfloor)_\Z.\]
\eenum

\smallskip

Let us now compare the algorithms by taking computation costs in $K$ and $\Z$ as being equivalent (see Section \ref{introcomplexity}). D1 is less efficient in general, as it is linear in $m$ while D2-D4 are logarithmic. Nevertheless, the simplest D1 is the most efficient and thus potentially interesting when $m=1$, $k$ is big enough to have an impact on computing time, and $r$ is small with respect to $k$, with the savings, with respect to algorithm D3, being approximately of $3(k-r)$ computations.
As seen from the comparison of complexities, algorithm D2 is slightly less efficient than D3, which is also more efficient than D4: despite Formula \eqref{det2} being apparently simpler than Formula \eqref{det1} (once we apply Corollary \ref{secondperiod} to compute $\alpha^{\ov a,\sminus\ov d}(k+r,k)$), since $r<k$, algorithm D4 in fact requires more computations than algorithm D3 in general, more so the greater the difference $k-r$ is (D4 is only marginally more efficient when $r\in\{k-2,k-1\}$, up to $6$ operations). This loss of efficiency of algorithm D4 happens because the favourable dependence of Formula \eqref{det2} on $r$ is overridden by the computation of $\pi^{\ov a,\sminus\ov d}(k,k)$, whose best implementation as far as we know necessitates all polynomials of types $\alpha$ and $\beta$, and also because we need to compute $\alpha_{r+1}^{\ov a,\sminus\ov d}(k-r-2,k)$ apart from the rest of calculations. In conclusion, algorithm D3 is the most efficient one in general.

\begin{rem}[\n{Comparison with da Fonseca and Petronilho's formula}]\label{FonsecaPetronilhocomplexity}
\mbox{}

By Remark \ref{FonsecaPetronilho}, Da Fonseca and Petronilho's formula for the determinant of an irreducible matrix over the complex numbers found in \cite[Section 4]{FonsecaPetronilho2005} is similar to Formula \eqref{det2}, on which algorithm D4 (\ref{algorithmD4}) is based, with the evaluations of polynomials of type $\alpha$ and $\pi$ written as determinants, and the generalized Fibonacci polynomials written as Chebyshev polynomials. By the paragraph above this remark and by Remark \ref{Chebyshevcomplexity}, an algorithm based on this formula can only be more efficient than algorithm D3 (\ref{algorithmD3}) if the determinants involved can be computed more efficiently than we compute the continuant polynomials with the recurrence formulas of Lemma \ref{recurrences}(2,3); if that was the case, then we could alter algorithm D3 to compute the continuant polynomials through said determinants, and find in this way a more efficient algorithm again, by using the generalized Fibonacci polynomials instead of Chebyshev polynomials and Formula \eqref{det1} instead of Formula \eqref{det2}.
\end{rem}

\begin{rem}[\n{Complexity with eigenvalues}]\label{eigenvaluescomplexity}
\mbox{}

If $K$ is a field, the computation of $U(m), U(m-1)$ needed in Formula \eqref{det1} can be produced via the eigenvalues of $A$ through the formulas given in Remarks \ref{eigenvalues}. We study the complexity when we modify algorithm D3 (\ref{algorithmD3}) accordingly:
\enum
\setcounter{enumi}{4}
\item \n{Algorithm D3-eigenvalues.}\label{algorithmD4eigenvalues} Let $K$ be a field, $\ov K$ be an algebraic closure, and $\lambda_1,\lambda_2\in\ov K$ be the eigenvalues of $A$. In worst case we have $\lambda_1\neq\lambda_2$, and so by Formula \eqref{(A1)}
\[U(m)=\frac{\lambda_2^m-\lambda_1^m}{\lambda_2-\lambda_1}.\]
We need to compute $U(m)$ and $U(m-1)$. As in the case of matrices, exponentiation is more efficient through iterative squaring: writing $m=\sum_{i=0}^{L} m_i2^i$ in base $2$ with $L:=\lfloor\log_2 m\rfloor$ (which needs $L$ divisions in $\Z$), for any $\lambda\in\ov K$ we get $\lambda^m$ as $a_L$ in the recurrence equation $a_{i+1}:=(a_i)^2\lambda^{m_{L-i+1}}$ with $a_0:=\lambda$. To compute $U(m)$ and $U(m-1)$, we first compute $\lambda_1^{m-1}, \lambda_2^{m-1}$, then get $\lambda_1^m,\lambda_2^m$ via multiplication by $\lambda_i$ in 2 more operations. Therefore the worst case occurs when $m$ is a power of $2$, and then the computation of $\lambda^m$ requires $(2L)_{\ov K}+(L)_\Z$ operations. Thus we can compute $U(m)$ and $U(m-1)$ in $(4L+7)_{\ov K}+(L)_\Z$ operations. To these calculations we need to add the cost of computing the eigenvalues of $A$: If $\ch(K)\neq2$ we use the quadratic formula applied to the characteristic polynomial and some algorithm $F$ to compute square roots, the cost being $7+\C_F(\sqrt{e})$ operations in $\ov K$, where $e:=\pi^{\ov a,\ov d}(k,k)^2-4d$; if $\ch(K)=2$ we use the $R$ operation (see Remarks \ref{eigenvalues}(1)) and some algorithm $G$ to compute it, the cost being, in the worst case in which $\pi^{\ov a,\ov d}(k,k)\neq0$, of $4+\C_G(R(f))$ operations in $\ov K$ where $f:=d/\pi^{\ov a,\ov d}(k,k)^2$. Therefore $\C_{\text{D3-eigenvalues}}(D^{\ov a,\ov d}(mk+r,k))$ is
\[\left\{\begin{array}{lc}
(4\lfloor\log_2m\rfloor+7k+19+\C_F(\sqrt{e}))_{\ov K}+(\lfloor\log_2m\rfloor)_\Z, &\text{ if }\ch(K)\neq2\\
(4\lfloor\log_2m\rfloor+7k+16+\C_G(R(f))_{\ov K}+(\lfloor\log_2m\rfloor)_\Z, &\text{ if }\ch(K)=2
\end{array}\right..\]
\eenum
\end{rem}

\begin{rem}[\n{Computing several determinants}]\label{severaldets}
\mbox{}

\enum
\setcounter{enumi}{5}
\item \n{Algorithm D3-twice.}\label{algorithmD3twice} In algorithm D3 (\ref{algorithmD3}), to get $D^{\ov a,\ov d}(mk+r,k)$ we need to compute
\[U(m-1),U(m), \alpha^{\ov a,\ov d}(0,k),\ldots,\alpha^{\ov a,\ov d}(k,k), \beta^{\ov a,\ov d}(1,k),\ldots,\beta^{\ov a,\ov d}(k+1,k).\]
Observe that the only values depending on $n$ are the first two; so, after one call to algorithm D3, to compute $D^{\ov a,\ov d}(m'k+r',k)$ (evaluated in the same vectors $\ov a,\ov d$) we only need to compute $U(m'),U(m'-1)$ through algorithm \ref{divideandconquer} (see Remark \ref{eigenvaluescomplexity} if $K$ is a field) and apply Formula \eqref{det1} to get the result. To this procedure we call algorithm D3-twice. Hence
\eq{
&\C_{\text{D3-twice}}(D^{\ov a,\ov d}(mk+r,k),D^{\ov a,\ov d}(m'k+r',k))=\\
&=(18\lfloor\log_2m\rfloor+18\lfloor\log_2m'\rfloor+7k+26)_K+(3\lfloor\log_2m\rfloor+3\lfloor\log_2m'\rfloor)_\Z.
}
\eenum
It is straightforward to generalize the idea to compute the $p$ determinants \\
$D^{\ov a,\ov d}(n_1,k),\ldots,D^{\ov a,\ov d}(n_p,k)$. Moreover, if $n_i$ and $n_j$ have the same quotient $m$ when divided by $k$, then $U(m),U(m-1)$ need to be computed only once; in particular, once we compute $D^{\ov a,\ov d}(mk+r,k)$, any of \[D^{\ov a,\ov d}(mk,k),\ldots, D^{\ov a,\ov d}(mk+k-1,k)\]
can be found with just $6$ more elementary operations.
\enum
\setcounter{enumi}{6}
\item \n{Computing all determinants.}\label{algorithmD1all} On the other hand, the best way to compute all determinants from $D^{\ov a,\ov d}(0,k)$ to $D^{\ov a,\ov d}(n,k)$ (as we need later) is through algorithm D1 (\ref{algorithmD1}), with complexity
\[\C_{\text{D1}}(D^{\ov a,\ov d}(0,k),\ldots,D^{\ov a,\ov d}(n,k))=3n+k-3.\]
\eenum
\end{rem}

\begin{rem}[\n{Computing shifted determinants}]\label{shifteddets}
\mbox{}

Since $D_s^{\ov a,\ov d}(n,k)=D^{\sigma_s(\ov a),\sigma_s(\ov d)}(n,k)$, with algorithm D3 (\ref{algorithmD3}) we can compute a shifted determinant in $(18\lfloor\log_2m\rfloor+7k+12)_K+(3\lfloor\log_2m\rfloor)_\Z$ operations (the circular permutation has no cost, as it just implies a different evaluation).

To compute all shifted determinants $D_s^{\ov a,\ov d}(n-s,k)$ for $0\leq s\leq n$ efficiently (which we need later) we can use the following variant of algorithm D1 (\ref{algorithmD1}):
\enum
\setcounter{enumi}{7}
\item \n{Algorithm D1-shifted.}\label{algorithmD1shifted} Using the recurrence formula for shifted universal determinants (\ref{detrecurrences}(2)) we get
\[D^{\ov a,\ov d}_s(n-s,k)=a_{s+1}D^{\ov a,\ov d}_{s+1}(n-s-1,k)-d_{s+1}D^{\ov a,\ov d}_{s+2}(n-s-2,k)\]
with $D_n^{\ov a,\ov d}(0,k)=1$, $D_{n-1}^{\ov a,\ov d}(1,k)=a_r$ as initial conditions and $s$ ranging from $n-2$ to $0$; so with $k$ operations to get $d_i=b_ic_i$ for $1\leq i\leq k$, we have
\[\C_{\text{D1-shifted}}(D_n^{\ov a,\ov d}(0,k),\ldots,D_0^{\ov a,\ov d}(n,k))=3n+k-3.\]
\eenum
\end{rem}

\begin{rem}[\n{\small Computing the determinant of a general tridiagonal matrix}]\label{complexitygeneraldeterminant}
\mbox{}

Algorithms D1-D4 assume $n>k$. If $n\leq k$ then there is no periodicity in the matrix to be exploited by the algorithms.  In this case we only have to compute $d_1,\ldots,d_n$ and then $D^{\ov a,\ov d}(n,k)$ can be computed by recurrence (Lemma \ref{detrecurrences}(1)) with a total complexity of $4n-3$ operations.
\end{rem}

\subsection{Computation of spectral properties}

\enum
\item\n{Characteristic polynomial.} By Corollary \ref{charpolcor} we can find $p_{T_n^k(\ov a,\ov b,\ov c)}(x)$ with Formula \eqref{det1}, so using algorithm D3 (\ref{algorithmD3}) we get
\[\C_{\text{D3}}(p_{T_{mk+r}^k(\ov a,\ov b,\ov c)}(x))=(18\lfloor\log_2m\rfloor+7k+12)_{K[x]}+(3\lfloor\log_2m\rfloor)_\Z.\]
\n{Remark:} The expression of the characteristic polynomial given by algorithm D3 is not a full expansion in the canonical basis of $K[x]$, due to the shape of Formula \eqref{det1} and to the compact presentation of $U_m(x,y)$ given by the divide-and-conquer algorithm (\ref{divideandconquer}).
\item\n{Eigenvectors.}\label{algorithmEIG} By Theorem \ref{eigenvectors}a), given an eigenvalue $\lambda$ of $T_n^k(\ov a,\ov b,\ov c)$ and an element $z\in\Ann(p_{T_n^k(\ov a,\ov b,\ov c)}(\lambda))$, the vector $v^z(\lambda):=(v^z_1(\lambda),\ldots,v^z_n(\lambda))$, if nonzero, is an eigenvector associated to $\lambda$, with
\eqnum{v^z_i(\lambda):=z\prod_{j=i}^{n-1}b_j \cdot D^{\ov\lambda-\ov a,\ov d}(i-1,k). \label{eigv}}
To compute $v^z(\lambda)$ we need to compute all determinants $D^{\ov\lambda-\ov a,\ov d}(0,k),\ldots,$ $D^{\ov\lambda-\ov a,\ov d}(n-1,k)$, which can be done with algorithm D1 (\ref{algorithmD1all}) in $3n+k-6$ operations. In addition, since we need all the products $\prod_{j=i}^{n-1}b_j$ for $1\leq i\leq n$, the best way to compute them is iteratively (as $1,b_{n-1},b_{n-1}b_{n-2},\ldots$) in $n-2$ operations. Calling this procedure algorithm EIG, including all multiplications coming from \ref{eigv}, we get
\[\C_{\text{EIG}}(v^z(\lambda))=6n+k-10.\]
Note that the only advantage that this procedure takes on the matrix being $k$-Toeplitz is the use of $d_1,\ldots,d_k$ instead of $d_1,\ldots,d_n$.

Analogously, the (potential) eigenvector $w^z(\lambda):=(w^z_1(\lambda),\ldots,w^z_n(\lambda))$ with
\[w^z_i(\lambda):=z\prod_{j=1}^{i-1}c_j \cdot D_i^{\ov\lambda-\ov a,\ov d}(n-i,k)\]
(Theorem \ref{eigenvectors}b)) can be found with the same cost by using algorithm D1-shifted (\ref{algorithmD1shifted}) for computing the shifted determinants $D_1^{\ov\lambda-\ov a,\ov d}(n-1,k),\ldots,$ $D_n^{\ov\lambda-\ov a,\ov d}(0,k)$.
\eenum

\begin{rem}[\n{\small Computing the spectral properties of a tridiagonal matrix}]\label{complexitygeneralspectralproperties}
\mbox{}

If $n\leq k$ then there is no periodicity in the matrix that can be exploited by the algorithms. We show the cost of computing the spectral properties of a general tridiagonal matrix through our previous results.
\enum
\setcounter{enumi}{2}
\item\n{Characteristic polynomial.} In this case, the characteristic polynomial is computed through Corollary \ref{charpolcor} as the determinant of a general tridiagonal matrix, so it can be found by recurrence in $4n-3$ operations (see Remark \ref{complexitygeneraldeterminant}).
\item\n{Eigenvectors.} The only part of algorithm EIG in which the periodicity is exploited is the computation of all determinants $D^{\ov\lambda-\ov a,\ov d}(0,k),\ldots,$ $D^{\ov\lambda-\ov a,\ov d}(n-1,k)$, which in this case can be done in $4n-6$ operations by recurrence (Lemma \ref{detrecurrences}(1)). Therefore the modified algorithm, GENEIG, has complexity
    \[\C_{\text{GENEIG}}(v^z(\lambda))=7n-10.\]
\eenum
\end{rem}

\subsection{Computation of the inverse}
\mbox{}

If $D^{\ov a,\ov d}(n,k)$ is a unit of $K$ then $T^k_n(\ov a,\ov b,\ov c)$ is invertible and by Theorem \ref{inverse} the $(i,j)$ entry $a_{ij}=a_{ij}^{(n,k)}$ of its inverse is given by
\eqnum{a_{ij}^{(n,k)}=(-1)^{i+j}\prod_{p=i}^{j-1}b_p\prod_{p=j}^{i-1}c_p\frac{\displaystyle D^{\ov a,\ov d}(\min(i,j)-1,k)D_{\max(i,j)}^{\ov a,\ov d}(n-\max(i,j),k)}{\displaystyle D^{\ov a,\ov d}(n,k)}.\label{invformula}}
We study the complexity of computing, respectively, one entry and all entries of the inverse through Formula \eqref{invformula}. Write $n=mk+r$ by Euclidean division.
\enum
\item \n{An entry of the inverse.}\label{algorithmENTRY} We find the cost of computing one entry of the inverse of $T^k_n(\ov a,\ov b,\ov c)$ with Formula \eqref{invformula}. Suppose without loss of generality that $i\leq j$ (so $\min(i,j)=i,\max(i,j)=j$ and $\prod_{p=j}^{i-1}c_p=1$), and call $m_x,r_x\in\N$, respectively, to the quotient and remainder resulting when dividing $x\in\N$ by $k$ (note that $m_n=m$).
\ite
\item We compute $D^{\ov a,\ov d}(n,k)$ and $D^{\ov a,\ov d}(i-1,k)$ with algorithm D3-twice (\ref{algorithmD3twice}).
\item We compute $D_j^{\ov a,\ov d}(n-j,k)$ with another call to algorithm D3 (see Remark \ref{shifteddets}), taking into account that $d_1,\ldots,d_k$ and $d$ have already been computed.
\item We write $\prod_{p=i}^{j-1}b_p=(b_1\cdots b_k)^{m_{j-i}}b$, where $b$ includes the last $r_{j-i}$ factors of the product. We compute $b_1\cdots b_k$ in $k-1$ operations, in the right order ($b_i,b_ib_{i+1},\ldots$) so as to get $b$ as a subproduct; then we compute $B:=(b_1\cdots b_k)^{m_{j-i}}$ by iterative squaring (see \ref{algorithmD4eigenvalues}) in $(2\lfloor\log_2 m_{j-i}\rfloor)_K+(\lfloor\log_2 m_{j-i}\rfloor)_\Z$ operations; lastly we compute $B\cdot b$ in one operation.
\item Formula \eqref{invformula} adds other $4$ operations (when $i+j$ is odd).
\eite
This procedure we call algorithm ENTRY. Calling $p:=\min(i,j), q:=\max(i,j)$, the number of elementary of elementary operations needed to compute $a_{ij}^{(n,k)}$ through ENTRY is $(18(\lfloor\log_2m_n\rfloor+\lfloor\log_2m_{p-1}\rfloor+\lfloor\log_2m_{n-q}\rfloor)+2\lfloor\log_2 m_{q-p}\rfloor+14k+32)_K+
3(\lfloor\log_2m_n\rfloor+\lfloor\log_2m_{p-1}\rfloor+\lfloor\log_2m_{n-q}\rfloor+\lfloor\log_2 m_{q-p}\rfloor)_\Z.$
Therefore, in worst case ($p=n, q=1$ and $m_{n-1}=m_n=m$),
\[\C_{\text{ENTRY}}(a_{ij}^{(mk+r,k)}) = (56\lfloor\log_2m\rfloor+14k+32)_K+(12\lfloor\log_2m\rfloor)_\Z.\]
\item \n{All entries of the inverse.}\label{algorithmINV} We find now the cost of computing all entries of the inverse of $T^k_n(\ov a,\ov b,\ov c)$ through Formula \eqref{invformula}.
\ite
\item We compute $D^{\ov a,\ov d}(0,k),\ldots,D^{\ov a,\ov d}(n,k)$ with algorithm D1 (\ref{algorithmD1all}).
\item We compute $D_n^{\ov a,\ov d}(0,k),\ldots,D_1^{\ov a,\ov d}(n-1,k)$ with algorithm D1-shifted (\ref{algorithmD1shifted}), taking into account that $d_1,\ldots,d_k$ have already been computed.
\item To compute all products $\prod_{p=i}^{j-1}b_p$ for $1\leq i,j\leq n$, we proceed as follows: In one batch we compute $b_1, b_1b_2, \ldots, b_1\cdots b_k=:b$ and then cyclically $b_2, b_2b_3,\ldots, b_2\cdots b_k$, \, $b_3,b_3b_4, \ldots, b_3\cdots b_kb_1$, etc. up to \\
    $b_{k-1}, b_{k-1}b_k, \ldots, b_{k-1}b_kb_1b_2\cdots b_{k-3}$, in $(k-1)^2$ operations. In a second batch we compute $b, b^2, \ldots, b^m$ in $m-1$ operations. Then we multiply all elements of the first batch with all elements of the second batch in $(m-1)(k-1)^2$ operations\footnote{For example, if $k=5$ then the product for the element $a_{4,22}$ is $b_4b_5b_1\cdots b_5b_1 = b_4b_5b^3b_1 = (b_4b_5b_1)b^3$.}. Analogously we compute all products $\prod_{p=i}^{j-1}c_p$ for $1\leq i,j\leq n$.
\item Since the two determinants appearing in the numerator of Formula \eqref{invformula} depend only on $\min(i,j)$ and $\max(i,j)$, their product is the same for entries $a_{ij}$ and $a_{ji}$. Moreover, if $i\leq j$ then $\prod_{p=j}^{i-1}c_p=1$, while if $j\leq i$ then $\prod_{p=i}^{j-1}b_p=1$. Therefore we can compute $a_{11},\ldots,a_{nn}$ from Formula \eqref{invformula} with another $5(n+1)n/2$ products.
\eite
To this procedure we call algorithm INV. Then
\[\C_{\text{INV}}(a_{11}^{(mk+r,k)},\ldots,a_{nn}^{(mk+r,k)})=5n^2/2+2k^2m+17n/2-4mk+4m+k-8.\]
Note that $mk\approx n$, so
\[\C_{\text{INV}}(a_{11}^{(mk+r,k)},\ldots,a_{nn}^{(mk+r,k)})\approx 5n^2/2+(2k+9/2)n+4m+k-8.\]
\eenum

\begin{rem}[\n{Complexity of Lewis' formula}]\label{Lewiscomplexity}
\mbox{}

In \cite[Theorem 1]{Lewis1982}, Lewis provides the following formula for an entry of the inverse of an irreducible tridiagonal matrix $T=(t_{ij})$ over a field (we write it only for the case $i<j$):
\eqnum{a_{ij}= \left(\prod_{p=i}^{j-1}e_p\right)\nu_i\mu_jD,\label{Lewisformula}}
where $e_p:=t_{p,p+1}/t_{p+1,p}$, $\nu_i, \mu_j$ are respectively computed through the recurrence equations
\[\nu_i:=-h_{i-1}\nu_{i-1}-\frac1{g_{i-1}}\nu_{i-2}, \,\, \mu_i:=-f_{i+1}\mu_{i+1}-g_{i+1}\mu_{i+2},\]
with $v_1:=1, v_2:=-h_1$, $\mu_n:=1, \mu_{n-1}:=-f_n$, $f_i:=t_{ii}/t_{i,i+1}, g_i:=t_{i,i+1}/t_{i,i-1}$, $h_i:=t_{ii}/t_{i,i+1}$, and $D:=(t_{11}\mu_1+t_{12}\mu_2)^{-1}$.

If we want to compute the element $a_{ij}$ with an algorithm inspired by Lewis' formula, then we need to compute the recurrence of the $\nu$'s up to the $i$th term ($3(i-2)$ operations) and the recurrence of the $\mu$'s down to $\mu_1$ ($3n$ operations), since $D$ depends on $\mu_1,\mu_2$ (we need other $4$ operations to compute $D$). If the tridiagonal matrix is $k$-Toeplitz, for the recurrences we need to compute the corresponding divisions for $f_1,\ldots,f_k$, $g_1,\ldots,g_k$ and $h_1,\ldots,h_{i-1}$ ($2k+\min(i-1,k)$ operations). If $i\neq j-1$ then we also need to compute the product
$\prod_{t=i}^{j-1}e_t$, which needs $j-i$ divisions to get the $e$'s and can be produced in $(k+2\lfloor\log_2 m_{j-i}\rfloor)_K+(\lfloor\log_2 m_{j-i}\rfloor)_{\Z}$ operations, as in algorithm ENTRY (\ref{algorithmENTRY}), if the tridiagonal matrix is $k$-Toeplitz. So, the number of elementary operations needed to compute $a_{ij}^{(n,k)}$ through this procedure, which we call Lewis, is $(3n+3k+\min(k,i-1)+2i+j+2\lfloor\log_2 m_{j-i}\rfloor-2)_K+(\lfloor\log_2 m_{j-i}\rfloor)_{\Z}$ when $i\neq j-1$; in worst case, which is $i=n-1, j=n$, we get
\[\C_{\text{Lewis}}(a_{ij}^{(n,k)})=6n+3k-3,\]
which is less efficient than our algorithm ENTRY, which is linear only in $k$ (not in $n$) and logarithmic with $m$.
\end{rem}

\begin{rem}[\n{Computing the inverse of a general tridiagonal matrix}]\label{complexitygeneralinverse}
\mbox{}

If $n\leq k$ then there is no periodicity in the tridiagonal matrix that can be exploited by the algorithms.  We show the cost of algorithms based on Formula \eqref{invformula} for computing one element and all elements of a general tridiagonal matrix:
\enum
\setcounter{enumi}{2}
\item\n{Algorithm GENENTRY.} When computing the element $a_{ij}^{(n)}$ (suppose $i\leq j$), we can compute the determinants in $7n-3j-6$ operations by recurrence (as in Remark \ref{complexitygeneraldeterminant}), by computing $d_1,\ldots,d_n$ once and getting the determinant of size $i-1$ as a subproduct of the computation of the determinant of size $n$. In addition, the product $\prod_{p=i}^{j-1} b_p$ now requires $j-i$ operations. Hence the number of elementary operations needed to compute $a_{ij}$ through GENENTRY is $7n-2j-i-2$. Therefore, in worst case $(i=1=j)$ we get
    \[\C_{\text{GENENTRY}}(a_{ij}^{(n)})=7n-5.\]
    We briefly compare this result with an algorithm based on Lewis' formula: Looking at Remark \ref{Lewiscomplexity} we see that, if the tridiagonal matrix is not $k$-Toeplitz, the only change required in Lewis algorithm in worst case is the need of computing $f_1,\ldots,f_n$, $g_1,\ldots,g_n$ and $h_1,\ldots,h_{n-2}$, which requires $3n-2$ operations, giving in the end
    \[\C_{\text{Lewis}}(a_{ij}^{(n)})=9n-5.\]
    This shows that algorithm GENENTRY is more efficient than Lewis' for general tridiagonal matrices (moreover, it works for any matrix over any commutative ring).
\item\n{Algorithm GENINV.} When computing all elements of the inverse, we compute $d_1,\ldots,d_n$ once ($n$ operations) and then by recurrence (Lemma \ref{detrecurrences}) we can compute the sequences of all determinants $D^{\ov a,\ov d}(n,k),\ldots,D^{\ov a,\ov d}(n,k)$ and all shifted determinants $D_n^{\ov a,\ov d}(0,k),\ldots,D_1^{\ov a,\ov d}(n-1,k)$ in $3n-3$ operations each. To get all products $\prod_{p=i}^{j-1}b_p$ for all $1\leq i\leq j\leq n$, we compute the sequences $b_1,b_1b_2,\ldots,b_1\cdots b_{n-1}$, then $b_2,b_2b_3,\ldots,b_2\cdots b_{n-1}$, etc., up to $b_{n-2}b_{n-1}$ for a total of $\frac12(n-1)(n-2)$ operations, and analogously we get all products $\prod_{p=j}^{i-1}c_p$ for all $1\leq j\leq i\leq n$. As in the last step of algorithm INV (\ref{algorithmINV}), we can compute $a_{11},\ldots,a_{nn}$ from Formula \ref{invformula} with another $\frac52(n+1)n$ products. Therefore
    \[\C_{\text{GENINV}}(a_{11}^{(n)},\ldots,a_{nn}^{(n)})=\frac72n^2+\frac{13}2n-3.\]
\eenum
\end{rem}

\section{Examples}\label{sectionexamples}
\numberwithin{equation}{section}

Informally speaking, the theorems and algorithms we have developed in the previous sections produce the universal tridiagonal $k$-Toeplitz example, which is free in $K,n,k,\ov a,\ov b,\ov c$, and can then be evaluated to any specific example over any commutative unital ring. Now we construct two examples which are more specific: one free in $K,\ov a,\ov b,\ov c$ but with $n,k$ fixed, and a completely specific one over $\Z/60\Z$. In these examples, in the computations that we make of the determinant, spectral properties and an entry of the inverse, we follow the algorithms of Section \ref{complexitysection} as close as the writing allows without undermining the exposition. For ease of presentation we choose $k=3$ and $n=19$, although the gain in efficiency of some subalgorithms is apparent only for greater values of $k,n$.

\begin{exam}[\n{Universal example}]\label{universalexample}
Let $R$ be a commutative unital ring and consider $K:=R[a_1,a_2,a_3,b_1,b_2,b_3,c_1,c_2,c_3]$, $k:=3, n:=19$ and $T^3_{19}(\ov a,\ov b,\ov c)$ over $K$ with $\ov a=(a_1,a_2,a_3), \ov b=(b_1,b_2,b_3), \ov c=(c_1,c_2,c_3)$. Put $d_i:=b_ic_i$ for $1\leq i\leq 3$ and $\ov d:=(d_1,d_2,d_3)$. We have $n=mk+r$ with $m=6$, $r=1$.
To compute the determinant we carry out the following computations from algorithm D3 (\ref{algorithmD3}):
First we compute the continuant polynomials through their recurrence relations:
\eq{
&\alpha^{\ov a,\sminus\ov d}(0,3)=1, \, \alpha^{\ov a,\sminus\ov d}(1,3)=a_1, \, \alpha^{\ov a,\sminus\ov d}(2,3)=a_2a_1-d_1,\\
&\alpha^{\ov a,\sminus\ov d}(3,3)=a_3(a_2a_1-d_1)-d_2a_1,\\
&\beta^{\ov a,\sminus\ov d}(1,3)=0, \, \beta^{\ov a,\sminus\ov d}(2,3)=-d_3, \, \beta^{\ov a,\sminus\ov d}(3,3)=a_2(-d_3),\\
&\beta^{\ov a,\sminus\ov d}(4,3)=a_3a_2(-d_3)+d_2d_3,\\
&\pi^{\ov a,\sminus\ov d}(3,3)=\alpha^{\ov a,\sminus\ov d}(3,3)+\beta^{\ov a,\sminus\ov d}(3,3)=a_3(a_2a_1-d_1)-d_2a_1+a_2(-d_3),\\
&d=\alpha^{\ov a,\sminus\ov d}(3,3)\beta^{\ov a,\sminus\ov d}(3,3)-\alpha^{\ov a,\sminus\ov d}(2,3)\beta^{\ov a,\sminus\ov d}(4,3) = d_1d_2d_3.
}
Then, through the divide-and-conquer algorithm (\ref{divideandconquer}) we find
\[U_6(x,y)=((x^2-2y)x-xy)(x^2-y), U_5(x,y)=(x^2-y)(x^2-2y)-y^2\]
evaluated in $x=\pi^{\ov a,\sminus\ov d}(3,3)$, $y=d$ to get $U(6), U(5)$ respectively.
Lastly we compute $\alpha^{\ov a,\sminus\ov d}(4,3)=\alpha^{\ov a,\sminus\ov d}(3,3)\alpha^{\ov a,\sminus\ov d}(1,3)+\alpha^{\ov a,\sminus\ov d}(2,3)\beta^{\ov a,\sminus\ov d}(2,3)$.
Then
\[\det(T^3_{19}((\ov a,\ov b,\ov c)) = U(6)\alpha^{\ov a,\sminus\ov d}(4,3) -dU(5)\alpha^{\ov a,\sminus\ov d}(1,3).\]
Similarly, to find the characteristic polynomial we compute
\eq{
&\alpha^{\ov x-\ov a,\sminus\ov d}(0,3)=1, \, \alpha^{\ov x-\ov a,\sminus\ov d}(1,3)=x-a_1, \, \alpha^{\ov x-\ov a,\sminus\ov d}(2,3)=(x-a_2)(x-a_1)-d_1,\\
&\alpha^{\ov x-\ov a,\sminus\ov d}(3,3)=(x-a_3)((x-a_2)(x-a_1)-d_1)-d_2(x-a_1),\\
&\beta^{\ov x-\ov a,\sminus\ov d}(1,3)=0, \, \beta^{\ov x-\ov a,\sminus\ov d}(2,3)=-d_3, \, \beta^{\ov x-\ov a,\sminus\ov d}(3,3)=-(x-a_2)d_3,\\
&\beta^{\ov x-\ov a,\sminus\ov d}(4,3)=-(x-a_3)(x-a_2)d_3+d_2d_3,\\
&\pi^{\ov x-\ov a,\sminus\ov d}(3,3)=(x-a_3)((x-a_2)(x-a_1)-d_1)-d_2(x-a_1)-(x-a_2)d_3,
}
we evaluate $U_6(x,y), U_5(x,y)$ in $x=\pi^{\ov x-\ov a,\sminus\ov d}(3,3), y=d$ to respectively get $U^x(6),U^x(5)$, and we compute\\
$\alpha^{\ov x-\ov a,\sminus\ov d}(4,3)=\alpha^{\ov x-\ov a,\sminus\ov d}(3,3)\alpha^{\ov x-\ov a,\sminus\ov d}(1,3)+\alpha^{\ov x-\ov a,\sminus\ov d}(2,3)\beta^{\ov x-\ov a,\sminus\ov d}(2,3)$ to get
\[p_{T^3_{19}(\ov a,\ov b,\ov c)}(x)=U^x(6)\alpha^{\ov x-\ov a,\sminus\ov d}(4,3) -dU^x(5)\alpha^{\ov x-\ov a,\sminus\ov d}(1,3).\]
If $\lambda$ is an eigenvalue of $T^3_{19}(\ov a,\ov b,\ov c)$ then we find a potential eigenvector associated to $\lambda$ with algorithm EIG (\ref{algorithmEIG}): We compute the following determinants by their recurrence relation:
\eq{
&D^{\ov\lambda -\ov a,\ov d}(0,k)=1, \, D^{\ov\lambda -\ov a,\ov d}(0,k)=\lambda-a_1, \, D^{\ov\lambda -\ov a,\ov d}(2,k)=(\lambda-a_2)(\lambda-a_1)-d_1\\
&D^{\ov\lambda -\ov a,\ov d}(3,k)=(\lambda-a_3)((\lambda-a_2)(\lambda-a_1)-d_1)-d_2(\lambda-a_1)\\
&\vdots\\
&D^{\ov\lambda -\ov a,\ov d}(19,k)=(\lambda-a_1)D^{\ov\lambda -\ov a}(18,k)-d_3D^{\ov\lambda -\ov a}(17,k).
}
We consider all products $b_{18}=b_3, b_{18}b_{17}=b_3b_2,\ldots, b_{18}\cdots b_1=(b_3b_2b_1)^6$. We fix an element $0\neq z\in\Ann(p_{T^3_{19}(\ov a,\ov b,\ov c)}(\lambda))$.  Then, if nonzero, an eigenvector associated to $\lambda$ is
\[v^z(\lambda):=z\cdot((b_3b_2b_1)^6D^{\ov\lambda -\ov a,\ov d}(0,k),(b_3b_2)^6b_1^5D^{\ov\lambda -\ov a,\ov d}(1,k),\ldots,D^{\ov\lambda -\ov a,\ov d}(18,k)).\]
Now we compute entry $a_{5,11}$ of the inverse by applying algorithm ENTRY (\ref{algorithmENTRY}): We find $D^{\ov a,\ov d}(4,3)=U(1)\alpha^{\ov a,\sminus\ov d}(4,3)-dU(0)\alpha^{\ov a,\sminus\ov d}(1,3)=\alpha^{\ov a,\sminus\ov d}(4,3)$. With a new iteration of algorithm D1 we find $D_{11}^{\ov a,\ov d}(8,3)=D^{\sigma_2(\ov a),\sigma_2(\ov d)}(8,3)=D_2^{\ov a,\ov d}(8,3)$: we need to compute the new shifted continuant polynomials
\[\alpha_2^{\ov a,\sminus\ov d}(1,3),\ldots,\alpha_2^{\ov a,\sminus\ov d}(3,3),\beta_2^{\ov a,\sminus\ov d}(1,3),\ldots,\beta_2^{\ov a,\sminus\ov d}(3,3)\]
to get $\alpha_2^{\ov a,\sminus\ov d}(5,3)$, $\pi_2^{\ov a,\sminus\ov d}(3,3)$ and $U(2)_2=U_2(\pi_2^{\ov a,\sminus\ov d}(3,3),d)$; then
\[D_{11}^{\ov a,\ov d}(8,3)=U(2)_2\alpha_2^{\ov a,\sminus\ov d}(5,3)-d\alpha_2^{\ov a,\sminus\ov d}(2,3).\]
We compute $b_5\cdots b_{11}=(b_1b_2b_3)^2b_2$. Then
\[a_{5,11}=(b_1b_2b_3)^2b_2\frac{D^{\ov a,\ov d}(4,3)D_{11}^{\ov a,\ov d}(8,3)}{\det(T^3_{19}((\ov a,\ov b,\ov c)))}.\]
\end{exam}

\newpage
\begin{exam}[\n{Example over $\Z/60\Z$}] We particularize Example \ref{universalexample} with $K=R:=\Z/60\Z$, $\ov a:=(1,2,3)$, $\ov b:=(1,-1,1)$, $\ov c:=(12,7,1)\in K^3$. We just need to evaluate the corresponding values of $a_1,\ldots,c_3$ in the previous computations.\\
\n{Determinant:}
\eq{
&d_1=b_1c_1=12, d_2=-7, d_3=1,\\
&\alpha^{\ov a,\sminus\ov d}(0,3)=1, \, \alpha^{\ov a,\sminus\ov d}(1,3)=1, \, \alpha^{\ov a,\sminus\ov d}(2,3)=2\cdot1-12=-10,\\
&\alpha^{\ov a,\sminus\ov d}(3,3)=3(-10)+7\cdot 1 = -23,\\
&\beta^{\ov a,\sminus\ov d}(1,3)=0, \, \beta^{\ov a,\sminus\ov d}(2,3)=-1, \, \beta^{\ov a,\sminus\ov d}(3,3)=2(-1)=-2,\\
&\beta^{\ov a,\sminus\ov d}(4,3)=3(-2)+(-7)1 = -13,\\
&\alpha^{\ov a,\sminus\ov d}(4,3)=(-23)1+(-10)(-1) = -13,\\
&\pi^{\ov a,\sminus\ov d}(3,3)=-23-2=-25, \, d=(-23)(-2)-(-10)(-13) = -24,\\
&U(6)=(((-25)^2-2(-24))(-25)-(-25)(-24))((-25)^2-(-24)) = -25,\\
&U(5)=((-25)^2-(-24))((-25)^2-2(-24))-(-24)^2 = 1.
}
\[\det(T^3_{19}((\ov a,\ov b,\ov c)) = (-25)(-13) -(-24)1\cdot1 = -11.\]
\n{Characteristic polynomial:}
\eq{
&\alpha^{\ov x-\ov a,\sminus\ov d}(0,3)=1, \, \alpha^{\ov x-\ov a,\sminus\ov d}(1,3)=x-1, \, \alpha^{\ov x-\ov a,\sminus\ov d}(2,3)=(x-2)\cdot(x-1)-12,\\
&\alpha^{\ov x-\ov a,\sminus\ov d}(3,3)=(x-3)((x-2)(x-1)-12)+7(x-1),\\
&\beta^{\ov x-\ov a,\sminus\ov d}(1,3)=0, \, \beta^{\ov x-\ov a,\sminus\ov d}(2,3)=-1, \, \beta^{\ov x-\ov a,\sminus\ov d}(3,3)=(x-2)(-1)=-(x-2),\\
&\beta^{\ov x-\ov a,\sminus\ov d}(4,3)=-(x-3)(x-2)+(-7)1 =-(x-3)(x-2)-7,\\
&\alpha^{\ov x-\ov a,\sminus\ov d}(4,3)=((x-3)((x-2)(x-1)-12)+7(x-1))(x-1)-((x-2)(x-1)-12),\\
&\pi^{\ov x-\ov a,\sminus\ov d}(3,3)=(x-3)((x-2)(x-1)-12)+7(x-1)-(x-2),\\
&U^x(6)=(((\pi^{\ov x-\ov a,\sminus\ov d}(3,3))^2-2(-24))(\pi^{\ov x-\ov a,\sminus\ov d}(3,3))-\\
&\,\,\,\,\,\,\,\,\,\,\,\,\,\,\,\, -(\pi^{\ov x-\ov a,\sminus\ov d}(3,3))(-24))((\pi^{\ov x-\ov a,\sminus\ov d}(3,3))^2-(-24)),\\
&U^x(5)=((\pi^{\ov x-\ov a,\sminus\ov d}(3,3))^2-(-24))((\pi^{\ov x-\ov a,\sminus\ov d}(3,3))^2-2(-24))-(-24)^2.
}
\eq{
&p_{T^3_{19}(\ov a,\ov b,\ov c)}(x)=U^x(6)\alpha^{\ov x-\ov a,\sminus\ov d}(4,3)+24U^x(5)(x-1) =\\
=&x^{19} + 23x^{18} + 6x^{17} + 57x^{15} + 39x^{14} + 37x^{13} + 29x^{12} + 15x^{11} + 53x^{10} +  \\
+&52x^9 +54x^8 + 22x^7 + 50x^6 + 3x^5 + 49x^4 + 41x^3 + 39x^2 + 19x + 11.
}
We have written the characteristic polynomial in expanded form for ease of reading, but the compact form actually computed by the formulas is more efficient for evaluation.
Observe that $\lambda:=1$ is a zero of $p_{T^3_{19}(\ov a,\ov b,\ov c)}$, so it is an eigenvalue of $T^3_{19}(\ov a,\ov b,\ov c)$. We compute an associated eigenvector:
\eq{
&D^{\ov\lambda -\ov a,\ov d}(0,k)=1, \, D^{\ov\lambda -\ov a,\ov d}(0,k)=0, \, D^{\ov\lambda -\ov a,\ov d}(2,k)=(-1)0-12=-12,\\
&D^{\ov\lambda -\ov a,\ov d}(3,k)=(-2)(-12)-(-7)0 = 24, \, D^{\ov\lambda -\ov a,\ov d}(4,k)=12, \, D^{\ov\lambda -\ov a,\ov d}(5,k)=0, \\
&D^{\ov\lambda -\ov a,\ov d}(6,k)=24, \, D^{\ov\lambda -\ov a,\ov d}(7,k)=0, \, D^{\ov\lambda -\ov a,\ov d}(8,k)=12, \, D^{\ov\lambda -\ov a,\ov d}(9,k)=-24,\\ &D^{\ov\lambda -\ov a,\ov d}(10,k)=-12, \, D^{\ov\lambda -\ov a,\ov d}(11,k)=0, \, D^{\ov\lambda -\ov a,\ov d}(12,k)=-24, \, D^{\ov\lambda -\ov a,\ov d}(13,k)=0,\\
&D^{\ov\lambda -\ov a,\ov d}(14,k)=-12, \, D^{\ov\lambda -\ov a,\ov d}(15,k)=24, \, D^{\ov\lambda -\ov a,\ov d}(16,k)=12, \, D^{\ov\lambda -\ov a,\ov d}(17,k)=0,\\
&D^{\ov\lambda -\ov a,\ov d}(18,k)=24.
}
Since $b_1=1, b_2=-1, b_3=1$, the products $\prod_{j=i}^{n-1}b_j$ for $1\leq i\leq n$ are easy to compute in this case, being $1,1,\sminus1,\sminus1,\sminus1,1,1,1,\sminus1,\sminus1,\sminus1,1,1,1,\sminus1,\sminus1,\sminus1,1,1$. Since $\Ann(0)=K$ we can pick $z:=1$. Thus
\[v^1(1)=(1, 0, 12, -24, -12, 0, 24, 0, -12, 24, 12, 0, -24, 0, 12, -24, -12, 0, 24).\]
Interestingly, $p_{T^3_{19}(\ov a,\ov b,\ov c)}(\lambda)$ is a zero divisor of $K$ for many other values of $\lambda$,\footnote{The only elements of $K$ which are not eigenvalues of $T^3_{19}(\ov a,\ov b,\ov c)$ are $0,2,8,12,14,18,20,24,30,32,38,42,44,48,50,54$.} but all the associated eigenvalues found by Theorem \ref{eigenvectors} are multiples of $v^1(1)$.\\
\smallskip
\pagebreak

\n{Entry $a_{5,11}$ of the inverse:} We use that $f_2^{\ov a,\ov d}=f(\sigma_2(\ov a),\sigma_2(\ov d))$ for any $f\in K[x_1,\ldots,y_k]$:
\eq{
&(b_1b_2b_3)^2b_2 = -1, \, D^{\ov a,\ov d}(4,3)=\alpha^{\ov a,\ov d}(4,3) = -13, \, \det(T^3_{19}((\ov a,\ov b,\ov c)) = -11,\\
&\ov a':=\sigma_2(\ov a)=(3,1,2), \, \ov d':=\sigma_2(\ov d)=(1,12,-7), \\
&\alpha^{\ov a',\sminus\ov d'}(1,3)=3, \, \alpha^{\ov a',\sminus\ov d'}(2,3)=1\cdot3-1=2, \, \alpha^{\ov a',\sminus\ov d'}(3,3)=2\cdot2-12\cdot 3 = 28,\\
&\beta^{\ov a',\sminus\ov d'}(1,3)=0, \, \beta^{\ov a',\sminus\ov d'}(2,3)=7, \, \beta^{\ov a',\sminus\ov d'}(3,3)=1\cdot7=7,\\
&\alpha^{\ov a',\sminus\ov d'}(4,3)=28\cdot3+2\cdot7 = -22, \, \pi^{\ov a',\sminus\ov d'}(3,3)=28+7=-25, \, U(2)_2 = -25,\\
&D_{11}^{\ov a,\ov d}(8,3)=(-25)(-22)-(-24)3 = 22.
}
\[a_{5,11}=(-1)\frac{(-13)22}{-11} = -26.\]
\end{exam}

\bigskip
\bigskip
\bigskip

\bibliographystyle{abbrv}
\bibliography{mybibfile}

\end{document}